\providecommand{\U}[1]{\protect\rule{.1in}{.1in}}
\theoremstyle{plain}
\newtheorem{corollary}{Corollary}
\newtheorem{lemma}{Lemma}
\newtheorem{remark}{Remark}
\newtheorem{theorem}{Theorem}
\numberwithin{equation}{section}
\newcommand{\disp}{\displaystyle}
\DeclareMathOperator{\osc}{osc}
\newcommand{\eps}{\varepsilon}
\newcommand{\vp}{\varphi}
\newcommand{\al}{\alpha}
\newcommand{\be}{\beta}
\newcommand{\te}{\theta}
\newcommand{\la}{\lambda}
\newcommand{\om}{\omega}
\newcommand{\Om}{\Omega}
\newcommand{\si}{\sigma}
\newcommand{\iny}{\infty}
\newcommand{\del}{ \partial}
\newcommand{\su}{\subset}
\newcommand{\LP}{\Delta}
\newcommand{\gr}{\nabla}
\newcommand{\norm}[1]{\left\vert\left\vert #1\right\vert\right\vert}
\newcommand{\abs}[1]{\left\vert#1\right\vert}
\newcommand{\set}[1]{\left\{#1\right\}}
\newcommand{\brac}[1]{\left[#1\right]}
\newcommand{\pr}[1]{\left( #1 \right) }
\newcommand{\pb}[1]{\left( #1 \right] }
\newcommand{\bp}[1]{\left[ #1 \right) }
\newcommand{\N}{\ensuremath{\mathbb{N}}}
\newcommand{\R}{\ensuremath{\mathbb{R}}}
\newcommand{\C}{\ensuremath{\mathbb{C}}}
\begin{document}
\title[Quantitative uniqueness ]
{Quantitative uniqueness of solutions to second order elliptic equations with singular lower order terms}
\author{ Blair Davey and Jiuyi Zhu}
\address{
Department of Mathematics\\
The City College of New York\\
New York, NY 10031, USA\\
Email: bdavey@ccny.cuny.edu }
\address{
Department of Mathematics\\
Louisiana State University\\
Baton Rouge, LA 70803, USA\\
Email:  zhu@math.lsu.edu }
\thanks{\noindent{Davey is supported in part by the Simons Foundation Grant number 430198.
Zhu is supported in part by \indent NSF grant DMS-1656845}}
\date{}
\subjclass[2010]{35J15, 35J10, 35A02.}
\keywords {Carleman estimates, unique continuation,
 singular lower order terms, vanishing order}

\begin{abstract}
In this article, we study some quantitative unique continuation properties of solutions to second order elliptic equations with singular lower order terms.
First, we quantify the strong unique continuation property by estimating the maximal vanishing order of solutions.
That is, when $u$ is a non-trivial solution to $\LP u + W \cdot \gr u + V u = 0$ in some open, connected subset of $\R^n$, where $n \ge 3$, we characterize the vanishing order of solutions in terms of the norms of $V$ and $W$ in their respective Lebesgue spaces.
Then, using these maximal order of vanishing estimates, we establish quantitative unique continuation at infinity results for solutions to $\LP u + W \cdot \gr u + V u = 0$ in $\R^n$.
The main tools in our work are new versions of $L^p\to L^q$ Carleman estimates for a
range of $p$- and $q$-values.
\end{abstract}

\maketitle
\section{Introduction}

In this paper, we investigate some quantitative unique continuation properties, or simply the quantitative uniqueness, of solutions to second order elliptic equations with singular lower order terms.
A partial differential operator $P$ defined in $\Om \su \R^n$ is said to have the {\em strong unique continuation property} in the function space $S$ if whenever $u \in S$ is a solution to $P u = 0$ in $\Om$, and $u$ vanishes to infinite order at some point $x_0 \in \Om$, then, necessarily, $u \equiv 0$ throughout $\Om$.
If $P$ has the strong unique continuation property, then it is interesting to determine the fastest rate at which a solution can vanish without being trivial.
We call this rate the {\em maximal order of vanishing}.

We assume throughout that $n \ge 3$.
In a forthcoming paper, we will consider $n=2$ dimensions.
We use the notation $B_{r}\pr{x_0} \su \R^n$ to denote the ball of radius $r$ centered at $x_0$. When the center is understood from the context, we simply write $B_{r}$.

Suppose that for some $K, M \ge 1$, $\norm{W}_{L^\iny\pr{B_{10}}} \le K$ and $\norm{V}_{L^\iny\pr{B_{10}}} \le M$.
If $u : B_{10} \to \C$ is a solution to
\begin{equation}
\LP u + W \cdot \gr u + V u = 0
\label{ePDE}
\end{equation}
in $B_{10}$ with $\norm{u}_{L^\iny\pr{B_1}} \ge 1$ and $\norm{u}_{L^\iny\pr{B_{10}}} \le \hat C$, then a quantitative form of strong unique continuation asserts that
\begin{equation}
\norm{u}_{L^\iny\pr{B_r}} \ge c r^{C\pr{K^2 + M^{2/3}}} \quad \text{
as } r \to 0, \label{OofV}
\end{equation}
which implies that the maximal order of vanishing for $u$ at origin
is less than $C\pr{K^2 + M^{2/3}}$. When $W \equiv 0$ (taking $K =
0$), this maximal order of vanishing estimate was proved by Bourgain
and Kenig in \cite{BK05}. They used this result to establish
estimates at infinity that were relevant to their work on Anderson
localization. Meshkov's examples in \cite{Mes92} imply that the
power of $2/3$ is optimal for complex-valued functions. In
\cite{Dav14}, the first author generalized the work of Bourgain and
Kenig in the presence of a first order term, $W$, by proving an
order of vanishing estimate as in \eqref{OofV} and a quantitative
unique continuation at infinity theorem. The latter theorem takes
the following form: Assume that $\norm{W}_{L^\iny\pr{\R^n}} \le A_1$
and $\norm{V}_{L^\iny\pr{\R^n}} \le A_0$. If $u : \R^n \to \C$ is a
solution to \eqref{ePDE} in $\R^n$ with $\norm{u}_{L^\iny\pr{\R^n}}
\le C_0$ and $\abs{u\pr{0}} \ge 1$, then for all $R$ sufficiently
large,
\begin{equation}
\mathcal{M}\pr{R} := \inf_{\abs{x_0} = R} \norm{u}_{L^\iny\pr{B_1\pr{x_0}}} \ge \exp\pr{-C R^2 \log R}.
\label{UCiny}
\end{equation}
By adapting the constructions of Meshkov from \cite{Mes92}, she showed that the power of $2$ is best possible in the complex-valued setting.
Lin and Wang \cite{LW14} generalized the unique continuation results from \cite{Dav14} to variable coefficient elliptic operators.

Within this article, we study the quantitative uniqueness of solutions to elliptic equations with singular lower order terms by generalizing the results described above in \eqref{OofV} and \eqref{UCiny} to the setting where $W \in L^s$ and $V \in L^t$ for some $s, t \le \iny$.
That is, assuming that for some $K, M \ge 1$, $\norm{W}_{L^s\pr{B_{10}}} \le K$ and $\norm{V}_{L^t\pr{B_{10}}} \le M$, let $u : B_{10} \to \C$ be a bounded, normalized solution to \eqref{ePDE} in $B_{10}$.
We show that
\begin{equation}
\norm{u}_{L^\iny\pr{B_r}} \ge c r^{C\pr{K^\kappa + M^{\mu}}} \quad \text{ as } r \to 0,
\label{OofVst}
\end{equation}
where $\kappa$ and $\mu$ depend on $s$, $t$, and $n$.
Then, using the maximal order of vanishing estimates, we employ a scaling technique to prove unique continuation at infinity theorems.
Specifically, we show that if $\norm{W}_{L^s\pr{\R^n}} \le A_1$, $\norm{V}_{L^t\pr{\R^n}} \le A_0$, and $u : \R^n \to \C$ is a bounded, normalized solution to \eqref{ePDE} in $\R^n$, then for all $R$ sufficiently large,
\begin{equation}
\mathcal{M}\pr{R}  \ge \exp\pr{-C R^\Pi \log R},
\label{UCinyst}
\end{equation}
where $\Pi$ depends on $s$, $t$, and $n$.
The precise statements of our theorems are given in the next section.

We recall some of the vast literature regarding strong unique continuation for elliptic equations with lower order terms.
Jerison and Kenig \cite{JK85} proved that the strong unique continuation property holds for operators of the form $\LP+ V$ provided that $V \in L^{n/2}_{loc}\pr{\R^n}$ for $n \ge 3$.
For operators of the form $\LP + W \cdot \gr$, Jerison in \cite{Jer86} and then Kim proved in \cite{Kim89} that strong unique continuation holds whenever $W\in L^{s}$ with $s=\frac{3n-2}{2}$, $n \ge 3$.
Further reductions of $s$ are due to Wolff in \cite{Wol90} and Regbaoui in \cite{Reg99}.
For general elliptic operators of the form $\LP + W \cdot \gr + V$, if the lower order terms satisfy $V\in L^{n/2}$ and $W\in L^{s}$ with $s>n,$ then the strong unique continuation property holds, see. e.g. \cite{KT01}.
Therefore, there is a large class of elliptic operators for which we can study quantitative uniqueness.

Vanishing order plays an important role in the study of nodal sets of eigenfunctions in geometry.
Let $\mathbb{M}$ denote a smooth, compact Riemannian manifold.
For the classical eigenfunctions of $\mathbb{M}$, those $\phi_\la$ for which
 $$-\LP_{\mathbb{M}} \phi_\lambda=\lambda \phi_\lambda \quad \quad \mbox{in} \ \mathbb{M},$$
Donnelly and Fefferman in \cite{DF88}, \cite{DF90} showed that the maximal vanishing order of $\phi_\lambda$ on $\mathbb{M}$ is everywhere less than $C\sqrt{\lambda}$, where $C$ depends only on the manifold $\mathbb{M}$.
The sharpness of this estimate is established by spherical harmonics on the sphere.
In \cite{Kuk98}, Kukavica studied the vanishing order of solutions to the Schr\"{o}dinger equation
\begin{equation}
- \LP_{\mathbb{M}} u = V u. \label{schro}
\end{equation}
Kukavica showed that if $V \in W^{1, \infty}$, then the upper bound
for the vanishing order is less than
$C\pr{1+\norm{V_-}_{L^\iny\pr{\mathbb{M}}}^{1/2} +
\osc_{\mathbb{M}}V +
\norm{\gr_{\mathbb{M}}u}_{L^\iny\pr{\mathbb{M}}}}$. Using different
methods, Bakri \cite{Bak12} and Zhu \cite{Zhu16} (when $\mathbb{M}$
is Euclidean) independently proved that the optimal vanishing order
of solutions to (\ref{schro}) is less than $\disp
C\pr{1+\norm{V}_{L^\iny\pr{\mathbb{M}}}^{1/2} +
\norm{\gr_{\mathbb{M}}V}_{L^\iny\pr{\mathbb{M}}}^{1/2}}$. The
optimality of this result can be observed if $V(x)$ is an eigenvalue
and $u(x)$ is an eigenfunction on a sphere.

In \cite{KT16}, Klein and Tsang studied quantitative unique continuation properties of (real-valued) solutions to $\LP u + V u = 0$, where $V \in L^t + L^\iny$ for some $t \ge n \ge 3$.
They used an $L^2$ Carleman estimate (similar to those that appeared in \cite{BK05}, \cite{Ken07}, and \cite{Dav14}) in combination with Sobolev embedding to derive lower bounds for solutions on small balls.
The results in \cite{KT16} imply that if $\norm{V}_{L^t} \le M$, then $$\norm{u}_{L^\iny\pr{B_r}} \ge c r^{C M^{\frac{2t}{3t-2n}}} \quad \text{ as } r \to 0.$$
It appears that the methods in \cite{KT16} do not apply when there is a singular first order term, i.e. $W \in L^s$ for some $s < \iny$.
In the present paper, through the application of more sophisticated Carleman estimates, we work with singular first and zeroth order terms (both $W$ and $V$), and we can treat $V \in L^t$ for some $t < n$.
Moreover, our bounds are smaller than those that appear in \cite{KT16}, so they may be considered stronger.

A closely related problem was studied by Kenig and Wang in \cite{KW15} where they proved vanishing order estimates for solutions to $\LP u + W \cdot \gr u = 0$ in the plane under the assumption that $W: \R^2 \to \R^2$ belongs to $L^s\pr{\R^2}$ for some $s \in \bp{2, \iny}$.
They also derived unique continuation at infinity theorems with the usual scaling technique.
The proofs in \cite{KW15} build on the complex analytic tools that were developed in \cite{KSW15}, and are therefore only suited to real-valued solutions in the plane, a setting that is very different from ours.

Finally, we point out that in \cite{MV12}, Malinnikova and Vessella studied a different quantitative uniqueness problem for elliptic operators with singular lower order terms.
They derived estimates for the norms of solutions on arbitrary compact subsets of the domain from information about the smallness of solutions on subsets of positive measure.

To prove our vanishing order estimates, Carleman estimates are used to derive three-ball inequalities.
Then the ``propagation of smallness" argument is used to obtain maximal order of vanishing
estimates.
In much of the literature discussed above, $L^2 \to L^2$ Carleman estimates were used to prove maximal order of vanishing estimates.
In this paper, we establish new $L^p \to L^q$ Carleman inequalities with $\frac{2n}{n+2}<p \leq 2 \le q \le \frac{2n}{n-2}$.
These Carleman estimates are quantitative in the sense that we show the dependence on $\tau$, the constant that may be made arbitrarily large.
The ranges of $p$ and $q$, in combination with the H\"older's inequality, allow us to consider equations of the form \eqref{ePDE}, where $W  \in L^{s}$ and $V \in L^{t}$ for a large range of $s, t < \iny$.

To verify our Carleman estimates, we decompose the Laplacian into
first order operators and prove a collection of Carleman estimates
for these operators. The $L^2 \to L^2$ Carleman estimates are proved
using the standard integration by parts approach. For the $L^p \to
L^2$ estimates, we use the eigenfunction estimates of Sogge
\cite{Sog86} along with the techniques developed in \cite{Jer86},
\cite{BKRS88} and \cite{Reg99}. By combining Carleman estimates for
the first order constituents of $\LP$, applying a Sobolev
inequality, and interpolating, we arrive at the general Carleman
estimate given in Theorem \ref{Carlpq}.

Once we have the general Carleman estimates, the order of vanishing results are proved in much the same way as in \cite{BK05} and \cite{Ken07}, for example.
The unique continuation at infinity theorems follow from the maximal order of vanishing estimates through the scaling argument presented in \cite{BK05}.

The outline of the paper is as follows.
In Section \ref{Results}, we present the precise statements of our theorems.
Section \ref{CarlEst} is devoted to obtaining Carleman estimates for the second order elliptic operators with singular lower order terms.
In Section \ref{CarlProofs}, the major $L^p\to L^q$ Carleman estimates for the Laplacian are established and we derive a quantitative Caccioppoli inequality.
In section \ref{vanOrd}, we deduce three-ball inequalities from the Carleman estimates.
Then, the vanishing order is obtained via the propagation of smallness argument.
The scaling argument is presented in Section \ref{QuantUC} where we prove the quantitative unique continuation at infinity theorems.
The letters $c$ and $C$ denote generic positive constants that do not depends on $u$, and may vary from line to line.

\section{Statements of Results}
\label{Results}

Now we present the precise statements of our theorems.
Our theorems come in pairs; the first theorem in the pair is an order of vanishing result as in \eqref{OofVst}, and the second theorem is a unique continuation at infinity estimate like \eqref{UCinyst}.
There are three pairs of theorems corresponding to the cases where $V, W \not\equiv 0$, $V \equiv 0$, and $W \equiv 0$.

Before stating the theorems, we clarify the meaning of {\em solution}.
For some $s > n$ and $t > \frac n 2$, assume that $W \in L^s\pr{B_R}$ and $V \in L^t\pr{B_R}$.
Suppose $u$ is a non-trivial solution to
\begin{equation}
\LP u+ W(x)\cdot \nabla u+V(x) u=0.
\label{goal}
\end{equation}
A priori, we assume that $u \in W^{1,2}_{loc}\pr{B_R}$ is a weak solution to \eqref{goal} in $B_R$.
However, the computations within Section \ref{CarlEst} imply that there exists a $p \in \pb{\frac{2n}{n+2}, 2}$, depending on $s$ and $t$, such that $W \cdot \nabla u+V u \in L^p_{loc}\pr{B_R}$.
By regularity theory, it follows that $u \in W_{loc}^{2,p}\pr{B_R}$ and therefore $u$ is a solution to \eqref{goal} almost everywhere in $B_R$.
Moreover, by de Giorgi-Nash-Moser theory, we have that $u \in L^\iny_{loc}\pr{B_R}$.
Therefore, when we say that $u$ is a solution to \eqref{goal} in $B_R$, it is understood that $u$ belongs to $L^\iny_{loc}\pr{B_R} \cap W^{1,2}_{loc}\pr{B_R} \cap W^{2,p}_{loc}\pr{B_R}$ and $u$ satisfies equation \eqref{goal} almost everywhere in $B_R$.

We state the first order of vanishing result.

\begin{theorem}
Let $s \in \pb{\frac{3n-2}{2}, \iny}$ and $t \in \pb{ n\pr{\frac{3n-2}{5n-2}}, \iny}$.
Assume that for some $K, M \ge 1$, $\norm{W}_{L^s\pr{B_{10}}} \le K$ and $\norm{V}_{L^t\pr{B_{10}}} \le M$.
Let $u : B_{10} \to \C$ be a solution to \eqref{goal} in $B_{10}$.
Assume that $u$ is bounded and normalized in the sense that
\begin{align}
& \|u\|_{L^\infty(B_{6})}\leq \hat{C},
\label{bound} \\
& \|u\|_{L^\infty(B_{1})}\geq 1.
\label{normal}
\end{align}
Then the maximal order of vanishing for $u$ in $B_{1}$ is less than
$ C_1 K^\kappa + C_2 M^\mu$. That is, for any $x_0\in B_1$,
\begin{align*}
\|u\|_{L^\iny(B_{r}(x_0))} &\ge c r^{\pr{ C_1 K^\kappa + C_2 M^\mu}}  \quad \text{ as } r \to 0,
\end{align*}
where
$\disp \kappa = \left\{\begin{array}{ll}
\frac{4s}{2s - \pr{3n-2}} & t > \frac{sn}{s+n} \medskip \\
\frac{4t}{\pr{5 - \frac 2 n}t - \pr{3n-2}} & n\pr{\frac{3n-2}{5n-2}}
< t \le \frac{sn}{s+n}
\end{array}\right.$,
$\disp \mu = \left\{\begin{array}{ll}
\frac{4s}{6s - \pr{3n-2}} & t \ge s \medskip \\
\frac{4 s t}{6 s t + \pr{n+2}t -4ns} & \frac{sn}{s+n} < t < s \medskip \\
\frac{4t}{\pr{5 - \frac 2 n}t - \pr{3n-2}} & n\pr{\frac{3n-2}{5n-2}}
< t \le \frac{sn}{s+n}
\end{array}\right.$,
$c = c\pr{n, s, t, \hat C}$, $C_1 = C_1\pr{n, s, t}$, and $C_2 = C_2\pr{n, s, t}$. \label{thh}
\end{theorem}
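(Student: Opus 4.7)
The plan is to execute the classical Carleman-estimate plus propagation-of-smallness strategy, but with the $L^p\to L^q$ Carleman estimate of Theorem \ref{Carlpq} in place of the usual $L^2 \to L^2$ estimate, so that H\"older's inequality can absorb the singular lower order terms.

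Fix $x_0 \in B_1$ and, after translation, assume $x_0 = 0$. Pick a radial Carleman weight $\phi$ with a logarithmic singularity at the origin, and a cutoff $\eta$ equal to $1$ on a spherical shell and supported in an annulus $\set{r_0 \le \abs{x} \le R}$ with $r_0$ small and $R\le 6$. Theorem \ref{Carlpq} applied to $\eta u$ yields an estimate of the schematic form
\[
\tau^{\alpha_q} \norm{e^{\tau\phi}\eta u}_{L^q} + \tau^{\alpha_2}\norm{e^{\tau\phi}\gr(\eta u)}_{L^2} \le C\norm{e^{\tau\phi}\LP(\eta u)}_{L^p},
\]
with exponents $\alpha_q,\alpha_2$ determined by $p,q,n$. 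Expanding $\LP(\eta u)=\eta\LP u + 2\gr\eta\cdot\gr u + (\LP\eta) u$ and using \eqref{goal} to substitute $\eta\LP u = -\eta W\cdot\gr u - \eta V u$, H\"older's inequality produces
\[
\norm{e^{\tau\phi}\eta W\cdot\gr u}_{L^p}\le \norm{W}_{L^s}\norm{e^{\tau\phi}\eta\gr u}_{L^{sp/(s-p)}}, \quad \norm{e^{\tau\phi}\eta Vu}_{L^p}\le \norm{V}_{L^t}\norm{e^{\tau\phi}\eta u}_{L^{tp/(t-p)}}.
\]

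The next step is to select $(p,q)$ so that the two H\"older outputs match the left-hand side: typically $sp/(s-p)=2$ (i.e.\ $p=2s/(s+2)$) and $tp/(t-p)\le q$. For $\tau$ so large that $\norm{W}_{L^s}\tau^{-\alpha_2}\le \tfrac{1}{2}$ and $\norm{V}_{L^t}\tau^{-\alpha_q}\le \tfrac{1}{2}$, both lower-order contributions are absorbed into the left-hand side. The positivity requirements $\alpha_2,\alpha_q>0$ translate, via the formulas for $\alpha_2(p,n)$ and $\alpha_q(q,n)$, into the hypotheses $s>(3n-2)/2$ and $t>n(3n-2)/(5n-2)$, and give the admissible threshold $\tau \ge C\pr{K^{\kappa}+M^{\mu}}$ with $\kappa=1/\alpha_2$ and $\mu=1/\alpha_q$. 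When $t>sn/(s+n)$ the choice $p=2s/(s+2)$ driven by $W$ already accommodates $V$; when $t\le sn/(s+n)$, $V$ forces a smaller $p$, producing the second branch of $\kappa$ and the third branch of $\mu$. In the intermediate range $sn/(s+n)<t<s$ both constraints bind and optimization over admissible $(p,q)$ gives the composite $\mu=4st/(6st+(n+2)t-4ns)$.

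After absorption, only the commutator terms $2\gr\eta\cdot\gr u$ and $(\LP\eta)u$, supported in two thin annular shells near $\abs{x}=r_0$ and $\abs{x}=R$, remain on the right. Estimating $e^{\tau\phi}$ on each shell, converting the $L^2$-gradient factor via the quantitative Caccioppoli inequality of Section \ref{CarlProofs}, and using the de Giorgi–Nash–Moser $L^q\to L^\iny$ bound converts the Carleman estimate into a three-sphere inequality
\[
\norm{u}_{L^\iny(B_{r_2})} \le C\,\norm{u}_{L^\iny(B_{r_1})}^{\theta}\,\norm{u}_{L^\iny(B_{r_3})}^{1-\theta}
\]
for some radii $0<r_1<r_2<r_3\le 6$ and $\theta\in(0,1)$. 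The proof concludes with standard propagation of smallness: iterate this inequality along a chain of overlapping balls connecting $B_r(x_0)$ to a ball where \eqref{normal} yields $\norm{u}_{L^\iny}\ge 1$, using \eqref{bound} to control the outermost factor. Telescoping gives $\norm{u}_{L^\iny(B_r(x_0))}\ge c\, r^{C\tau}$, and substituting the minimal admissible $\tau=C_1K^\kappa+C_2M^\mu$ produces the claim.

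The main obstacle is the bookkeeping behind the case split: one must verify, for each $(s,t)$ regime, that the $(p,q)$ pair producing the smallest admissible $\tau$ actually lies in the admissible range of Theorem \ref{Carlpq} and that the corresponding powers $\alpha_2,\alpha_q$ yield exactly the stated $\kappa$ and $\mu$. The intermediate sub-case $sn/(s+n)<t<s$ of $\mu$, where neither $W$ nor $V$ alone dictates $(p,q)$, is the most delicate: a genuine optimization over the admissible $(p,q)$-square is required to minimize $\tau$, and it is precisely this optimization that generates the composite exponent $4st/(6st+(n+2)t-4ns)$.
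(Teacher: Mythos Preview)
Your proposal is correct and follows essentially the same route as the paper: the H\"older absorption of $W$ and $V$ with the case split on $(p,q)$ is precisely Theorem~\ref{CarlpqVW}, and the resulting three-ball inequality plus propagation of smallness is Lemma~\ref{threeBalls} followed by the argument in Section~\ref{vanOrd}. One small correction: the Carleman weight $\phi(r)=\log r+\log(\log r)^2$ forces the cutoff to be supported in a ball of \emph{small} radius $R_0<1$ (not $R\le 6$), so the paper runs the chain argument at a fixed small scale to obtain a quantitative lower bound $\ell$, and then applies the three-ball inequality once more with the inner radius $r_0\to 0$, rather than chaining directly from $B_r$ to the unit ball as your last paragraph suggests.
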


\begin{remark}
We have that if $s = \iny$, then $\disp \lim_{t \to \iny} \mu = \frac 2 3$.
And if $t > 2$, then $\disp \lim_{s \to \iny} \kappa = 2$.
Therefore, this theorem, in a sense, recovers the results described in \eqref{OofV}.
\end{remark}
\begin{remark}
Because of the dependence on both $W$ and $V$ in the vanishing order of solutions, the powers $\kappa$ and $\mu$ in Theorem \ref{thh} depend on both $t$ and $s$, and therefore the ranges of $t$ and $s$ are correlated.
\end{remark}
\begin{remark}
If we compare Theorem \ref{thh} in the case $s=\infty$ to Theorem \ref{thhh} below, we see that the power $\mu$ is smaller in Theorem \ref{thhh}.
\end{remark}

As in \cite{BK05}, a scaling argument shows that the following unique continuation at infinity estimate follows from Theorem \ref{thh}.
Each unique continuation at infinity theorem is presented in terms of a lower bound for $\mathcal{M}\pr{R}$, where
\begin{equation}
\mathcal{M}\pr{R} := \inf\set{\norm{u}_{L^\iny\pr{B_1\pr{x_0}}} : \abs{x_0} = R}.
\label{MRDef}
\end{equation}
(Compare with the estimate given in \eqref{UCiny}.)

\begin{theorem}
Assume that for some $s \in \pb{\frac{3n-2}{2}, \iny}$ and $t \in \pb{ n\pr{\frac{3n-2}{5n-2}}, \iny}$, $\norm{W}_{L^s\pr{\R^n}} \le A_1$ and $\norm{V}_{L^t\pr{\R^n}} \le A_0$.
Let $u : \R^n \to \C$ be a solution to \eqref{goal} in $\R^n$.
Assume that $\norm{u}_{L^\iny\pr{\R^n}} \le C_0$ and $\abs{u\pr{0}} \ge 1$.
Then for $R >> 1$,
\begin{equation*}
\mathcal{M}\pr{R} \ge \exp\pr{-C R^\Pi \log R},
\end{equation*}
where
$\disp
\Pi = \left\{\begin{array}{ll}
\frac{4\pr{s-n}}{2s - \pr{3n-2}} & t > \frac{sn}{s+n} \medskip \\
\frac{4 \pr{t-n\frac t s}}{\pr{5 - \frac 2 n}s - \pr{3n-2} \frac s t} & n\pr{\frac{3n-2}{5n-2}} < t \le \frac{sn}{s+n}
\end{array}\right.
$, and $C = C\pr{n, s, t, A_1, A_0, C_0}$.
\label{UCVW}
\end{theorem}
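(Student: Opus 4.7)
The plan is to deduce this from the local order-of-vanishing estimate of Theorem \ref{thh} via the rescaling argument of Bourgain and Kenig \cite{BK05}. Fix a target point $x_0 \in \R^n$ with $\abs{x_0} = R \gg 1$, choose a dilation parameter $A \asymp R$ (for concreteness, $A = 2R$), and define the rescaled solution $v(y) := u(Ay)$. Then $v$ satisfies
\begin{equation*}
\LP v + \tilde W \cdot \gr v + \tilde V v = 0, \qquad \tilde W(y) := A\,W(Ay), \quad \tilde V(y) := A^2\, V(Ay),
\end{equation*}
and the change-of-variables identities yield the scaled bounds
\begin{equation*}
\norm{\tilde W}_{L^s(B_{10})} \le A^{1 - n/s}\,A_1 =: K, \qquad \norm{\tilde V}_{L^t(B_{10})} \le A^{2 - n/t}\,A_0 =: M.
\end{equation*}

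Because $\abs{v(0)} = \abs{u(0)} \ge 1$, the normalization $\norm{v}_{L^\iny(B_1)} \ge 1$ holds, while $\norm{u}_{L^\iny(\R^n)} \le C_0$ implies $\norm{v}_{L^\iny(B_{10})} \le C_0$. Hence $v$ meets the hypotheses of Theorem \ref{thh} with the $R$-dependent constants $K$ and $M$. Applying that theorem at the rescaled point $\tilde x_0 := x_0/A \in B_1$ (strictly interior since $A = 2R$) with radius $r = 1/A$, and then unwinding the change of variables, gives
\begin{equation*}
\norm{u}_{L^\iny(B_1(x_0))} = \norm{v}_{L^\iny(B_{1/A}(\tilde x_0))} \ge c\, A^{- \pr{C_1 K^\kappa + C_2 M^\mu}} = \exp\pr{-\pr{C_1 K^\kappa + C_2 M^\mu}\log A}.
\end{equation*}

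Finally, substituting $A \asymp R$ gives $K^\kappa \lesssim R^{\kappa(s-n)/s}$ and $M^\mu \lesssim R^{\mu(2t-n)/t}$, so the bound takes the form $\exp\pr{-C\,R^\Pi \log R}$ with $\Pi$ being the larger of the two exponents $\kappa(s-n)/s$ and $\mu(2t-n)/t$. Taking the infimum over $\abs{x_0} = R$ then produces the stated estimate for $\Cal{M}(R)$. The remaining work, and the only real obstacle, is a case-by-case algebraic check that, when one substitutes the piecewise formulas for $\kappa$ and $\mu$ from Theorem \ref{thh}, the maximum of these two exponents collapses to the piecewise expression for $\Pi$ in each of the regimes $t > sn/(s+n)$ and $n(3n-2)/(5n-2) < t \le sn/(s+n)$, with agreement at the interface. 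This step is conceptually routine but arithmetically tedious: the two branches of $\kappa$ and three branches of $\mu$ must each be paired with the correct branch of $\Pi$, and one must verify that the dominant contribution is the one claimed. No new analytic input beyond Theorem \ref{thh} is required.
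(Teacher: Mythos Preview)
Your proposal is correct and uses essentially the same Bourgain--Kenig scaling argument as the paper; the only cosmetic difference is that the paper recenters the dilation at $x_0$ (taking $u_R(x)=u(x_0+Rx)$) rather than at the origin, so that the vanishing-order estimate from Theorem~\ref{thh} is applied at $0$ instead of at $x_0/A$. The paper also simply asserts, without a detailed case check, that $\max\{\kappa(1-n/s),\,\mu(2-n/t)\}$ equals the stated $\Pi$, so your treatment of that step is no less complete than theirs.
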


Now we consider solutions to equation \eqref{goal} with $V(x)\equiv0$, i.e. solutions to
\begin{equation}
\LP u+W(x) \cdot \nabla u=0.
\label{drift}
\end{equation}
An immediate consequence of Theorem \ref{thh} is the vanishing order of solutions for second order elliptic equations with drift.

\begin{corollary}
Let $s \in \pb{\frac{3n-2}{2}, \iny}$. Assume that for some $K \ge
1$, $\norm{W}_{L^s\pr{B_{10}}} \le K$. Let $u : B_{10} \to \C$ be a
solution to \eqref{drift} in $B_{10}$ that is bounded and normalized
in the sense of \eqref{bound} and \eqref{normal}. Then the maximal
order of vanishing for $u$ in  $B_{1}$ is less than $C_1
K^\kappa$. That is,  for any $x_0\in B_1$, 
\begin{align*}
 \|u\|_{L^\iny(B_{r}(x_0))} &\ge c r^{C_1 K^\kappa} \quad \text{ as } r \to 0,
\end{align*}
where $\disp \kappa = \frac{4s}{2s - \pr{3n-2}}$, $c = c\pr{n, s, \hat C}$, and $C_1 = C_1\pr{n, s}$.
\label{thhCor}
\end{corollary}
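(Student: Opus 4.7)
The plan is to obtain Corollary \ref{thhCor} as a direct specialization of Theorem \ref{thh} with $V \equiv 0$. Since the zero function satisfies $\norm{V}_{L^t\pr{B_{10}}} = 0 \le 1$ for every admissible $t$, I would fix $M = 1$ and pick any $t$ in the range $\pb{n\pr{\frac{3n-2}{5n-2}}, \iny}$ that lies in the high-$t$ branch of the formula for $\kappa$, i.e.\ $t > \frac{sn}{s+n}$. A convenient choice is $t = s$. To check admissibility, note that $s > \frac{3n-2}{2}$ together with the elementary inequality $\frac{3n-2}{2} > n\pr{\frac{3n-2}{5n-2}}$ (equivalent to $5n - 2 > 2n$, which holds for $n \ge 3$) gives $t = s > n\pr{\frac{3n-2}{5n-2}}$, and $s(s+n) > sn$ gives $t > \frac{sn}{s+n}$.

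With this choice, the first branch of the formula for $\kappa$ applies, yielding
\begin{equation*}
\kappa = \frac{4s}{2s - \pr{3n-2}},
\end{equation*}
which is exactly the exponent in Corollary \ref{thhCor}. Moreover, since $M = 1$, we have $M^\mu = 1$ regardless of the value of $\mu$. Consequently the conclusion of Theorem \ref{thh} becomes
\begin{equation*}
\norm{u}_{L^\iny\pr{B_r\pr{x_0}}} \ge c\, r^{C_1 K^\kappa + C_2} \quad \text{as } r \to 0,
\end{equation*}
for $x_0 \in B_1$, where, after freezing $t = s$, the constant $c$ depends only on $n$, $s$, and $\hat C$, and $C_1$, $C_2$ depend only on $n$ and $s$.

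To match the precise form claimed in the corollary, I would use the standing assumption $K \ge 1$ to absorb the additive $C_2$ into the leading term via the trivial bound $C_1 K^\kappa + C_2 \le \pr{C_1 + C_2} K^\kappa$. Renaming $C_1 + C_2$ as the new constant $C_1 = C_1(n,s)$ then produces the stated estimate $c\, r^{C_1 K^\kappa}$. Since the content of Corollary \ref{thhCor} is already contained in Theorem \ref{thh}, there is no real obstacle here; the only care required is choosing $t$ so that the $\kappa$-formula reduces to the single advertised branch, after which the argument collapses to relabeling constants.
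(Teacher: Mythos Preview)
Your proposal is correct and matches the paper's approach: the paper simply declares Corollary \ref{thhCor} to be ``an immediate consequence of Theorem \ref{thh}'' with $V\equiv 0$, and your argument spells out precisely that specialization, including the absorption of the additive constant $C_2$ into $C_1 K^\kappa$ via $K\ge 1$.
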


The following unique continuation estimate follows from Corollary \ref{thhCor} in the same way that Theorem \ref{UCVW} follows from Theorem \ref{thh}.

\begin{corollary}
Assume that $\norm{W}_{L^s\pr{\R^n}} \le A_1$ for some $s \in \pb{\frac{3n-2}{2}, \iny}$.
Let $u : \R^n \to \C$ be a solution to \eqref{drift} in $\R^n$ for which $\norm{u}_{L^\iny\pr{\R^n}} \le C_0$ and $\abs{u\pr{0}} \ge 1$.
Then for $R >> 1$,
\begin{equation*}
\mathcal{M}\pr{R} \ge \exp\pr{-C R^\Pi \log R}
\end{equation*}
where $\disp \Pi = \frac{4\pr{s-n}}{2s - \pr{3n-2}}$, and $C = C\pr{n, s, A_1, C_0}$. \label{UCW}
\end{corollary}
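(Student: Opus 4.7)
The plan is to deduce Corollary \ref{UCW} from Corollary \ref{thhCor} via the Bourgain--Kenig rescaling argument. Fix $R \gg 1$ and any $\bar x \in \R^n$ with $\abs{\bar x} = R$; it suffices to produce a lower bound for $\norm{u}_{L^\iny(B_1(\bar x))}$ that is uniform in $\bar x$. Setting the scaling parameter $\rho = 2R$, introduce
\begin{equation*}
\tilde u(y) := u(\rho y), \qquad \tilde W(y) := \rho\, W(\rho y).
\end{equation*}
A direct computation shows $\LP \tilde u + \tilde W \cdot \gr \tilde u = 0$ in $\R^n$, so $\tilde u$ is again a solution to a drift equation of the form \eqref{drift}.

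Next I would verify the hypotheses of Corollary \ref{thhCor} applied to $\tilde u$ on $B_{10}$. The global bound $\norm{u}_{L^\iny(\R^n)} \le C_0$ gives $\norm{\tilde u}_{L^\iny(B_6)} \le C_0$, which plays the role of $\hat C$. The normalization \eqref{normal} is immediate: $\norm{\tilde u}_{L^\iny(B_1)} = \norm{u}_{L^\iny(B_\rho)} \ge \abs{u(0)} \ge 1$. Finally, a change of variables yields
\begin{equation*}
\norm{\tilde W}_{L^s(B_{10})} = \rho^{(s-n)/s} \norm{W}_{L^s(B_{10\rho})} \le A_1 \rho^{(s-n)/s} =: K,
\end{equation*}
where we have used $\norm{W}_{L^s(\R^n)} \le A_1$.

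Now I would apply Corollary \ref{thhCor} to $\tilde u$ at the point $y_0 := \bar x / \rho$, which lies in $B_{1/2} \subset B_1$. Choosing the radius $r := 1/\rho$, the ball $B_r(y_0)$ in $y$-coordinates corresponds exactly to $B_1(\bar x)$ in $x$-coordinates, so the order-of-vanishing bound gives
\begin{equation*}
\norm{u}_{L^\iny(B_1(\bar x))} = \norm{\tilde u}_{L^\iny(B_r(y_0))} \ge c\, r^{C_1 K^\kappa} = c\, \exp\bigl(-C_1 K^\kappa \log \rho\bigr).
\end{equation*}
Substituting $K = A_1 \rho^{(s-n)/s}$, the exponent is $C_1 A_1^\kappa \rho^{\kappa(s-n)/s} \log \rho$, and since $\kappa = \frac{4s}{2s - (3n-2)}$ one checks that $\kappa (s-n)/s = \frac{4(s-n)}{2s - (3n-2)} = \Pi$. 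Absorbing $A_1^\kappa$ and the factor $\log 2$ into the constant $C$, and taking the infimum over $\bar x$ on the sphere of radius $R$, gives the stated bound $\mathcal{M}(R) \ge \exp(-C R^\Pi \log R)$ for all sufficiently large $R$.

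The main obstacle is purely bookkeeping: confirming that $\tilde u$ solves the rescaled PDE with $\tilde W$ having the claimed scaled norm, and verifying that the product $\kappa \cdot (s-n)/s$ collapses to precisely the exponent $\Pi$ in the statement. There is no substantive analytic difficulty beyond the maximal order-of-vanishing estimate supplied by Corollary \ref{thhCor}; the argument is exactly the one used in \cite{BK05}, applied here in the drift-only setting.
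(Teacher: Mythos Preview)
Your proof is correct and follows essentially the same Bourgain--Kenig rescaling argument as the paper; the only cosmetic difference is that the paper centers the rescaled function at $x_0$ (so that $x_0$ maps to the origin and $0$ maps to a point on the unit sphere), whereas you center at the origin with $\rho = 2R$ so that $\bar x$ maps to a point in $B_{1/2}$. Both variants lead to the same exponent computation $\kappa(s-n)/s = \Pi$.
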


Finally, we consider solutions to an elliptic equation without a gradient potential,
\begin{equation}
\LP u+V(x) u=0 .
\label{goal1}
\end{equation}

\begin{theorem}
Let $t \in \pb{\frac{4 n^2}{7n+2}, \iny}$. Assume that for some $M
\ge 1$, $\norm{V}_{L^t\pr{B_{R_0}}} \le M$. Let $u : B_{10} \to \C$
be a solution to \eqref{goal1} in $B_{10}$ that is bounded and
normalized in the sense of \eqref{bound} and \eqref{normal}. Then
the maximal order of vanishing for $u$ in  $B_{1}$ is less than $C_2M^\mu$. That is, for any $x_0\in B_1$, 
\begin{align*}
 \|u\|_{L^\iny(B_{r}(x_0))} &\ge c r^{C_2M^\mu} \quad \text{ as } r \to 0,
\end{align*}
where for any positive $\eps < \min\set{\frac{7t+\frac{2t}n-4n}{2}, \frac{(2t-n)(n+2)}{2n} }$,
$\disp \mu = \left\{\begin{array}{ll}
\frac{4t}{6t - \pr{3n-2}} & t > n \medskip \\
\frac{4t} {7t+\frac{2t}n-4n-\eps} & \frac{4n^2}{7n+2} < t \le n
\end{array}\right.$,
$c=c\pr{n, t, \hat C}$, and $C_2 = C_2\pr{n, t, \eps}$.
\label{thhh}
\end{theorem}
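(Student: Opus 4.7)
\emph{Proof proposal.} The strategy is to specialize the machinery used for Theorem \ref{thh}: combine the $L^p\to L^q$ Carleman estimate of Theorem \ref{Carlpq} with the quantitative Caccioppoli bound of Section \ref{CarlProofs} to obtain a three-ball inequality, then run the propagation-of-smallness chain of \cite{BK05, Ken07}. Because the first-order term is absent in \eqref{goal1}, the pair $(p,q)$ only has to accommodate the $L^t$ hypothesis on $V$, not a competing $L^s$ hypothesis on $W$, so a smaller $\mu$ than the one supplied by Theorem \ref{thh} is available.

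First I would apply Theorem \ref{Carlpq} to $u$ with a radial weight $\phi$ on a spherical shell around $x_0 \in B_1$, obtaining an estimate of the schematic form
\begin{equation*}
\tau^{\beta} \norm{e^{\tau \phi} u}_{L^q} \leq C \norm{e^{\tau \phi} \LP u}_{L^p} + (\text{lower-order terms}),
\end{equation*}
in which $\beta = \beta(n,p,q)$ records the quantitative dependence on $\tau$ furnished by Theorem \ref{Carlpq}. Substituting $\LP u = -V u$ and applying H\"older's inequality with $\tfrac{1}{p} - \tfrac{1}{q} = \tfrac{1}{t}$ yields $\norm{e^{\tau \phi} \LP u}_{L^p} \leq M \norm{e^{\tau \phi} u}_{L^q}$, which is absorbed on the left as soon as $\tau^{\beta} \geq 2CM$, forcing $\tau \sim M^{1/\beta}$. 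Passing from $L^q$ to $L^\iny$ with Caccioppoli on slightly enlarged balls, and then iterating the resulting three-ball inequality along a finite chain of balls connecting $B_1$ and $B_r(x_0)$, converts this into $\norm{u}_{L^\iny(B_r(x_0))} \geq c\, r^{C\tau}$, so that $\mu = 1/\beta$.

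The substantive content is then the optimization of $(p,q)$ in the admissible window $\tfrac{2n}{n+2} < p \leq 2 \leq q \leq \tfrac{2n}{n-2}$ subject to $\tfrac{1}{p} - \tfrac{1}{q} = \tfrac{1}{t}$, chosen to maximize $\beta$. When $t > n$, the admissibility condition $p \le 2$ together with H\"older forces $p = 2$ and $q = \tfrac{2t}{t-2} < \tfrac{2n}{n-2}$, and the $\tau$-power read from Theorem \ref{Carlpq} produces $1/\beta = \tfrac{4t}{6t-(3n-2)}$, matching the first formula (and reducing to $2/3$ as $t\to\iny$, consistent with \cite{BK05}). When $\tfrac{4n^2}{7n+2} < t \leq n$, it becomes preferable to push $q$ to the Sobolev endpoint $\tfrac{2n}{n-2}$ while $p < 2$ is determined by the H\"older relation; the cutoff $t > \tfrac{4n^2}{7n+2}$ is precisely what keeps $p$ strictly above $\tfrac{2n}{n+2}$ after the small interpolation perturbation, which is where the parameter $\eps$ enters, leading to the second formula $\mu = \tfrac{4t}{7t + 2t/n - 4n - \eps}$. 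A sanity check is that both formulas agree at $t = n$ (giving $\tfrac{4n}{3n+2}$) when $\eps \downarrow 0$.

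I expect the main obstacle to be the second regime $\tfrac{4n^2}{7n+2} < t \leq n$: the parameter balancing must be carried out near the boundary of the admissible $p$-range in Theorem \ref{Carlpq}, and the $\eps$-loss has to be propagated consistently through the H\"older absorption step, the Caccioppoli reduction, and the propagation-of-smallness chain so that it appears in the final exponent exactly as stated and does not contaminate either the constant $C_2$ or the restriction on $\eps$.
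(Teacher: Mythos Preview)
Your overall architecture is exactly the paper's: specialize Theorem~\ref{Carlpq} to absorb the $Vu$ term via H\"older with $\tfrac{1}{p}-\tfrac{1}{q}=\tfrac{1}{t}$ (this is Theorem~\ref{CarlpqV}), feed the resulting Carleman inequality through a cut-off to get a three-ball inequality (the lemma producing \eqref{three2}), and then run the propagation-of-smallness chain precisely as in the proof of Theorem~\ref{thh}.

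However, your optimization of $(p,q)$ is backwards, and the choices you write down do \emph{not} produce the stated $\mu$. In Theorem~\ref{Carlpq} the exponent is
\[
\beta_0=\tfrac32-\tfrac{3n-2}{8}\Bigl(\tfrac{2}{p}-1\Bigr)-\tfrac{n}{2}\Bigl(1-\tfrac{2}{q}\Bigr).
\]
Writing $a=\tfrac{2}{p}-1\ge 0$, $b=1-\tfrac{2}{q}\ge 0$, the H\"older constraint reads $a+b=\tfrac{2}{t}$, and since the $b$-coefficient $\tfrac{n}{2}$ exceeds the $a$-coefficient $\tfrac{3n-2}{8}$, $\beta_0$ is maximized by making $b$ as small as possible, i.e.\ by pushing $q$ down to $2$ and $p$ down toward $\tfrac{2n}{n+2}$ --- the opposite of your ``$p=2$'' and ``$q=\tfrac{2n}{n-2}$''. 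Concretely, for $t>n$ your pair $(p,q)=\bigl(2,\tfrac{2t}{t-2}\bigr)$ gives $\beta_0=\tfrac32-\tfrac{n}{t}$ and hence $\mu=\tfrac{2t}{3t-2n}$, not $\tfrac{4t}{6t-(3n-2)}$; the correct choice is $q=2$, $p=\tfrac{2t}{t+2}$. For $\tfrac{4n^2}{7n+2}<t\le n$ the binding constraint is the strict lower bound $p>\tfrac{2n}{n+2}$, so one takes $p=\tfrac{2n}{n+2-\frac{2n\eps}{(n+2)t}}$ (and $q$ determined by H\"older), which is where the $\eps$ truly enters; the threshold $t>\tfrac{4n^2}{7n+2}$ is what makes $\beta_0(q)>0$ for this near-endpoint $p$, not, as you suggest, what keeps $p$ above $\tfrac{2n}{n+2}$. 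Fix the direction of the optimization and the rest of your outline goes through.
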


Using the maximal order of vanishing estimate from the previous theorem, we may prove quantitative unique continuation at infinity estimates.
Notice that the value of $\Pi$ here is much smaller than the one in Theorem \ref{UCVW}.

\begin{theorem}
Assume that $\norm{V}_{L^t\pr{\R^n}} \le A_0$ for some $t \in \pb{ \frac{4 n^2}{7n+2}, \iny}$.
Let $u : \R^n \to \C$ be a solution to \eqref{goal1} in $\R^n$ for which $\norm{u}_{L^\iny\pr{\R^n}} \le C_0$ and $\abs{u\pr{0}} \ge 1$.
Then for $R >> 1$,
\begin{equation*}
\mathcal{M}\pr{R} \ge \exp\pr{-C R^\Pi \log R},
\end{equation*}
where for any positive $\eps < \min\set{\frac{7t+\frac{2t}n-4n}{2}, \frac{(2t-n)(n+2)}{2n} }$,
$\disp \Pi =  \left\{\begin{array}{ll}
\frac{4\pr{2t-n}}{6t - \pr{3n-2}} & t > n \medskip \\
\frac{4\pr{2t-n}} {7t+\frac{2t}n-4n-\eps} & \frac{4n^2}{7n+2} < t \le n
\end{array}\right.$,
 and $C = C\pr{n, t, A_0, C_0, \eps}$.
\label{UCV}
\end{theorem}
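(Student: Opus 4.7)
The plan is to derive Theorem \ref{UCV} from the maximal order of vanishing estimate in Theorem \ref{thhh} via the rescaling argument of Bourgain--Kenig \cite{BK05}, which the authors explicitly cite as their strategy for this type of result. Fix $R \gg 1$ and a point $x_0 \in \R^n$ with $\abs{x_0} = R$. I rescale by setting $v(y) = u(Ry)$ on $\R^n$; then $v$ solves $\LP v + \tilde V v = 0$ with $\tilde V(y) = R^2 V(Ry)$. A change of variables gives
$$\norm{\tilde V}_{L^t\pr{B_{10}}}^t = R^{2t-n} \norm{V}_{L^t\pr{B_{10R}}}^t \le R^{2t-n} A_0^t,$$
so $\norm{\tilde V}_{L^t\pr{B_{10}}} \le R^{2-n/t} A_0 =: M$. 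The normalization hypotheses transfer immediately: $\abs{v(0)} = \abs{u(0)} \ge 1$ forces $\norm{v}_{L^\iny\pr{B_1}} \ge 1$, and $\norm{v}_{L^\iny\pr{\R^n}} \le C_0$ yields the bound on $B_6$.

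Next, apply Theorem \ref{thhh} to $v$ at the point $y_0 := x_0/R$, which lies on $\partial B_1$. A small interior perturbation (for instance, $\tilde y_0 := \pr{1-1/R} y_0 \in B_1$, which is within $1/R$ of $y_0$) places the base point in the open ball at negligible cost. This yields, for $r > 0$ small,
$$\norm{v}_{L^\iny\pr{B_r\pr{\tilde y_0}}} \ge c \, r^{C_2 M^\mu}.$$
Since $v(y) = u(Ry)$, scaling back gives $\norm{v}_{L^\iny\pr{B_r\pr{\tilde y_0}}} = \norm{u}_{L^\iny\pr{B_{Rr}\pr{R\tilde y_0}}}$. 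Choosing $r = 2/R$ (so that $B_{Rr}\pr{R\tilde y_0} \supset B_1\pr{x_0}$) and substituting $M = R^{2-n/t} A_0$ yields
$$\norm{u}_{L^\iny\pr{B_1\pr{x_0}}} \ge c \pr{2/R}^{C_2 M^\mu} \ge c \exp\pr{-C_2 A_0^\mu R^{\pr{2-n/t}\mu} \log R}.$$
Taking the infimum over $\abs{x_0} = R$ and absorbing the prefactor $A_0^\mu$ and the constant $\log c$ into the leading $\log R$ term (permissible for $R$ large) produces $\mathcal{M}\pr{R} \ge \exp\pr{-C R^\Pi \log R}$ with $\Pi = \pr{2-n/t}\mu$.

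Substituting the two piecewise values of $\mu$ from Theorem \ref{thhh} reproduces the stated $\Pi$: for $t > n$,
$$\pr{2-\tfrac{n}{t}} \cdot \frac{4t}{6t - \pr{3n-2}} = \frac{4\pr{2t-n}}{6t - \pr{3n-2}},$$
and for $\frac{4n^2}{7n+2} < t \le n$ one analogously obtains $\frac{4\pr{2t-n}}{7t + \frac{2t}{n} - 4n - \eps}$, with the same $\eps$ constraint inherited from Theorem \ref{thhh}. The only point requiring any care is the boundary issue for $y_0 \in \partial B_1$, resolved by the interior perturbation above; there is no substantive obstacle here, since all of the analytic work (Carleman inequalities, three-ball estimates, and the propagation of smallness) has already been done in establishing Theorem \ref{thhh}. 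The hard step was the maximal order of vanishing; this scaling argument merely converts it into a statement at infinity.
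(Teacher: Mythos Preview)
Your overall strategy is exactly the paper's: rescale by $R$, check that the rescaled potential has $L^t$ norm $\le A_0 R^{2-n/t}$, invoke Theorem \ref{thhh}, and read off $\Pi = \mu\pr{2-n/t}$. However, there is a genuine slip in the ball containment step. You choose $\tilde y_0 = \pr{1-1/R}y_0$ and $r = 2/R$ so that $B_{Rr}\pr{R\tilde y_0} \supset B_1\pr{x_0}$; but this inclusion gives $\norm{u}_{L^\iny\pr{B_1\pr{x_0}}} \le \norm{u}_{L^\iny\pr{B_{Rr}\pr{R\tilde y_0}}}$, which is the wrong direction. A lower bound on the norm over the \emph{larger} ball does not yield a lower bound on the norm over the smaller one. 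Concretely, with your choices $R\tilde y_0$ sits at distance exactly $1$ from $x_0$, so no ball $B_{Rr}\pr{R\tilde y_0}$ with $Rr > 0$ can be contained in $B_1\pr{x_0}$.

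The easy fix is to perturb less: take $\tilde y_0$ at distance, say, $1/\pr{2R}$ from $y_0$ (so $\tilde y_0 \in B_1$ and $\abs{R\tilde y_0 - x_0} = 1/2$) and $r = 1/\pr{2R}$; then $B_{Rr}\pr{R\tilde y_0} = B_{1/2}\pr{R\tilde y_0} \subset B_1\pr{x_0}$ and the inequality goes the right way. The paper sidesteps the whole issue by centering the rescaling at $x_0$ rather than at the origin: with $u_R\pr{x} = u\pr{x_0 + Rx}$, the point where one applies Theorem \ref{thhh} is $0 \in B_1$ (no boundary perturbation needed), and $B_{1/R}\pr{0}$ in rescaled coordinates is exactly $B_1\pr{x_0}$, so $\norm{u}_{L^\iny\pr{B_1\pr{x_0}}} = \norm{u_R}_{L^\iny\pr{B_{1/R}\pr{0}}}$ with no containment to track. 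Either route works once corrected.
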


\section{Carleman estimates }
\label{CarlEst}

In this section, we state the crucial tools, the quantitative $L^p-L^{ q}$ type Carleman estimates.
Let $r=|x-x_0|$ and set
$$\phi(r)=\log r+\log(\log r)^2.$$
We use the notation $\|u\|_{L^p(r^{-n} dx)}$ to denote the $L^p$ norm with weight $r^{-n}$, i.e. $\|u\|_{L^p(r^{-n} dx)} = \disp \pr{\int |u\pr{x}|^p r^{-n}\, dx}^\frac{1}{p}$.
Our quantitative $L^p - L^q$ Carleman estimate for the Laplacian is as follows.

\begin{theorem}
Let $\frac{2n}{n+2} < p \le 2 \leq q \leq \frac{2n}{n-2}$. There
exists a constant $C$, depending on $n$, $p$, and $q$, and a sufficiently small $R_0$ such that for any
$u\in C^{\infty}_{0}\pr{B_{R_0}(x_0)\backslash\set{x_0} }$ and
$\tau>1$, one has
\begin{align}
&\tau^{\be_0} \|(\log r)^{-1} e^{-\tau \phi(r)}u\|_{L^q(r^{-n}dx)} +
\tau^{\be_1} \|(\log r )^{-1} e^{-\tau \phi(r)}r \nabla
u\|_{L^2(r^{-n}dx)}
\nonumber \medskip\\
&\leq  C \|(\log r ) e^{-\tau \phi(r)} r^2 \LP u\|_{L^p(r^{-n} dx)} ,
\label{mainCar}
\end{align}
where $\be_0 = \frac 3 2 -
\frac{\pr{3n-2}\pr{2-p}}{8p}-\frac{n(q-2)}{2q}$ and $\be_1 =
\frac{1}{2}-\frac{\pr{3n-2}\pr{2-p}}{8p}$.
 \label{Carlpq}
\end{theorem}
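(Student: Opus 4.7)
The plan is to move to the cylinder $\R \times S^{n-1}$ via $x = e^t \om$ with $t = \log r$, under which the measure $r^{-n}dx$ collapses to $dt\,d\om$ and the dilated Laplacian becomes $r^2 \LP = \partial_t^2 + \pr{n-2}\partial_t + \LP_{S^{n-1}}$. Setting $v\pr{t,\om} = u\pr{e^t \om}$, the weight $\phi\pr{r} = \log r + \log\pr{\log r}^2$ becomes $\phi = t + 2\log\abs{t}$, and conjugation by $e^{-\tau\phi}$ yields a second-order operator $L_\tau$ on the cylinder whose principal part is real and whose angular part is simultaneously diagonalizable with $-\LP_{S^{n-1}}$. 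I would eliminate the residual first-order drift in $t$ by an auxiliary conjugation and factor the result as $L_\tau = L_+ \circ L_-$, where $L_\pm = \partial_t - A_\pm\pr{t}$ and, on the $k$th spherical-harmonic projector $P_k$, $A_\pm$ acts as a scalar $a_\pm\pr{t,k}$ built from $\tau\phi'$ and $\sqrt{k\pr{k+n-2} + O\pr{\tau/\abs{t}}}$. The logarithmic correction $2\log\abs{t}$ is chosen precisely so that $\abs{a_+ - a_-} \gtrsim \tau/\abs{t}$ near the resonant frequency $k \sim \tau$.

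First I would establish $L^2\pr{dt\,d\om} \to L^2\pr{dt\,d\om}$ Carleman-type bounds of the shape $\tau^{1/2}\|v\|_{L^2} \le C\|L_\pm v\|_{L^2}$ by a positive-commutator computation: after decomposing $v$ into spherical harmonics, each frequency block reduces to a one-dimensional ODE inequality on $\R$ whose symmetric part is positive because of the convexity properties of $\phi$. Composing the two first-order estimates reproduces the $L^2 \to L^2$ form of \eqref{mainCar} with the stated power of $\tau$ in the endpoint case $p = q = 2$.

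Next, to reach $p < 2$ and $q > 2$, I would combine the first-order bounds with Sogge's sharp spherical-harmonic projector estimates $\|P_k f\|_{L^q\pr{S^{n-1}}} \lesssim \pr{1+k}^{\si\pr{q}}\|f\|_{L^2\pr{S^{n-1}}}$ from \cite{Sog86}, where $\si\pr{q} = n\pr{\frac{1}{2} - \frac{1}{q}} - \frac{1}{2}$ in the Sobolev range. Following the oscillatory-integral / resolvent scheme of \cite{Jer86}, \cite{BKRS88}, and \cite{Reg99}, I would derive inequalities of the form $\tau^{\ga_+}\|v\|_{L^q} \le C\|L_+ v\|_{L^2}$ and $\tau^{\ga_-}\|v\|_{L^2} \le C\|L_- v\|_{L^p}$ on the cylinder, with losses $\ga_\pm$ read off from $\si\pr{q}$ and $\si\pr{p'}$. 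Composing the two resolvent bounds yields the $L^p \to L^q$ Carleman estimate for $\LP$ with exponent $\be_0 = \ga_+ + \ga_-$, which after reducing $\si$ matches the exponent stated in the theorem. The gradient term on the left of \eqref{mainCar} is then handled by a Caccioppoli-type step: pairing $L_\tau v$ against a suitable cutoff of $v$ and integrating by parts in $t$ and on $S^{n-1}$ bounds $\|\partial_t v\|_{L^2} + \|\gr_\om v\|_{L^2}$ in terms of $\tau^{1/2}\|v\|_{L^2} + \tau^{-1/2}\|L_\tau v\|_{L^p}$, giving exactly the $\tau^{\be_1}$-weighted gradient estimate. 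The full stated range of $\pr{p,q}$ follows by interpolating the Knapp / Sogge endpoint case $\pr{p,q} = \pr{2,\tfrac{2n}{n-2}}$, its dual, and the trivial $p = q = 2$ case.

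The main obstacle will be the $L^p \to L^2$ resolvent bound for $L_-$: its symbol degenerates exactly at the critical frequency $k \approx \tau\phi'$, which is where Sogge's endpoint spherical-harmonic estimates are needed, and the dyadic loss must come out to be precisely $\pr{3n-2}\pr{2-p}/\pr{8p}$ if the sharp exponents $\be_0$ and $\be_1$ are to emerge. Achieving this requires the log-log correction in $\phi$ to generate a quantitative spectral gap at the resonant frequency, together with a careful dyadic-frequency bookkeeping of the losses through the composition of $L_+$ and $L_-$.
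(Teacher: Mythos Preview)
Your overall architecture is right --- pass to the cylinder, factor into first-order pieces, and bring in Sogge's projector bounds --- but there is a genuine error in the $L^2$ baseline and two methodological divergences from the paper.

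The error is the symmetric claim $\tau^{1/2}\|v\|_{L^2}\le C\|L_\pm v\|_{L^2}$. The two factors are not interchangeable. After conjugation, $L^+$ acts on the $k$th harmonic as $\partial_t + \tau\varphi' + k + (n-2)$, whose zero-order part is uniformly $\gtrsim \tau$ for all $k\ge 0$; an integration-by-parts argument then yields a full $\tau^{1}$ gain together with control of $\|\partial_t v\|_{L^2}$ and $\sum_j\|\Omega_j v\|_{L^2}$ (this is the paper's Lemma~\ref{Car22}). By contrast $L^-$ acts as $\partial_t + \tau\varphi' - k$, which degenerates at the resonant frequency $k\approx\tau$; only the convexity term $\varphi'' = -2/t^2$ rescues a $\tau^{1/2}$ gain (Lemma~\ref{CarL-qq}). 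Composing your symmetric $\tau^{1/2}$ bounds gives $\tau^{1}$, not the required $\tau^{3/2}=\tau^{\be_0}\big|_{p=q=2}$. The asymmetry between $L^+$ and $L^-$ is the source of the $3/2$ and must be kept.

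On the method: the paper does \emph{not} use Sogge on the $L^q$ side, nor does it obtain the gradient term by a separate Caccioppoli pairing. Sogge's estimates enter only in the $L^p\to L^2$ bound for $L^-$ (Lemma~\ref{CarL-p}); its proof splits into high frequencies $k>2\tau$, where $L^-$ is elliptic and Young's inequality for the explicit Green kernel suffices, and low frequencies $k\le 2\tau$, where the degeneration occurs and the interpolated Sogge bound on sums $\sum c_k P_k$ is applied to the kernel. The $L^+$ estimate (Lemma~\ref{Car22}) already delivers the gradient terms on the left; combining it with the $L^p\to L^2$ bound for $L^-$ gives \eqref{mainCar} in the case $q=2$. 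The upgrade to general $q$ is then done by ordinary Sobolev embedding applied to $(\log r)^{-1}e^{-\tau\phi}u\,r^{(2-n)/2}$ (which is controlled by the gradient and $L^2$ terms already in hand) followed by interpolation in $q$ alone between $q=2$ and $q=\tfrac{2n}{n-2}$. This is both simpler than a second resolvent/Sogge argument for $L^+$ and explains why $\be_1$ depends only on $p$.
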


\begin{remark}
The Carleman estimates given by \eqref{mainCar} differ from those in \cite{JK85} and \cite{KT01} where $p=\frac{2n}{n+2}$ and $q=\frac{2n}{n-2}$ since our estimates in \eqref{mainCar} hold for a range of $p$ and $q$ values.
Furthermore, the power of the parameter $\tau$ is shown explicitly, which is crucial to calculating the vanishing order of solutions.
\end{remark}
\begin{remark}
We may at times use the notation $\be_0\pr{q}$ to remind the reader that $\be_0$ depends on $q$.
This notation will be useful when we work with multiple $q$-values.
\end{remark}

The proof of Theorem \ref{Carlpq} is given in the next section.

Now we use Theorem \ref{Carlpq} to establish the following $L^p\to L^2$ Carleman estimates for second order elliptic equations of the form \eqref{goal}.
We show that for an appropriate choice of $p$, and for $\tau$ sufficiently large, we may replace the Laplacian with a more general elliptic operator.
In each of the following three theorems, we use H\"older's inequality and the triangle inequality to go from Theorem \ref{Carlpq} to a Carleman estimate for an elliptic operator with lower order terms.
Since the argument is the simplest, we start with drift operators ($V \equiv 0$) corresponding to equations of the form \eqref{drift}.

\begin{theorem}
Let $s \in \pb{\frac{3n-2}{2}, \iny}$.
Assume that for some $K \ge 1$, $\norm{W}_{L^s\pr{B_{R_0}}} \le K$.
Then there exist constants $C_0$, $C_1$, and sufficiently small $R_0 < 1$  such that for any $u\in C^{\infty}_{0}(B_{R_0}(x_0)\setminus \set{x_0})$ and large positive constant
$$\tau \ge C_1 K^{\kappa},$$
one has
\begin{align}
\tau^{\be_0} \|(\log r)^{-1} e^{-\tau \phi(r)}u\|_{L^2(r^{-n}dx)}
&\leq  C_0 \|(\log r ) e^{-\tau \phi(r)} r^2\pr{ \LP u + W \cdot \gr u}\|_{L^p(r^{-n} dx)} ,
\label{main1}
\end{align}
where $\kappa = \frac{4s}{2s - \pr{3n-2}}$, $p = \frac{2s}{s+2}$, and $\be_0 = \be_0\pr{2}$ as defined in Theorem \ref{Carlpq}.
Moreover, $C_0 = C$ from Theorem \ref{Carlpq}, and $C_1 = C_1\pr{n, s}$.
\label{CarlpqW}
\end{theorem}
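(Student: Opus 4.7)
The plan is to derive \eqref{main1} by treating the drift term $W \cdot \gr u$ as a perturbation of the Laplacian in Theorem \ref{Carlpq} and absorbing it into the gradient term that already appears on the left-hand side of that estimate. Specialize Theorem \ref{Carlpq} to $q=2$ and $p = \frac{2s}{s+2}$. The requirement $s > \frac{3n-2}{2}$ ensures $p \in \bp{\frac{2n}{n+2}, 2}$, and by construction $\frac{1}{p} = \frac{1}{2} + \frac{1}{s}$. A direct substitution yields
\[
\be_1 = \tfrac{1}{2} - \tfrac{(3n-2)(2-p)}{8p} = \tfrac{2s-(3n-2)}{4s}, \qquad \tfrac{1}{\be_1} = \kappa,
\]
and this algebraic identity is exactly what forces the threshold $\tau \ge C_1 K^\kappa$ to be the correct one.

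Next, apply the chosen instance of Theorem \ref{Carlpq}, write $\LP u = (\LP u + W \cdot \gr u) - W \cdot \gr u$, and use the triangle inequality on the right-hand side. The only term to control is $\norm{(\log r) e^{-\tau \phi} r^2 W \cdot \gr u}_{L^p(r^{-n}dx)}$. Factor the integrand as
\[
(\log r) e^{-\tau \phi} r^2 W \cdot \gr u = \bigl[r (\log r)^2 W\bigr] \cdot \bigl[(\log r)^{-1} e^{-\tau \phi} r \gr u\bigr],
\]
and apply H\"older's inequality with exponents $s$ and $2$ against the measure $r^{-n}dx$ (conjugate to $p$ by the choice above) to obtain
\[
\norm{(\log r) e^{-\tau \phi} r^2 W \cdot \gr u}_{L^p(r^{-n}dx)} \le \norm{r(\log r)^2 W}_{L^s(r^{-n}dx)} \cdot \norm{(\log r)^{-1} e^{-\tau \phi} r \gr u}_{L^2(r^{-n}dx)}.
\]
For the first factor, unpack $\norm{r(\log r)^2 W}_{L^s(r^{-n}dx)}^s = \int r^{s-n}(\log r)^{2s} |W|^s\, dx$. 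Because $s > \frac{3n-2}{2} > n$, the weight $r^{s-n}(\log r)^{2s}$ is bounded on $(0, R_0]$ (polynomial decay beats the logarithmic blow-up), so the whole factor is dominated by $C(n, s, R_0) K$.

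The perturbation is therefore controlled by a constant multiple of $K$ times the $r\gr u$ norm that Theorem \ref{Carlpq} already places on the left. Requiring $\tau^{\be_1} \ge 2 C \cdot C(n, s, R_0) K$, which by the identity $\frac{1}{\be_1} = \kappa$ is equivalent to $\tau \ge C_1 K^\kappa$ for an appropriate $C_1 = C_1(n, s)$, lets us absorb the perturbation into the gradient term on the left and then discard the remaining (nonnegative) gradient contribution. What remains is exactly \eqref{main1}. The only genuine subtlety is sharpness of the hypothesis $s > \frac{3n-2}{2}$: this is precisely the condition $\be_1 > 0$, without which absorption fails and the entire scheme collapses, so I would not expect to weaken it without a new idea.
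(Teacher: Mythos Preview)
Your proposal is correct and follows essentially the same route as the paper: specialize Theorem~\ref{Carlpq} to $q=2$ and $p=\tfrac{2s}{s+2}$, split $\LP u$ via the triangle inequality, bound the drift term by H\"older so that it is controlled by $K$ times the weighted $L^2$-norm of $r\nabla u$, and absorb using $\tau^{\be_1}$ with $\be_1=\tfrac{2s-(3n-2)}{4s}$, whence $1/\be_1=\kappa$. The only cosmetic difference is that you apply H\"older against the weighted measure $r^{-n}\,dx$ and bound $r^{s-n}(\log r)^{2s}$ in $L^\infty$, whereas the paper applies H\"older against Lebesgue measure and bounds $(\log r)^2 r^{1-n/s}$ in $L^\infty$; both reductions use only $s>n$, which follows from $s>\tfrac{3n-2}{2}$ when $n\ge 3$.
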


\begin{proof}
By \eqref{mainCar} in Theorem \ref{Carlpq} with $q = 2$ and the triangle inequality,
\begin{align}
&\tau^{\be_0} \|(\log r)^{-1} e^{-\tau \phi(r)}u\|_{L^2(r^{-n}dx)}
+\tau^{\be_1} \|(\log r )^{-1} e^{-\tau \phi(r)}r \nabla u\|_{L^2(r^{-n}dx)} \nonumber \\
&\leq  C \|(\log r ) e^{-\tau \phi(r)} r^2 \LP u\|_{L^p(r^{-n} dx)}  \nonumber \\
&\le C\|(\log r) e^{-\tau \phi(r)} r^2 (\LP u+ W\cdot \nabla u)\|_{L^p(r^{-n} dx)}
+ C\|(\log r) e^{-\tau \phi(r)} r^2 W\cdot \nabla u \|_{L^p(r^{-n} dx)}.
\label{triIneq0}
\end{align}
Therefore, to reach the conclusion of the theorem, we need to absorb the second term on the right of \eqref{triIneq0} into the lefthand side.

Set $p = \frac{2s}{s+2}$.
By assumption, $s > \frac{3n-2}{2} > n$, so it follows that $p \in \pb{\frac{2n}{n+2}, 2}$.
Since $\frac{1}{p}=\frac{1}{s}+\frac{1}{2}$, then by an application of H\"older's inequality we have
\begin{align}
&\|(\log r) e^{-\tau\phi(r)} r^2 W\cdot\nabla u\|_{L^p(r^{-n} dx)} \nonumber \\
&\le  \|W\|_{L^{s}\pr{B_{R_0}}} \|(\log r) e^{-\tau \phi(r)} r^{-\frac{n}{p}+\frac{n}{2}+2} \nabla u \|_{L^2(r^{-n} dx)} \nonumber \\
&\le  \|W\|_{L^{s}\pr{B_{R_0}}} \|\pr{\log r}^2 r^{1 + \frac n 2 - \frac n p}\|_{L^{\iny}\pr{B_{R_0}}} \|(\log r)^{-1} e^{-\tau \phi(r)} r \nabla u \|_{L^2(r^{-n} dx)} \nonumber \\
&\le c K \|(\log r)^{-1} e^{-\tau \phi(r)} r \nabla u \|_{L^2(r^{-n} dx)},
\label{hod1}
\end{align}
where $p \in \pb{\frac{2n}{n+2}, 2}$ implies that $1 + \frac n 2 - \frac n p > 0$, so that $\pr{\log r}^2 r^{1 + \frac n 2 - \frac n p}$ is bounded on $B_{R_0}$.

By combining \eqref{triIneq0} and \eqref{hod1}, we see that
\begin{align*}
&\tau^{\be_0} \|(\log r)^{-1} e^{-\tau \phi(r)}u\|_{L^2(r^{-n}dx)}
+\tau^{\be_1} \|(\log r )^{-1} e^{-\tau \phi(r)}r \nabla u\|_{L^2(r^{-n}dx)} \nonumber \\
&\le C\|(\log r) e^{-\tau \phi(r)} r^2 (\LP u+ W\cdot \nabla u)\|_{L^p(r^{-n} dx)}
+ c C K \|(\log r)^{-1} e^{-\tau \phi(r)} r \nabla u \|_{L^2(r^{-n} dx)}.
\end{align*}
Since $\be_1 = \frac 1 2 - \frac{3n-2}{4s} = \frac{2s - \pr{3n-2}}{4s} > 0$, if we choose $\tau \ge  \pr{c C K}^{\frac 1 {\be_1}}$, then we may absorb the second term on the right into the lefthandside.
That is,
\begin{align*}
\tau^{\be_0} \|(\log r)^{-1} e^{-\tau \phi(r)}u\|_{L^2(r^{-n}dx)} 
&\le C\|(\log r) e^{-\tau \phi(r)} r^2 (\LP u+ W\cdot \nabla u)\|_{L^p(r^{-n} dx)},
\end{align*}
giving the conclusion of the theorem.
\end{proof}

Now we consider more general elliptic operators.

\begin{theorem}
Let $s \in \pb{\frac{3n-2}{2}, \iny}$ and $t \in \pb{ n\pr{\frac{3n-2}{5n-2}}, \iny}$.
Assume that for some $K, M \ge 1$, $\norm{W}_{L^s\pr{B_{R_0}}} \le K$ and $\norm{V}_{L^t\pr{B_{R_0}}} \le M$.
Then there exist constants $C_0$, $C_1$, $C_2$, and sufficiently small $R_0 < 1$  such that for any $u\in C^{\infty}_{0}(B_{R_0}(x_0)\setminus \set{x_0})$ and large positive constant
$$\tau \ge C_1 K^{\kappa} + C_2 M^{\mu},$$
one has
\begin{align}
\tau^{\be_0} \|(\log r)^{-1} e^{-\tau \phi(r)}u\|_{L^2(r^{-n}dx)}
&\leq  C_0 \|(\log r ) e^{-\tau \phi(r)} r^2\pr{ \LP u + W \cdot \gr u + V u}\|_{L^p(r^{-n} dx)} ,
\label{main1}
\end{align}
where
$\disp \kappa = \left\{\begin{array}{ll}
\frac{4s}{2s - \pr{3n-2}} & t > \frac{sn}{s+n} \medskip \\
\frac{4t}{\pr{5 - \frac 2 n}t - \pr{3n-2}} & n\pr{\frac{3n-2}{5n-2}}
< t \le \frac{sn}{s+n}
\end{array}\right.$,
$\disp \mu = \left\{\begin{array}{ll}
\frac{4s}{6s - \pr{3n-2}} & t \ge s \medskip \\
\frac{4 s t}{6 s t + \pr{n+2}t -4ns} & \frac{sn}{s+n} < t < s \medskip \\
\frac{4t}{\pr{5 - \frac 2 n}t - \pr{3n-2}} & n\pr{\frac{3n-2}{5n-2}}
< t \le \frac{sn}{s+n}
\end{array}\right.$,
$\disp p = \left\{\begin{array}{ll}
\frac{2s}{s+2} & t > \frac{sn}{s+n} \medskip \\
\frac{2 n t }{2n - 2t + nt } & n\pr{\frac{3n-2}{5n-2}} < t \le
\frac{sn}{s+n}
\end{array}\right.$,
and $\be_0 = \be_0\pr{2}$ as defined in Theorem \ref{Carlpq}.
Moreover, $C_0 = 2C$, where $C$ is from Theorem \ref{Carlpq}, $C_1 = C_1\pr{n, s, t}$, and $C_2 = C_2\pr{n,s,t}$.
\label{CarlpqVW}
\end{theorem}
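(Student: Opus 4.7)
The plan is to mimic the proof of Theorem \ref{CarlpqW}: apply Theorem \ref{Carlpq}, pull out the $W \cdot \gr u$ and $Vu$ terms with the triangle inequality, then absorb each via a weighted H\"older's inequality after taking $\tau$ sufficiently large. What is new is that $V$ and $W$ may live in different spaces, so the exponent $p$ in Theorem \ref{Carlpq} (and the choice of $q$-index in the function norm on the left) must be tuned to both $s$ and $t$. Two regimes arise: when $V$ is ``not too much worse'' than $W$ (namely $t > sn/(s+n)$), the same $p = 2s/(s+2)$ as in Theorem \ref{CarlpqW} is workable, while in the complementary range $p$ must be pushed down to $2nt/(2n-2t+nt)$. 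Because the conclusion calls for an $L^2$ bound on $u$ but the $V$-term naturally wants to be absorbed through an $L^{q^*}$ norm with $q^* > 2$, I will in the generic case apply Theorem \ref{Carlpq} at $q=2$ and at a second $q^* \in (2, \tfrac{2n}{n-2}]$ and sum the two resulting estimates before performing the absorption.

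\emph{Regime $t > sn/(s+n)$.} With $p = 2s/(s+2)$, the $W$-term is handled exactly as in Theorem \ref{CarlpqW}, so it is absorbed into the $\tau^{\be_1}$ gradient piece with $\be_1 = [2s - (3n-2)]/(4s)$, yielding $\kappa = 4s/[2s-(3n-2)]$. For $V$, a weighted H\"older with $1/p = 1/t + 1/q^*$ forces $q^* = 2st/[(s+2)t - 2s]$ and produces a bound of the form $\|(\log r)e^{-\tau\phi} r^2 Vu\|_{L^p(r^{-n}dx)} \le CM \|(\log r)^{-1}e^{-\tau\phi} u\|_{L^{q^*}(r^{-n}dx)}$, where the excess weight $(\log r)^2 r^{2-n/t}$ is bounded on $B_{R_0}$ since $t > n/2$. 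If $t \ge s$ then $q^* \le 2$, so I first use $\|V\|_{L^s(B_{R_0})} \le C\|V\|_{L^t}$ to replace $t$ by $s$ and reduce to $q^* = 2$; absorption via $\tau^{\be_0(2)}$ gives $\mu = 4s/[6s - (3n-2)]$. If instead $sn/(s+n) < t < s$ then $q^* \in (2, 2n/(n-2)]$; summing Theorem \ref{Carlpq} at $q=2$ and $q=q^*$ and absorbing $V$ through $\tau^{\be_0(q^*)}$ yields, after a direct computation of $\be_0(q^*)$ from its definition, $\mu = 4st/[6st + (n+2)t - 4ns]$.

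\emph{Regime $n(3n-2)/(5n-2) < t \le sn/(s+n)$.} Here I set $p = 2nt/(2n-2t+nt)$; the lower bound on $t$ is exactly what places this $p$ in $(\tfrac{2n}{n+2}, 2]$, making Theorem \ref{Carlpq} applicable. The split $1/p = 1/t + 1/q^*$ now forces $q^* = 2n/(n-2)$, the Sobolev endpoint, so I apply Theorem \ref{Carlpq} at $q=2$ and at $q=2n/(n-2)$ and sum. The $V$-term is absorbed through $\tau^{\be_0(2n/(n-2))}$; evaluating $\be_0$ at this endpoint produces $[(5-2/n)t - (3n-2)]/(4t)$ and hence $\mu = 4t/[(5-2/n)t - (3n-2)]$. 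For the $W$-term in this regime I use the alternative split $1/p = 1/\alpha + 1/2$ with $\alpha = nt/(n-t)$; the inequality $t \le sn/(s+n)$ is exactly equivalent to $\alpha \le s$, so on the bounded ball $\|W\|_{L^\alpha(B_{R_0})} \le C\|W\|_{L^s} \le CK$, and one also checks $\alpha > n/2$ so that the resulting excess weight is bounded. Absorption through $\tau^{\be_1}$, combined with the (easily verified) coincidence $\be_1 = \be_0(2n/(n-2))$ when $p$ is of this form, yields $\kappa = \mu = 4t/[(5-2/n)t - (3n-2)]$.

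The main technical obstacle I expect is not any single step but the careful bookkeeping of the weights: after each H\"older step, each lower-order term produces an excess factor $(\log r)^2 r^a$ that must be bounded on $B_{R_0}$, and the positivity of $a$ in every case is exactly what pins down the endpoints $s > (3n-2)/2$ and $t > n(3n-2)/(5n-2)$ as well as the cutoff $t = sn/(s+n)$ between the two regimes. Once those positivity checks are made and the chosen $q^*$-values are seen to lie in $[2, 2n/(n-2)]$, the combined threshold $\tau \ge C_1 K^\kappa + C_2 M^\mu$ absorbs both perturbations and the claimed inequality \eqref{main1} appears as the $\tau^{\be_0(2)}\|u\|_{L^2}$ piece of the summed Carleman estimates.
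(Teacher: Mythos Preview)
Your proposal is correct and follows essentially the same route as the paper: the same three-case split (your two regimes, with the first sub-divided at $t=s$), the same choices of $p$ and $q^*$ in each case, and the same H\"older-and-absorb mechanism through $\tau^{\be_1}$ for $W$ and $\tau^{\be_0(q^*)}$ for $V$. One small slip: in the second regime the excess weight attached to the $W$-term is $(\log r)^2 r^{1-n/\alpha}$, so the boundedness condition you need is $\alpha>n$ (equivalently $t>n/2$), not $\alpha>n/2$; this holds because $t > n(3n-2)/(5n-2) > n/2$ for $n\ge 3$.
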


\begin{proof}
If we add inequality \eqref{mainCar} from Theorem \ref{Carlpq} with $q = 2$ to the same inequality with $q$ arbitrary, we see that
\begin{align}
&\tau^{\be_0\pr{2}} \|(\log r)^{-1} e^{-\tau \phi(r)}u\|_{L^2(r^{-n}dx)}
+\tau^{\be_0\pr{q}} \|(\log r)^{-1} e^{-\tau \phi(r)}u\|_{L^q(r^{-n}dx)} \nonumber \\
&+\tau^{\be_1} \|(\log r )^{-1} e^{-\tau \phi(r)}r \nabla u\|_{L^2(r^{-n}dx)} \nonumber \\
&\le 2 C\|(\log r) e^{-\tau \phi(r)} r^2 (\LP u)\|_{L^p(r^{-n} dx)} \nonumber \\
&\le 2 C\|(\log r) e^{-\tau \phi(r)} r^2 (\LP u+ W\cdot \nabla u + V u)\|_{L^p(r^{-n} dx)} \nonumber \\
&+ 2 C\|(\log r) e^{-\tau \phi(r)} r^2 W\cdot \nabla u \|_{L^p(r^{-n} dx)}
+ 2 C\|(\log r) e^{-\tau \phi(r)} r^2 V u \|_{L^p(r^{-n} dx)},
\label{triIneq}
\end{align}
where the last line follows from the triangle inequality.
Therefore, to reach the conclusion of the lemma, we need to choose $p$ and $q$, and make $\tau$ large enough so that we may absorb the last two terms into the lefthand side.

An application of H\"older' inequality shows that if $p \in \pb{\frac{2n}{n+2}, 2}$, then
\begin{align}
& \|(\log r) e^{-\tau\phi(r)} r^2 W\cdot\nabla u\|_{L^p(r^{-n} dx)} \nonumber \medskip \\
&\le  \|W\|_{L^{\frac{2p}{2-p}}\pr{B_{R_0}}} \|\pr{\log r}^2 r^{1 + \frac n 2 - \frac n p}\|_{L^{\iny}\pr{B_{R_0}}} \|(\log r)^{-1} e^{-\tau \phi(r)} r \nabla u \|_{L^2(r^{-n} dx)} \nonumber \\
&\le c \|W\|_{L^{\frac{2p}{2-p}}\pr{B_{R_0}}} \|(\log r)^{-1} e^{-\tau \phi(r)} r \nabla u \|_{L^2(r^{-n} dx)},
\label{hod2}
\end{align}
since $1 + \frac n 2 - \frac n p > 0$.
Similarly, if we further assume that $q \ge p$, then
\begin{align}
\|(\log r) e^{-\tau \phi(r)} r^2 V u \|_{L^p(r^{-n} dx)}
&\le  \|V\|_{L^{\frac{pq}{q-p}}\pr{B_{R_0}}} \|\pr{\log r}^2 r^{2 + \frac n q - \frac n p}\|_{L^{\iny}\pr{B_{R_0}}}
\nonumber \\ &\times\|(\log r)^{-1} e^{-\tau \phi(r)} u\|_{L^q(r^{-n} dx)} \nonumber \\
&\le c \|V\|_{L^{\frac{pq}{q-p}}\pr{B_{R_0}}} \|(\log r)^{-1} e^{-\tau \phi(r)} u\|_{L^q(r^{-n} dx)}.
\label{hod3}
\end{align}

{\bf Case 1: $t \in \brac{s, \iny}$} \\
If $t \ge s$, then we choose $p = \frac{2s}{s+2}$ and $q = 2$.
Since $s > \frac{3n-2}{2}$, then $p$ is in the appropriate range.
As $\frac{2p}{2-p} = \frac{pq}{q-p} = s \le t$, then substituting \eqref{hod2} and \eqref{hod3} into \eqref{triIneq}, and using that $\|V\|_{L^{s}} \le c \|V\|_{L^{t}}$ by H\"older's inequality, we have that
\begin{align*}
&2\tau^{\be_0\pr{2}} \|(\log r)^{-1} e^{-\tau \phi(r)}u\|_{L^2(r^{-n}dx)}
+\tau^{\be_1} \|(\log r )^{-1} e^{-\tau \phi(r)}r \nabla u\|_{L^2(r^{-n}dx)} \nonumber \\
&\le C\|(\log r) e^{-\tau \phi(r)} r^2 (\LP u+ W\cdot \nabla u + V u)\|_{L^p(r^{-n} dx)} \nonumber \\
&+ 2 c C K \|(\log r)^{-1} e^{-\tau \phi(r)} r \nabla u \|_{L^2(r^{-n} dx)}
+ 2 c C M \|(\log r)^{-1} e^{-\tau \phi(r)} u\|_{L^q(r^{-n} dx)}.
\end{align*}
In this case, $\be_0\pr{2} = \frac{3}{2} - \frac{3n-2}{4s}$, $\be_1 = \frac{1}{2} - \frac{3n-2}{4s}$, and the lower bound on $s$ ensures that $\be_1 > 0$.
If we choose $\tau \ge \pr{2 c C K}^{\frac 1 {\be_1}}+ \pr{2 c C M}^{\frac 1 {\be_0\pr{2}}} $, then we may absorb the last two terms on the right into the lefthandside to reach the conclusion of the theorem.

{\bf Case 2: $t \in \pr{\frac{sn}{s+n}, s}$} \\
In this case, we choose $p = \frac{2s}{s+2}$ and $q = \frac{2st}{st+2t-2s}$.
As before, $p$ falls in the appropriate range and the bounds on $t$ ensure that $q \in \pr{2, \frac{2n}{n-2}}$.
Since $\frac{2p}{2-p} = s$ and $\frac{pq}{q-p} = t$, then upon substituting \eqref{hod2} and \eqref{hod3} into \eqref{triIneq}, we see that
\begin{align}
&\tau^{\be_0\pr{2}} \|(\log r)^{-1} e^{-\tau \phi(r)}u\|_{L^2(r^{-n}dx)}
+\tau^{\be_0\pr{q}} \|(\log r)^{-1} e^{-\tau \phi(r)}u\|_{L^q(r^{-n}dx)} \nonumber \\
&+\tau^{\be_1} \|(\log r )^{-1} e^{-\tau \phi(r)}r \nabla u\|_{L^2(r^{-n}dx)} \nonumber \\
&\le 2 C\|(\log r) e^{-\tau \phi(r)} r^2 (\LP u+ W\cdot \nabla u + V u)\|_{L^p(r^{-n} dx)} \nonumber \\
&+ 2 c C K \|(\log r)^{-1} e^{-\tau \phi(r)} r \nabla u \|_{L^2(r^{-n} dx)}
+ 2 c C M \|(\log r)^{-1} e^{-\tau \phi(r)} u\|_{L^q(r^{-n} dx)}.
\label{boundSub}
\end{align}
Now $\be_0\pr{q} = \frac 3 2 + \frac{n+2}{4s} - \frac n t$ and $\be_1 =\frac{1}{2}-\frac{3n-2}{4s}$ are both positive.
We again take $\tau \ge \pr{2 c C K}^{\frac 1 {\be_1}}+ \pr{2 c C M}^{\frac 1 {\be_0\pr{q}}} $ in order to absorb the last two terms into the lefthand side.

{\bf Case 3: $t \in \pb{n\pr{\frac{3n-2}{5n-2}}, \frac{sn}{s+n}}$} \\
This time we choose $p = \frac{2nt}{2n-2t+nt}$ and $q = \frac{2n}{n-2}$.
Since $\frac n 2 < n\pr{\frac{3n-2}{5n-2}}$ and $\frac{sn}{s+n} < n$, then $p \in \pr{\frac{2n}{n+2}, 2}$.
Since $\frac{2p}{2-p} = \frac{nt}{n-t} \le s$, then an application of H\"older's inequality shows that $\|W\|_{L^{\frac{2p}{2-p}}} \le c \|W\|_{L^{s}}$.
Noting that $\frac{pq}{q-p} = t$, we again show that \eqref{boundSub} holds.
Now $\be_1 = \frac 1 2 - \frac{3n-2}{4}\pr{\frac 1 t - \frac 1 n}  = \frac {5n-2}{4n} - \frac{3n-2}{4t}$ and the lower bound on $t$ implies that $\be_1 > 0$.
Since $\be_0\pr{q} = \be_1$, then $\be_0 > 0$ as well.
If we choose $\tau \ge \pr{4 c C K}^{\frac 1 {\be_1}} + \pr{4 c C M}^{\frac 1 {\be_0\pr{q}}}$, we can absorb the last two terms into the lefthand side to reach the conclusion.
\end{proof}

Now we consider the second order elliptic equation $\LP u+V(x)u=0$.
The proof is similar to the previous one.

\begin{theorem}
Let $t > \frac{4 n^2}{7n+2}$.
Assume that for some $M \ge 1$, $\norm{V}_{L^t\pr{B_{R_0}}} \le M$.
Then there exist constants $C_0$, $C_2$, and sufficiently small $R_0 < 1$  such that for any $u\in C^{\infty}_{0}(B_{R_0}(x_0)\setminus \set{x_0})$ and large positive constant
$$\tau \ge C_2 M^{\mu},$$
one has
\begin{align}
\tau^{\be_0} \|(\log r)^{-1} e^{-\tau \phi(r)}u\|_{L^2(r^{-n}dx)}
&\leq  C_0 \|(\log r ) e^{-\tau \phi(r)} r^2\pr{ \LP u + V u}\|_{L^p(r^{-n} dx)} ,
\label{main3}
\end{align}
where for any positive $\eps < \min\set{\frac{7t+\frac{2t}n-4n}{2}, \frac{(2t-n)(n+2)}{2n} }$, we
have $\disp \mu = \left\{\begin{array}{ll}
\frac{4t}{6t - \pr{3n-2}} & t > n \medskip \\
\frac{4t} {7t+\frac{2t}n-4n-\eps} & \frac{4n^2}{7n+2} < t \le n
\end{array}\right.$,
$\disp p = \left\{\begin{array}{ll}
\frac{2t}{t+2} & t > n \medskip \\
\frac{2n}{n + 2 -  \frac{2n\eps}{\pr{n+2}t} }& \frac{4n^2}{7n+2} < t
\le n
\end{array}\right.$,
and $\be_0 = \be_0\pr{2}$ as defined in Theorem \ref{Carlpq}.
Moreover, $C_0 = 2 C$, where $C$ is from Theorem \ref{Carlpq}, and $C_2 = C_2\pr{n,t, \eps}$.
\label{CarlpqV}
\end{theorem}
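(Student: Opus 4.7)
The plan is to adapt the proof of Theorem \ref{CarlpqVW} to the setting $W\equiv 0$. I would apply the Carleman estimate \eqref{mainCar} of Theorem \ref{Carlpq} twice, once at the exponent pair $(p,2)$ and once at $(p,q)$ for some $q \in \brac{2,\frac{2n}{n-2}}$, sum the two inequalities, and invoke the triangle inequality to replace $\LP u$ on the right by $\LP u + Vu$. After discarding the superfluous gradient term from the left side, this produces
\begin{align*}
&\tau^{\be_0\pr{2}}\norm{(\log r)^{-1}e^{-\tau\phi}u}_{L^2(r^{-n}dx)}+\tau^{\be_0\pr{q}}\norm{(\log r)^{-1}e^{-\tau\phi}u}_{L^q(r^{-n}dx)}\\
&\le 2C\norm{(\log r)e^{-\tau\phi}r^2(\LP u+Vu)}_{L^p(r^{-n}dx)}+2C\norm{(\log r)e^{-\tau\phi}r^2 Vu}_{L^p(r^{-n}dx)}.
\end{align*}
Exactly as in \eqref{hod3}, H\"older's inequality then bounds the last term by $c\norm{V}_{L^{pq/(q-p)}(B_{R_0})}\norm{(\log r)^{-1}e^{-\tau\phi}u}_{L^q(r^{-n}dx)}$, provided $q \ge p$ and the weight factor $(\log r)^2 r^{2+n/q-n/p}$ is bounded on $B_{R_0}$. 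It remains to choose $p$ and $q$ with $\frac{pq}{q-p}=t$, verify that the relevant $\be_0$ exponent is positive, and force $\tau \ge C_2 M^\mu$ to absorb the $Vu$ term into the left side.

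For Case 1, $t > n$, I would set $p = \frac{2t}{t+2}$ and $q = 2$, which places $p \in \pr{\frac{2n}{n+2},2}$ and gives $\frac{2p}{2-p} = t$. A direct substitution into $\be_0\pr{2} = \frac{3}{2} - \frac{(3n-2)(2-p)}{8p}$ yields $\be_0\pr{2} = \frac{6t-(3n-2)}{4t}$, whose reciprocal is exactly the stated $\mu = \frac{4t}{6t-(3n-2)}$, and the absorption succeeds for $\tau \ge \pr{2cCM}^{1/\be_0\pr{2}}$.

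For Case 2, $\frac{4n^2}{7n+2} < t \le n$, the exponent $\frac{2t}{t+2}$ no longer exceeds $\frac{2n}{n+2}$, so I would perturb slightly: set $p = \frac{2n}{n+2-\de}$ with $\de = \frac{2n\eps}{(n+2)t}$ and $q = \frac{2nt}{t(n+2-\de)-2n}$, which by construction satisfies $\frac{pq}{q-p}=t$ and, under the two upper bounds imposed on $\eps$, keeps $p \in \pr{\frac{2n}{n+2},2}$ and $q \in \brac{2,\frac{2n}{n-2}}$. The key algebraic step is to expand
\begin{equation*}
\be_0\pr{q} = \frac{3}{2} - \frac{(3n-2)(2-p)}{8p} - \frac{n(q-2)}{2q}
\end{equation*}
using the identities $\frac{2-p}{p} = \frac{2-\de}{n}$ and $\frac{n}{q} = \frac{n+2-\de}{2} - \frac{n}{t}$; collecting terms over the common denominator $4nt$ yields $\be_0\pr{q} = \frac{7nt + 2t - 4n^2 - n\eps}{4nt}$, so that $1/\be_0\pr{q} = \frac{4t}{7t + \frac{2t}{n} - 4n - \eps}$ is precisely the claimed $\mu$. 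The lower bound $t > \frac{4n^2}{7n+2}$ together with $\eps < \frac{7t + 2t/n - 4n}{2}$ keeps $\be_0\pr{q}$ strictly positive, and taking $\tau \ge \pr{2cCM}^{1/\be_0\pr{q}} + \pr{2cCM}^{1/\be_0\pr{2}}$ then absorbs both $Vu$ contributions. The main obstacle is the bookkeeping in this case: one must simultaneously verify that the perturbed pair $(p,q)$ lies in the admissible region of Theorem \ref{Carlpq} (which is what the second bound $\eps < \frac{(2t-n)(n+2)}{2n}$ ensures via the inequality $\de \le \frac{2(2t-n)}{t}$), that both $\be_0\pr{2}$ and $\be_0\pr{q}$ remain strictly positive, and that the algebraic simplification of $\be_0\pr{q}$ collapses to exactly the form prescribed by the stated $\mu$.
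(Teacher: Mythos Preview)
Your proposal is correct and follows essentially the same approach as the paper: sum two instances of the Carleman estimate \eqref{mainCar} at exponents $(p,2)$ and $(p,q)$, use the triangle inequality and H\"older to isolate and then absorb the $Vu$ term, with the identical choices $p=\frac{2t}{t+2}$, $q=2$ in Case~1 and $p=\frac{2n}{n+2-\de}$, $q=\frac{pt}{t-p}$ in Case~2. Your algebraic verification of $\be_0(q)$ in Case~2 is in fact more detailed than the paper's; the only slip is that $\eps<\frac{(2t-n)(n+2)}{2n}$ actually yields $\de<\frac{2t-n}{t}$ (not $\frac{2(2t-n)}{t}$), though either bound suffices to place $(p,q)$ in the admissible range.
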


\begin{proof}
As in the previous proof, if we add inequality \eqref{mainCar} from Theorem \ref{Carlpq} with $q = 2$ to the same inequality with $q$ arbitrary, we see that
\begin{align}
&\tau^{\be_0\pr{2}} \|(\log r)^{-1} e^{-\tau \phi(r)}u\|_{L^2(r^{-n}dx)}
+\tau^{\be_0\pr{q}} \|(\log r)^{-1} e^{-\tau \phi(r)}u\|_{L^q(r^{-n}dx)} \nonumber \\
&\leq 2 C \|(\log r ) e^{-\tau \phi(r)} r^2 \LP u\|_{L^p(r^{-n} dx)}  \nonumber \\
&\le 2 C\|(\log r) e^{-\tau \phi(r)} r^2 (\LP u + V u)\|_{L^p(r^{-n} dx)}
+ 2 C\|(\log r) e^{-\tau \phi(r)} r^2 V u \|_{L^p(r^{-n} dx)},
\label{triIneqV}
\end{align}
where we have used the triangle inequality to reach the last line.
We need to choose $p, q$ so that the last term on the right can be absorbed into the second term on the left while making $\tau$ minimally large.
As shown in Theorem \ref{CarlpqVW}, if $p \in \pb{\frac{2n}{n+2}, 2}$ and $q \ge p$, then
\begin{align}
\|(\log r) e^{-\tau \phi(r)} r^2 V u \|_{L^p(r^{-n} dx)}
&\le c \|V\|_{L^{\frac{pq}{q-p}}\pr{B_{R_0}}} \|(\log r)^{-1} e^{-\tau \phi(r)} u\|_{L^q(r^{-n} dx)}.
\label{hod4}
\end{align}
Again, we will work in different cases corresponding to different ranges of $t$.

{\bf Case 1: $t > n$} \\
Set $p = \frac{2t}{t+2}$ and $q = 2$.
Since $t > n$, then $p \in \left(\frac{2n}{n+2}, 2\right]$, as required.
As $\frac{pq}{q-p} = t$, then by combining \eqref{triIneqV} and \eqref{hod4}, we see that
\begin{align*}
&2\tau^{\be_0\pr{2}} \|(\log r)^{-1} e^{-\tau \phi(r)}u\|_{L^2(r^{-n}dx)}
 \nonumber \\
&\le 2 C\|(\log r) e^{-\tau \phi(r)} r^2 (\LP u + V u)\|_{L^p(r^{-n} dx)}
+ 2 c C M \|(\log r)^{-1}e^{-\tau \phi(r)} u\|_{L^q(r^{-n}dx)}.
\end{align*}
Since $\be_0\pr{2} = \frac 3 2 - \frac{3n-2}{4t} > 0$, if we ensure that $\tau \ge \pr{2 c C M}^{\frac 1 {\be_0}}$, then the second term on the right may be absorbed into the left and the conclusion of the theorem follows.

{\bf Case 2:} $t \in\pb{\frac{4n^2}{7n+2}, n}$. \\
Choose $\eps \in \pr{0, \min\set{\frac{7t+\frac{2t}n-4n}{2},
\frac{(2t-n)(n+2)}{2n}}}$ to be arbitrarily small. Set $p =
\frac{2n}{n + 2 - \frac{2n\eps}{\pr{n+2}t} }$ and take $q =
\frac{pt}{t-p}$. Since $\eps
<\frac{(2t-n)(n+2)}{n}<\frac{(n+2)t}{n}$, then $p \in
\pr{\frac{2n}{n+2}, 2}$ and $q \in \pr{2, \frac{2n}{n-2}}$. As
$\frac{pq}{q-p} = t$, then upon substituting \eqref{hod4} into
\eqref{triIneqV}, we see that
\begin{align*}
&\tau^{\be_0\pr{2}} \|(\log r)^{-1} e^{-\tau \phi(r)}u\|_{L^2(r^{-n}dx)}
+\tau^{\be_0\pr{q}} \|(\log r)^{-1} e^{-\tau \phi(r)}u\|_{L^q(r^{-n}dx)} \nonumber \\
&\le 2 C\|(\log r) e^{-\tau \phi(r)} r^2 (\LP u + V u)\|_{L^p(r^{-n} dx)}
+ 2 c C M \|(\log r)^{-1} e^{-\tau \phi(r)} u\|_{L^q(r^{-n} dx)}
\label{triIneqV}
\end{align*}
Now $\be_0\pr{q} = \frac {7t+\frac{2t}n-4n-\eps}{4t} >0$ (since $2\eps <7t+\frac{2t}n-4n$) and if we choose $\tau \ge \pr{2 c C M}^{\frac 1 {\be_0\pr{q}}}$, the conclusion of the theorem follows.
\end{proof}

\section{Proof of $L^p\to L^{q}$ Carleman estimates }
\label{CarlProofs}

In this section, we prove the crucial tool in the whole paper, i.e. the $L^p - L^q$ Carleman estimate stated in Theorem \ref{Carlpq}.
To prove our Carleman estimate, we first establish some intermediate Carleman estimates for first-order operators.

We introduce polar coordinates in $\mathbb R^n\backslash \{0\}$ by setting $x=r\omega$, with $r=|x|$ and $\omega=(\omega_1,\cdots,\omega_n)\in S^{n-1}$.
Further, we use a new coordinate $t=\log r$.
Then
$$ \frac{\partial }{\partial x_j}=e^{-t}(\omega_j\partial_t+  \Omega_j), \quad 1\leq j\leq n, $$
where $\Omega_j$ is a vector field in $S^{n-1}$.
It is well known that vector fields $\Omega_j$ satisfy
$$ \sum^{n}_{j=1}\omega_j\Omega_j=0 \quad \mbox{and} \quad
\sum^{n}_{j=1}\Omega_j\omega_j=n-1.$$
In the new coordinate system, the Laplace operator takes the form
\begin{equation}
e^{2t} \LP=\partial^2_t u+(n-2)\partial_t+\LP_{\omega},
\label{laplace}
\end{equation}
where $\disp \LP_\omega=\sum_{j=1}^n \Omega^2_j$ is the Laplace-Beltrami operator on $S^{n-1}$.
The eigenvalues for $-\LP_\omega$ are $k(k+n-2)$, $k\in \mathbb{N}$, where $\mathbb{N}$ denotes the set of nonnegative integers.
The corresponding eigenspace is $E_k$, the space of spherical harmonics of degree $k$.
It follows that
$$\| \LP_\omega v\|^2_{L^2(dtd\omega)}=\sum_{k\geq 0} k^2(k+n-2)^2\| v_k\|^2_{L^2(dtd\omega)}$$
and
\begin{equation}
\sum_{j=1}^n\| \Omega_j v\|^2_{L^2(dtd\omega)}
=\sum_{k\geq 0} k(k+n-2)\|v_k\|^2_{L^2(dtd\omega)},
\label{lll}
\end{equation}
where $v_k$ denotes the projection of $v$ onto $E_k$.
Here $\|\cdot\|_{L^2(dtd\omega)}$ denotes the $L^2$ norm on $(-\infty, \infty)\times S^{n-1}$.

 Let
$$\Lambda=\sqrt{\frac{(n-2)^2}{4}-\LP_\omega}.$$ The operator $\Lambda$ is a
first-order elliptic pseudodifferential operator on $L^2(S^{n-1})$.
The eigenvalues for the operator $\Lambda$ are $k+\frac{n-2}{2}$, with corresponding eigenspace $E_k$.
That is, for any $v\in C^\infty_0(S^{n-1})$,
\begin{equation}
\pr{\Lambda - \frac{n-2}{2}} v= \sum_{k\geq 0}k P_k v,
\label{ord}
\end{equation}
where $P_k$ is the projection operator from $L^2(S^{n-1})$ onto $E_k$.
We remark that the projection operator, $P_k$, acts only on the angular variables.
In particular, $P_k v\pr{t, \om} = P_k v\pr{t, \cdot} \pr{\om}$.

Now define
\begin{equation} L^\pm=\partial_t+\frac{n-2}{2}\pm \Lambda.
\label{use}
\end{equation} From the equation (\ref{laplace}), it follows that
\begin{equation*}
e^{2t} \LP=L^+L^-=L^-L^+.
\end{equation*}

Recall that we introduced the weight function
$$\phi(r)=\log r+\log(\log r)^2.$$
With $r=e^t$, define the weight function in terms of $t$ to be
$$\varphi(t)=\phi(e^t)=t+\log t^2.$$
We are only interested in those values of $r$ that are sufficiently small.
Since $r\to 0$ if and only if $t\to-\infty$ then, in terms of the new coordinate $t$, we study the case when $t$ is sufficiently close to $-\infty$.

We first establish an $L^2- L^2$ Carleman inequality for the operator $L^+$.

\begin{lemma}
If $\abs{t_0}$ is sufficiently large, then for any $v \in C^\iny_0\pr{\pr{-\iny, -\abs{t_0}} \times S^{n-1}}$, we have that
\begin{eqnarray}
\tau \norm{t^{-1} e^{-\tau \varphi(t)}v}_{L^2(dtd\omega )}
&+&\norm{t^{-1}e^{-\tau \varphi(t)} \partial_t v}_{L^2(dtd\omega )}
+\sum_{j=1}^n \norm{t^{-1}e^{-\tau \varphi(t)} \Omega_j v }_{L^2(dtd\omega )}   \nonumber \medskip\\
&\leq& C\norm{t^{-1} e^{-\tau \varphi(t)} L^+v}_{L^2(dtd\omega )},
\label{cond}
 \end{eqnarray}
\label{Car22}
where $C$ is a universal constant.
\end{lemma}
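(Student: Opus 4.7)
My plan is to conjugate by the weight $e^{-\tau\varphi(t)}$ and then extract positivity via integration by parts, exploiting the self-adjointness of the angular operator $B := \tau\varphi'(t) + \tfrac{n-2}{2} + \Lambda$ on $L^2(S^{n-1})$ for each fixed $t$. First, I would set $w(t,\omega) := e^{-\tau\varphi(t)}\, v(t,\omega)$ and observe that since the weight depends only on $t$, $e^{-\tau\varphi}\Omega_j v = \Omega_j w$, while $e^{-\tau\varphi}\partial_t v = \partial_t w + \tau\varphi'(t)\, w$ with $|\varphi'|$ bounded on $\{t<-|t_0|\}$ once $|t_0|\geq 4$. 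Combining these with the triangle inequality, \eqref{cond} is equivalent to
\begin{equation*}
\tau \|t^{-1} w\| + \|t^{-1} \partial_t w\| + \sum_{j=1}^n \|t^{-1} \Omega_j w\| \le C\, \|t^{-1} L^+_\tau w\|,
\end{equation*}
where $\|\cdot\|$ denotes $\|\cdot\|_{L^2(dtd\omega)}$ and $L^+_\tau := \partial_t + B$ is the weight-conjugated form of the operator \eqref{use}.

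The core step is an exact identity for $\|t^{-1} L^+_\tau w\|^2 = \|t^{-1}\partial_t w\|^2 + \|t^{-1} B w\|^2 + 2\,\mathrm{Re}\int t^{-2}(\partial_t w)\overline{Bw}\,dtd\omega$. I would set $J(t) := \int_{S^{n-1}} w\,\overline{B w}\,d\omega$, which is real by self-adjointness of $B$, and compute $J'(t) = 2\,\mathrm{Re}\int_{S^{n-1}}(\partial_t w)\,\overline{Bw}\,d\omega + \tau\varphi''(t)\int_{S^{n-1}}|w|^2 d\omega$ via the commutator identity $[\partial_t,B] = \tau\varphi''(t)$. Multiplying by $t^{-2}$ and integrating in $t$ (boundary terms vanish by compact support), together with $\varphi''(t) = -2t^{-2}$, yields
\begin{equation*}
\|t^{-1} L^+_\tau w\|^2 = \|t^{-1}\partial_t w\|^2 + \|t^{-1} Bw\|^2 + 2\!\int J(t)\,t^{-3}\,dt + 2\tau\!\int |w|^2\,t^{-4}\,dt\,d\omega.
\end{equation*}
The last term is non-negative; and for $\tau\ge 1$ and $|t_0|$ large enough that $\tau\varphi'(t) + \tfrac{n-2}{2} \ge \tau/2$, the orthogonal expansion of $B = [\tau\varphi' + \tfrac{n-2}{2}] + \Lambda$ combined with $\Lambda \ge \tfrac{n-2}{2}>0$ (which makes its cross term with the multiplication part non-negative) gives $\|t^{-1}Bw\|^2 \gtrsim \tau^2 \|t^{-1}w\|^2 + \|t^{-1}\Lambda w\|^2$.

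The main obstacle is the sign-indefinite term $2\int J(t) t^{-3}dt$: because $t<0$ one has $t^{-3}<0$, and $J$ itself scales with $\tau$ through $B$. I would exploit the bound $|t|^{-3} \le |t_0|^{-1} |t|^{-2}$ valid on $\{t<-|t_0|\}$; applied to the pointwise decomposition $J(t) = \int_{S^{n-1}} [\tau\varphi'(t) + \tfrac{n-2}{2}]|w|^2 d\omega + \int_{S^{n-1}} (\Lambda w)\bar w\,d\omega$ together with Cauchy--Schwarz, this produces $\bigl|2\int J(t) t^{-3} dt\bigr| \le C |t_0|^{-1}(\tau \|t^{-1}w\|^2 + \|t^{-1}\Lambda w\|^2)$. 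Choosing $|t_0|$ sufficiently large absorbs this loss into $\|t^{-1}Bw\|^2$, yielding
\begin{equation*}
\|t^{-1} L^+_\tau w\|^2 \;\gtrsim\; \tau^2 \|t^{-1} w\|^2 + \|t^{-1}\partial_t w\|^2 + \|t^{-1}\Lambda w\|^2.
\end{equation*}
Finally, the spherical-harmonic expansion of $w$ together with \eqref{lll} and the pointwise inequality $(k+\tfrac{n-2}{2})^2 \ge k(k+n-2)$ yields $\|t^{-1}\Lambda w\|^2 \ge \sum_j \|t^{-1}\Omega_j w\|^2$; taking square roots and using $\sqrt{a+b+c} \ge \tfrac{1}{\sqrt{3}}(\sqrt a + \sqrt b + \sqrt c)$ completes the proof of \eqref{cond}.
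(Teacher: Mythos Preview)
Your proof is correct and follows a genuinely different route from the paper's. The paper works directly in the spherical-harmonic basis: it expands $\|t^{-1}L^+_\tau u\|^2$ as a sum over $k$, integrates by parts to produce negative terms of the form $-2\sum_k(k+n-2)\||t|^{-3/2}u_k\|^2$ and $-2\tau\||t|^{-3/2}u\|^2$, and then exploits the algebraic lower bound
\[
\bigl[k+n-2+\tau(1+2t^{-1})\bigr]^2 \;\ge\; \tfrac{3\tau^2}{4} + 4\tau^2 t^{-2} + k(k+n-2) - 2t^{-1}(k+n-2) - 2\tau t^{-1},
\]
whose last two terms are tailored to \emph{exactly cancel} the negative integration-by-parts contributions (using $t<0$). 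Your argument instead packages the angular part into the self-adjoint operator $B$, derives the clean identity via the commutator $[\partial_t,B]=\tau\varphi''$, and then treats the sign-indefinite term $2\int J(t)\,t^{-3}\,dt$ \emph{perturbatively}, absorbing it into $\|t^{-1}Bw\|^2$ by means of the smallness factor $|t|^{-3}\le |t_0|^{-1}|t|^{-2}$.

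Both approaches are valid. The paper's exact-cancellation computation yields slightly sharper constants and does not visibly require $\tau\ge 1$, but is tied to the specific form of $\varphi$ and to the spherical-harmonic calculus. Your commutator formulation is cleaner and more robust: it only uses $\varphi''=-2t^{-2}$, self-adjointness and positivity of $\Lambda$, and the support condition $|t|\ge|t_0|$; it would adapt with little change to other weights with $\varphi''<0$. One small point worth making explicit is that your absorption of $C|t_0|^{-1}\tau\|t^{-1}w\|^2$ into $\tfrac{\tau^2}{4}\|t^{-1}w\|^2$ uses $\tau\ge 1$; this is harmless since the lemma is only applied under $\tau>1$ in Theorem~\ref{Carlpq}.
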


\begin{proof}
Recall that $L^+=\partial_t+\Lambda+\frac{n-2}{2}$.
Let $v=e^{\tau\varphi(t)}u$.
A computation shows that
\begin{align}
L^+_{\tau} u
&:= e^{- \tau \vp\pr{t}} L^+ v
= e^{- \tau \vp\pr{t}} \pr{\partial_t+\Lambda+\frac{n-2}{2}} e^{\tau\varphi(t)}u \nonumber \\
&= \partial_t u
+ \Lambda u
+  \frac{n-2}{2} u
+ \tau \vp^\prime\pr{t} u \nonumber \\
&=  \partial_t u + \sum_{k \ge 0} \pr{k+ n-2} u_k + \tau \pr{1 + 2 t^{-1}} u,
\label{Lpt}
\end{align}
where the last line follows from an application of \eqref{ord} with $u_k = P_k u$.
It is clear that
\begin{eqnarray}
\norm{t^{-1}e^{-\tau\varphi(t)}L^+v}^2_{L^2(dtd\omega)} &=&
\iint t^{-2}\pr{\partial_t u+\Lambda u+\tau u+2\tau t^{-1} u +\frac{n-2}{2}u }^2 dtd\omega \nonumber \\
&=& \iint t^{-2} \abs{\partial_t u}^2 dtd\om +  2 \iint t^{-2}
\del_t u \sum_{k \ge 0} \pr{k+ n-2} u_k \, dt d\om
\nonumber \\
&+&  2 \tau \iint t^{-2}  \pr{1 + 2 t^{-1}}
u \del_t u \, dt d\om \nonumber\\
&+& \iint t^{-2}\pr{\Lambda u+\tau u+2\tau t^{-1} u +\frac{n-2}{2}
u}^2 dtd\om .
\label{comb1}
\end{eqnarray}
 Keeping in mind that $t < 0$, integration by parts then gives
\begin{eqnarray}
2 \iint t^{-2} \del_t u \sum_{k \ge 0} \pr{k+ n-2} u_k \, dt d\om
&=&  \iint t^{-2} \sum_{k \ge 0} \pr{k+ n-2} \del_t \abs{u_k}^2 \, dt d\om \nonumber \\
&=& - 2 \sum_{k \ge 0} \pr{k+ n-2} \norm{t^{-3/2} u_k}^2_{L^2\pr{ dt
d\om}}, \label{ana1}
\end{eqnarray}
and
\begin{eqnarray}
 2 \tau \int t^{-2} \del_t u \pr{1 + 2 t^{-1}} u
&=& \tau \int t^{-2} \pr{1 + 2 t^{-1}} \del_t\abs{u}^2 dt d\om \nonumber\\
&=& - 2 \tau \norm{ t^{-3/2}u}^2_{L^2\pr{dt d\om}} + 6 \tau \norm{
t^{-2} u}^2_{L^2\pr{d t d\om}} . \label{ana2}
\end{eqnarray}
By the definition of $\Lambda$, we have
\begin{eqnarray}
&&\norm{ t^{-1}\pr{\Lambda +\tau +2\tau t^{-1} +\frac{n-2}{2}} u
}^2_{L^2(dtd\omega)} \nonumber \\ &&=  \sum_{k \ge 0} \iint
t^{-2}\brac{k+ n-2 + \tau \pr{1 + 2 t^{-1}}}^2 \abs{u_k}^2 dt d\om.
\end{eqnarray}
Since
\begin{align*}
\brac{k+n -2 + \tau+2\tau t^{-1}}^2
\geq \frac{3\tau^2}{4} +4(\tau t^{-1})^2 +k(k+n-2) - 2 t^{-1}\pr{k + n - 2} - 2 \tau t^{-1}
\end{align*}
for $|t_0|$ large enough, then
\begin{align*}
&\norm{ t^{-1}\pr{\Lambda +\tau +\tau t^{-1} +\frac{n-2}{2}}
u}^2_{L^2(dtd\omega)} \ge
\frac{3\tau^2}{4}\norm{t^{-1}u}^2_{L^2(dtd\omega)}
+ 4\tau^2 \norm{t^{-2}u}^2_{L^2(dtd\omega)} \\
&+\sum_{k\geq 0}  k(k+n-2) \norm{t^{-1}u_k}^2_{L^2(dtd\omega)}
+ 2 \sum_{k\geq0} \pr{k + n - 2} \norm{t^{-3/2}u_k}^2_{L^2(dtd\omega)}
+ 2 \tau \norm{t^{-3/2}u_k}^2_{L^2(dtd\omega)}.
\end{align*}
Combining the previous estimate with \eqref{comb1}, \eqref{ana1} and \eqref{ana2}, it follows
that
\begin{align*}
\norm{t^{-1} e^{-\tau\varphi(t)}L^+v}^2_{L^2(dtd\omega)}
&\ge \norm{t^{-1} \partial_t u }^2_{L^2(dtd\omega)}
+ \frac{\tau^2}{2} \norm{t^{-1}u}^2_{L^2(dtd\omega)}
+ 2\tau^2 \norm{t^{-2}u}^2_{L^2(dtd\omega)} \\
&+ \sum_{k\geq 0}  k(k+n-2)\norm{t^{-1}u_k}^2_{L^2(dtd\omega)}
+ \frac{\tau^2}{4} \norm{t^{-1}u}^2_{L^2(dtd\omega)} .
\end{align*}
Recalling that $u = e^{- \tau \vp\pr{t}} v$, we see by the triangle inequality that
\begin{align*}
\norm{t^{-1}\del_t u }^2_{L^2(dtd\omega)}
+  \frac{\tau^2}{2} \norm{t^{-1} u}^2_{L^2(dtd\omega)}
+  2\tau^2 \norm{t^{-2}  u }^2_{L^2(dtd\omega)}
&\ge \frac 1 5 \norm{t^{-1}e^{- \tau \vp\pr{t}}\partial_t v }^2_{L^2(dtd\omega)} .
\end{align*}
Substituting this expression into the previous inequality, and using an application of \eqref{lll} gives
\begin{align*}
C \norm{t^{-1} e^{-\tau\varphi(t)}L^+v}^2_{L^2(dtd\omega)}
&\ge \tau^2 \norm{t^{-1}e^{- \tau \vp\pr{t}} v}^2_{L^2(dtd\omega)}
+ \norm{t^{-1}e^{- \tau \vp\pr{t}}\partial_t v }^2_{L^2(dtd\omega)} \\
&+ \sum_{j=1}^n\| t^{-1} e^{- \tau \vp\pr{t}} \Omega_j  v\|^2_{L^2(dtd\omega)},
\end{align*}
since $\Om_j$ acts only on the angular variables.
This implies \eqref{cond}.
\end{proof}

Using a similar process, we also establish an $L^2 - L^2$ Carleman estimate for $L^-$.
Notice that the power on $\tau$ is different here from above.

\begin{lemma}
For every $v \in C^\iny_c\pr{(-\infty, \ t_0)\times S^{n-1}}$, it
holds that
\begin{equation}
\|t^{-{1}} e^{-\tau \varphi(t)} v\|_{L^2(dtd\omega)} \leq C
\tau^{-\frac 1 2} \| e^{-\tau \varphi(t)} L^- v\|_{L^2(dtd\omega)},
\label{key-q22}
\end{equation}
\label{CarL-qq}
where $C$ is a universal constant.
\end{lemma}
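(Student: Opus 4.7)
The plan is to mimic the structure of the proof of Lemma \ref{Car22}, but exploit the fact that $L^-$ diagonalizes nicely over the spherical harmonic decomposition. As in the proof of Lemma \ref{Car22}, I will set $v=e^{\tau\varphi(t)}u$ and compute the conjugated operator
$$L^-_\tau u := e^{-\tau\varphi(t)} L^- v = \partial_t u - \Lambda u + \frac{n-2}{2}u + \tau\varphi'(t) u.$$
Writing $u=\sum_{k\ge 0} u_k$ with $u_k=P_k u$, and using that $\Lambda$ has eigenvalue $k+\frac{n-2}{2}$ on $E_k$ (see \eqref{ord}), one sees that $L^-_\tau$ reduces on each mode to the first-order scalar operator
$$L^-_\tau u_k = \partial_t u_k + g_k(t)\, u_k, \qquad g_k(t):= \tau\varphi'(t) - k.$$
Because the spherical harmonics are orthogonal, one has $\|L^-_\tau u\|_{L^2(dtd\omega)}^2 = \sum_k \|L^-_\tau u_k\|_{L^2(dtd\omega)}^2$, so it suffices to prove the estimate mode-by-mode.

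For each fixed $k$, I will expand $|L^-_\tau u_k|^2 = |\partial_t u_k|^2 + g_k \,\partial_t |u_k|^2 + g_k^2|u_k|^2$ and integrate by parts in the $t$-variable (justified by the compact support of $v$ in $t$). The boundary terms vanish, and the cross term produces
$$\int g_k(t)\, \partial_t |u_k|^2\, dt = -\int g_k'(t)\,|u_k|^2\, dt = -\int \tau\,\varphi''(t)\,|u_k|^2\, dt.$$
Since $\varphi(t)=t+\log t^2$ gives $\varphi''(t) = -2t^{-2}$, this cross term equals $2\tau\int t^{-2}|u_k|^2\, dt$, which is positive. Dropping the non-negative terms $\|\partial_t u_k\|^2$ and $\|g_k u_k\|^2$ yields
$$\|L^-_\tau u_k\|_{L^2(dtd\omega)}^2 \geq 2\tau\, \|t^{-1} u_k\|_{L^2(dtd\omega)}^2.$$

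Summing this inequality over $k$ and invoking orthogonality gives $\|L^-_\tau u\|_{L^2}^2 \geq 2\tau\|t^{-1} u\|_{L^2}^2$, and translating back via $u = e^{-\tau\varphi(t)}v$ yields \eqref{key-q22} with $C = 1/\sqrt 2$. There is no real obstacle in this argument, but the mildly subtle point worth highlighting is the contrast with Lemma \ref{Car22}: the positive commutator on $L^+$ produces a first-order gain (leading to the factor $\tau$ on the left of \eqref{cond}), while for $L^-$ only the second-order contribution $-\tau\varphi''$ survives as a definite positive term, which is what forces the weaker power $\tau^{-1/2}$ instead of $\tau^{-1}$. This loss is unavoidable because $L^-$ has a zero mode (the $k=0$ harmonic) where the symbol vanishes, so no $k$-based coercivity is available in that mode.
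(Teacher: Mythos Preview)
Your proof is correct and follows essentially the same approach as the paper: conjugate by $e^{\tau\varphi}$, expand the square, integrate by parts in $t$ so that the cross term produces $-\tau\varphi''(t)=2\tau t^{-2}$, and drop the remaining nonnegative terms. The only cosmetic difference is that you organize the computation mode-by-mode via the spherical harmonic decomposition, whereas the paper carries it out globally (using that the cross term $2\iint\partial_t u\sum_k(-k)u_k$ integrates to zero); both yield the identical bound $\|L^-_\tau u\|_{L^2}^2\ge 2\tau\|t^{-1}u\|_{L^2}^2$.
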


\begin{proof}
Recall that $L^-=\partial_t+\frac{n-2}{2}-\Lambda$.
Let $v=e^{\tau\varphi(t)}u$.
Direct computations show that
\begin{align*}
L^-_{\tau} u &:= e^{- \tau \vp\pr{t}} L^- v
= e^{- \tau \vp\pr{t}} \pr{\partial_t+\frac{n-2}{2}-\Lambda} e^{\tau\varphi(t)}u \nonumber \\
&= \partial_t u -\Lambda u +  \frac{n-2}{2} u
+ \tau \vp^\prime\pr{t} u \nonumber \\
&=  \partial_t u -\sum_{k \ge 0} k u_k + \tau \pr{1 + 2 t^{-1}} u,
\end{align*}
where the last line follows from the application of \eqref{ord} with
$u_k = P_k u$.
It is true that
\begin{eqnarray*}
\|e^{-\tau\varphi(t)}L^-v\|^2_{L^2(dtd\omega)} &=&
\iint \brac{\partial_t u+\pr{\frac{n-2}{2}-\Lambda} u+\tau u+2\tau t^{-1} u  }^2 dtd\omega \nonumber \\
&=& \iint  \abs{\partial_t u}^2 dtd\om
+  \iint \brac{\pr{\frac{n-2}{2}-\Lambda} u+\tau u+2\tau t^{-1} u }^2
dtd\om  \\
&+&  2 \iint  \del_t u \sum_{k
\ge 0} (-k) u_k \, dt d\om
+  2 \tau \iint   \pr{1 + 2 t^{-1}}
u \del_t u \, dt d\om .
\end{eqnarray*}
Integration by parts then gives
\begin{eqnarray*}
2 \iint  \del_t u \sum_{k \ge 0} \pr{-k} u_k \, dt d\om =0
\end{eqnarray*}
and
\begin{eqnarray*}
 2 \tau \int  \del_t u \pr{1 + 2 t^{-1}} u
= \tau \int  \pr{1 + 2 t^{-1}} \del_t\abs{u}^2 dt d\om 
= 2 \tau \norm{ t^{-1}u}^2_{L^2\pr{dt d\om}}.
\end{eqnarray*}
Since
\begin{eqnarray*}
&&\|\brac{\pr{\frac{n-2}{2}-\Lambda} u+\tau u+2\tau t^{-1} u
}\|^2_{L^2(dtd\omega)}
\geq 0.
\end{eqnarray*}
then it follows that
\begin{align*}
\norm{ e^{-\tau\varphi(t)}L^-v}^2_{L^2(dtd\omega)} \ge 2 \tau \norm{t^{-1}u}^2_{L^2(dtd\omega)} .
\end{align*}
Recalling that $u = e^{- \tau \vp\pr{t}} v$,
 this implies the estimate \eqref{key-q22}.
\end{proof}

Our next task is to establish $L^p - L^2$ Carleman estimates for the operator $L^-$.
For these results, we require the following lemma which relies on the eigenfunction estimates of Sogge \cite{Sog86}.

\begin{lemma}
Let $N, M \in \N$ and let $\set{c_k}$ be a sequence of numbers such that $\abs{c_k} \le 1$ for all $k$.
For any $v \in L^2\pr{S^{n-1}}$ and all $\frac{2n}{n+2}\leq p\leq 2$, we have
\begin{align}
\|\sum^M_{k=N} c_k P_k v\|_{L^2(S^{n-1})} &\leq C
\brac{M^{\frac{n-2}{2}} \pr{ \sum^M_{k=N}
|c_k|^2}^{\frac{n}{2}}}^{\frac{1}{p}-\frac{1}{2}}\|
v\|_{L^p(S^{n-1})},
\label{haha}
\end{align}
where $C$ depends on $n$ and $p$.
\label{upDown}
\end{lemma}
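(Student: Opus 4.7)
The plan is to establish the inequality by Riesz--Thorin interpolation between two endpoint bounds for the linear operator $T : L^p(S^{n-1}) \to L^2(S^{n-1})$ defined by $Tv := \sum_{k=N}^M c_k P_k v$, treating the sequence $\{c_k\}$ as fixed.

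First, I would handle the trivial endpoint $p=2$. Since the $P_k$ are orthogonal projections onto mutually orthogonal eigenspaces $E_k$, Parseval's identity gives
\[
\|Tv\|_{L^2(S^{n-1})}^2 \;=\; \sum_{k=N}^M |c_k|^2\,\|P_k v\|_{L^2(S^{n-1})}^2 \;\le\; \sum_k \|P_k v\|_{L^2(S^{n-1})}^2 \;\le\; \|v\|_{L^2(S^{n-1})}^2,
\]
using the assumption $|c_k|\le 1$. This yields $\|T\|_{L^2\to L^2}\le 1$ with no dependence on $M$ or on $\sum|c_k|^2$, which is consistent with the statement at $p=2$ (where the exponent $\tfrac{1}{p}-\tfrac{1}{2}$ vanishes).

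Next, I would treat the opposite endpoint $p_0:=\tfrac{2n}{n+2}$, whose dual is $p_0'=\tfrac{2n}{n-2}$, and show
\[
\|Tv\|_{L^2(S^{n-1})} \;\le\; C\,M^{\frac{n-2}{2n}} \Bigl(\sum_{k=N}^M |c_k|^2\Bigr)^{1/2}\|v\|_{L^{p_0}(S^{n-1})}.
\]
The idea is to dualize: $\|Tv\|_{L^2}=\sup_{\|w\|_{L^2}=1}|\langle v,T^*w\rangle|$, where $T^*w=\sum_{k=N}^M \overline{c_k}\,P_k w$. H\"older's inequality bounds $|\langle v,T^*w\rangle|$ by $\|v\|_{L^{p_0}}\|T^*w\|_{L^{p_0'}}$. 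Then apply the triangle inequality and Sogge's eigenfunction estimate on $S^{n-1}$ at the critical exponent $q_c=\tfrac{2n}{n-2}$, which yields $\|P_k w\|_{L^{p_0'}}\le C\,k^{(n-2)/(2n)}\|P_k w\|_{L^2}$. Cauchy--Schwarz then gives
\[
\|T^*w\|_{L^{p_0'}} \;\le\; C\Bigl(\sum_k |c_k|^2 k^{(n-2)/n}\Bigr)^{\!1/2}\Bigl(\sum_k \|P_k w\|_{L^2}^2\Bigr)^{\!1/2},
\]
and bounding $k\le M$ together with Parseval $\sum_k \|P_k w\|_{L^2}^2\le \|w\|_{L^2}^2$ produces the advertised factor $M^{(n-2)/(2n)}(\sum|c_k|^2)^{1/2}$.

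Finally, I would invoke Riesz--Thorin interpolation on $T$ between these two endpoints. Parametrizing $\tfrac{1}{p}=\tfrac{1-\theta}{2}+\theta\,\tfrac{n+2}{2n}$ gives $\theta=n\bigl(\tfrac{1}{p}-\tfrac{1}{2}\bigr)$, and interpolation yields
\[
\|Tv\|_{L^2}\;\le\; 1^{1-\theta}\Bigl[C\,M^{\frac{n-2}{2n}}\Bigl(\sum|c_k|^2\Bigr)^{1/2}\Bigr]^{\theta}\|v\|_{L^p} \;=\; C\Bigl[M^{\frac{n-2}{2}}\Bigl(\sum|c_k|^2\Bigr)^{n/2}\Bigr]^{\frac{1}{p}-\frac{1}{2}}\|v\|_{L^p},
\]
which is exactly \eqref{haha}. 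The main subtlety is matching the exponent in Sogge's estimate at the critical index $q_c$ with the scaling $\theta=n(\tfrac{1}{p}-\tfrac{1}{2})$ so that the interpolation reproduces the combined factor of $M^{(n-2)/2}$ and $(\sum|c_k|^2)^{n/2}$ raised to $\tfrac{1}{p}-\tfrac{1}{2}$; the Cauchy--Schwarz/triangle-inequality step is tailored precisely to deliver this shape, and then Riesz--Thorin does the rest.
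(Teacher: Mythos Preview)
Your proposal is correct and follows essentially the same route as the paper: the trivial $L^2\to L^2$ bound from orthogonality, the $L^{2n/(n+2)}\to L^2$ endpoint via Sogge's eigenfunction estimate, and then interpolation between the two. The only cosmetic difference is that the paper obtains the $p_0$ endpoint by writing $\|\sum c_k P_k v\|_{L^2}^2=\sum|c_k|^2\langle P_k v,v\rangle$ and applying H\"older together with the $L^{p_0}\to L^{p_0'}$ Sogge bound directly, whereas you dualize and use the $L^2\to L^{p_0'}$ bound followed by Cauchy--Schwarz; the two arguments are equivalent.
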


\begin{proof}
Sogge's \cite{Sog86} eigenfunction estimates state that there is a constant $C$, depending only on $n \ge 3$, such that for any $v\in L^2(S^{n-1})$,
\begin{equation}
\| P_k v\|_{L^{\frac{2n}{n-2}}(S^{n-1})}
\leq Ck^{1-\frac{2}{n}} \|v\|_{L^{\frac{2n}{n+2}}(S^{n-1})}.
\label{sogg}
\end{equation}
Recall that $P_k v = v_k$ is the projection of $v$ onto the space of spherical harmonics of degree $k$.
By orthogonality, H\"older's inequality, and \eqref{sogg},
\begin{eqnarray}
\| P_k v\|^2_{L^{2}(S^{n-1})}
&\leq & \| P_k v\|_{L^{\frac{2n}{n-2}}(S^{n-1})} \|  v\|_{L^{\frac{2n}{n+2}}(S^{n-1})} 
\leq  Ck^{1-\frac{2}{n}}\|  v\|^2_{L^{\frac{2n}{n+2}}(S^{n-1})}.
\label{up}
\end{eqnarray}
It is obvious that
\begin{equation}
 \| P_k v\|_{L^{2}(S^{n-1})} \leq \|v\|_{L^{2}(S^{n-1})}.
 \label{stay}
\end{equation}

Interpolating \eqref{up} and \eqref{stay} gives that
\begin{align}
\| P_k v\|_{L^{2}(S^{n-1})} &\leq C k^{\frac{(n-2)(2-p)}{4p}}
\|v\|_{L^{p}(S^{n-1})} \label{indu}
\end{align}
for all $\frac{2n}{n+2}\leq p\leq 2$.

Now we consider a more general case of the previous inequality.
Let $\{c_k\}$ be a sequence of numbers with $|c_k|\leq 1$.
For all $N\leq M$, by H\"older's inequality, it follows that
\begin{equation*}
\|\sum^{M}_{k=N} c_k P_k v\|^2_{L^2(S^{n-1})}
\leq C\pr{\sum^M_{k=N} |c_k|^2 \|P_k v\|_{L^{\frac{2n}{n-2}}(S^{n-1})}}\|v\|_{L^{\frac{2n}{n+2}}(S^{n-1})}.
\end{equation*}
An application of Sogge's estimate \eqref{sogg} shows that
\begin{equation*}
\|\sum^M_{k=N} c_k P_k v\|_{L^2(S^{n-1})}
\leq C M^\frac{n-2}{2n} \pr{\sum^M_{k=N} |c_k|^2}^{\frac{1}{2}} \|v\|_{L^{\frac{2n}{n+2}}(S^{n-1})}.
\end{equation*}
For any sequence $\set{d_k}$ such that each $|d_k|\leq 1$, it is true that
\begin{equation}
\|\sum^M_{k=N} d_k P_k v\|_{L^2(S^{n-1})}
\leq \| v\|_{L^2(S^{n-1})}.
\label{seqSame}
\end{equation}
As before, we interpolate the last two inequalities (with $d_k = c_k$) and conclude that \eqref{haha} holds.

\end{proof}

Now we prove an $L^p-L^2$ type Carleman estimate for the operator $L^-$.

\begin{lemma}
For every $v \in C^\iny_c\pr{(-\infty, \ t_0)\times S^{n-1}}$ and $\frac{2n}{n+2}< p < 2$,
\begin{equation}
\|t^{-{1}} e^{-\tau \varphi(t)} v\|_{L^2(dtd\omega)} \leq C
\tau^\beta \|t e^{-\tau \varphi(t)} L^- v\|_{L^p(dtd\omega)},
\label{key-}
\end{equation}
where $\be = - \frac 1 2 + \frac{\pr{3n-2}\pr{2-p}}{8p}$ and $C$ depends on $n$ and $p$.
\label{CarL-p}
\end{lemma}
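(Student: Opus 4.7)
The plan is to combine the spherical harmonic decomposition of $L^2(S^{n-1})$ with the already-established $L^2 \to L^2$ Carleman estimate for $L^-$ (Lemma \ref{CarL-qq}) and Sogge's sharp eigenfunction bounds on the sphere (equation \eqref{sogg}, which is the engine behind Lemma \ref{upDown}), following the general strategy of Jerison \cite{Jer86} and Regbaoui \cite{Reg99}. The key structural input is that $\Lambda$ has eigenvalue $k + \frac{n-2}{2}$ on $E_k$, so the first-order operator $L^- = \partial_t - \Lambda + \frac{n-2}{2}$ acts as the scalar operator $\partial_t - k$ on the $k$-th spherical harmonic mode, diagonalizing the problem into a family of one-dimensional ODEs indexed by the frequency $k$.

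First, I would write $v = \sum_k v_k$ with $v_k = P_k v$, and similarly for $f := L^- v$. Each mode satisfies $(\partial_t - k) v_k = f_k$, and the compact support of $v$ yields the explicit integral representation
\begin{equation*}
v_k(t, \omega) = -\int_t^{t_0} e^{k(t-s)} f_k(s, \omega) \, ds.
\end{equation*}
Plancherel's theorem on the sphere then gives
\begin{equation*}
\|t^{-1} e^{-\tau \varphi} v\|_{L^2(dt\,d\omega)}^2 = \sum_{k \geq 0} \|t^{-1} e^{-\tau \varphi} v_k\|_{L^2(dt\,d\omega)}^2,
\end{equation*}
reducing the bound to a mode-by-mode estimate that is then summed in $k$.

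Second, I would obtain the target $L^p - L^2$ estimate by complex interpolation between two endpoint estimates. The first endpoint is precisely Lemma \ref{CarL-qq}, giving the $L^2 - L^2$ estimate with $\tau$-exponent $-\tfrac12$. For the second endpoint, at $p_0 = \frac{2n}{n+2}$ with $\tau$-exponent $\frac{n-2}{4n}$, I would apply Sogge's sharp inequality \eqref{sogg} (which yields $\|P_k g\|_{L^2(S^{n-1})} \leq C k^{(n-2)/(2n)} \|g\|_{L^{p_0}(S^{n-1})}$) pointwise in $t$ inside the integral representation of $v_k$, and then control the resulting one-dimensional weighted integral operator via Schur's test or Young's inequality. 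The factor $\tau \varphi'(t) \approx \tau$ appearing in the conjugated kernel $e^{k(t-s) + \tau(\varphi(s) - \varphi(t))}$ is what furnishes the $\tau$-gain. Interpolating between the two endpoints with parameter $\theta = n(2-p)/(2p)$ then produces exactly the claimed exponent
\begin{equation*}
\beta = (1-\theta)\left(-\tfrac12\right) + \theta \cdot \tfrac{n-2}{4n} = -\tfrac12 + \tfrac{(3n-2)(2-p)}{8p}.
\end{equation*}

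The principal technical obstacle is a mixed-norm discrepancy: applying Sogge pointwise in $t$ naturally produces $L^2_t L^{p_0}_\omega$ bounds rather than the desired $L^{p_0}_{t,\omega}$ bounds. To resolve this, the interpolation must be carried out in the $t$ and $\omega$ variables simultaneously, either via Stein's analytic interpolation theorem applied to a suitable analytic family of operators, or directly via a dyadic Littlewood-Paley decomposition of the spherical harmonic spectrum, splitting the modes into low frequencies $k \leq C\tau$ and high frequencies $k > C\tau$ with separate arguments in each regime. In either case the precise exponent $\beta$ emerges from optimally balancing Sogge's loss $k^{(n-2)(2-p)/(4p)}$ in each mode against the decay of the weighted ODE kernel, and the technical bookkeeping of this balance uniformly across all spherical harmonic modes is where the bulk of the work lies.
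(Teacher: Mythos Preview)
Your interpolation strategy has a real gap at the claimed critical endpoint $p_0 = \frac{2n}{n+2}$. You propose to establish the $L^{p_0} \to L^2$ bound by applying Sogge pointwise in $t$ to each mode $v_k$ and then controlling the weighted one-dimensional kernel by Young or Schur. But if you carry this out exactly as you describe, the $k$-th high-frequency mode contributes a bound of size $k^{(n-2)(2-p)/(4p)} \cdot k^{1/p - 3/2}$, and at $p = p_0$ this equals $k^{-1/2}$; squaring and summing over $k > C\tau$ gives $\sum k^{-1}$, which diverges. The paper makes precisely this observation: the exponent $-2 + \frac{n(2-p)}{2p}$ equals $-1$ at the borderline, so the series fails to converge there. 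Thus the endpoint on which your interpolation rests is not available by the very method you plan to use, and you give no alternative mechanism (weak-type, restricted-type, analytic continuation off the line) to recover it. Without that endpoint, Riesz--Thorin or Stein interpolation cannot be invoked.

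The paper avoids this problem by never touching the endpoint: it works directly at the given $p \in \bigl(\frac{2n}{n+2}, 2\bigr)$ and uses the already-interpolated projection bound $\|P_k g\|_{L^2(S^{n-1})} \le C k^{(n-2)(2-p)/(4p)} \|g\|_{L^p(S^{n-1})}$ from Lemma \ref{upDown}. It then splits the spectrum into $k > M := \lceil 2\tau\rceil$ (high frequency) and $k \le M$ (low frequency). For the high part the argument is essentially what you sketch, and convergence of $\sum_{k>M} k^{-1-\epsilon}$ for $p > p_0$ yields the correct $\tau$-power. The low-frequency part is more delicate than your outline suggests: the threshold $N = \lceil \tau\varphi'(t)\rceil$ depends on $t$, the second-order Taylor term $\varphi''(s_0)(s-t)^2 = -\frac{1}{s_0^2}(s-t)^2$ supplies a Gaussian factor $e^{-\tau(s-t)^2/t^2}$ in the kernel, and the full strength of Lemma \ref{upDown} with the $\bigl(\sum |c_k|^2\bigr)^{n/2}$ weight is needed to control the block $N \le k \le M$ before Young's inequality can be applied. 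None of this is visible in your proposal; the phrase ``optimally balancing Sogge's loss against the decay of the kernel'' does not yet capture the mechanism. Note also that Lemma \ref{CarL-qq} is not used at all in the paper's proof of this lemma.
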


\begin{proof}
To prove this lemma, we introduce the conjugated operator $L^-_\tau$ of $L^-$, defined by
$$L^-_\tau u=e^{-\tau\varphi(t)}L^-(e^{\tau \varphi(t)}u).$$
With $v=e^{\tau \varphi(t)}u$, inequality \eqref{key-} is equivalent to
\begin{equation}
\|t^{-{1}}u \|_{L^2(dtd\omega)}\leq C \tau^\beta \|t L^-_\tau
u\|_{L^p(dtd\omega)}. \label{keyu}
\end{equation}

From \eqref{ord} and \eqref{use}, the operator $L^-_\tau$ takes the form
\begin{equation}
L^-_\tau=\sum_{k\geq 0} (\partial_t+\tau \varphi'(t)-k)P_k.
\label{ord1}
\end{equation}
Since $\disp \sum_{k\geq 0} P_k v= v,$ we split $\disp \sum_{k \ge 0} P_k v$ into two sums.
Let $M=\lceil 2\tau\rceil$ and define
$$ P^+_\tau=\sum_{k> M}P_k, \quad \quad  P^-_\tau=\sum_{k=0}^{M}P_k.      $$
In order to prove the \eqref{keyu}, it suffices to show that
\begin{equation}
\| t^{-1} P^+_\tau {u} \|_{L^2(dtd\omega)} \leq \tau^{\beta}\|
t{L_\tau^- u} \|_{L^{{p}}(dtd\omega)}
\label{key1}
\end{equation}
and
\begin{equation}
\| t^{-1} P^-_\tau {u}\|_{L^2(dtd\omega)}\leq \tau^\beta\| t
{L_\tau^- u} \|_{L^{{p}}(dtd\omega)},
\label{key2}
\end{equation}
for all $u \in C^\iny_c\pr{(-\infty, \ t_0)\times S^{n-1}}$ and $\frac{2n}{n+2}< p < 2$.
The sum of \eqref{key1} and \eqref{key2} will yield \eqref{keyu}, which implies \eqref{key-}.
We first establish \eqref{key1}.
From \eqref{ord1}, we have the first order differential equation
\begin{equation}
P_k L^-_\tau u= (\partial_t+\tau \varphi'(t)-k)P_k u.
\label{sord}
\end{equation}
For $u\in C^\infty_{0}\pr{ (-\infty, \ t_0)\times S^{n-1}}$
, solving the first order differential equation gives that
\begin{equation}
P_k u(t, \omega)
=-\int_{-\infty}^{\infty} H(s-t)e^{k(t-s)+\tau\brac{\varphi(s)-\varphi(t)}} P_k L^-_\tau u (s, \omega)\, ds,
\label{star}
\end{equation}
where $H(z)=1$ if $z\geq 0$ and $H(z)=0$ if $z<0$.

For $k\geq M \ge 2 \tau$, we obtain that
\begin{equation*}
H(s-t)e^{k(t-s)+\tau\brac{\varphi(s)-\varphi(t)}}
\leq e^{-\frac{1}{2}k|t-s|}
\end{equation*}
for all $s, t\in (-\infty, \ t_0)$.
Taking the $L^2\pr{S^{n-1}}$-norm in \eqref{star} gives that
\begin{equation*}
\|P_k u(t, \cdot)\|_{L^2(S^{n-1})}
\leq  \int_{-\infty}^{\infty} e^{-\frac{1}{2}k|t-s|} \|P_k L^-_\tau u(s, \cdot)\|_{L^2(S^{n-1})} \,ds.
\end{equation*}
With the aid of \eqref{indu}, we get
\begin{equation*}
\|P_k u(t, \cdot)\|_{L^2(S^{n-1})}\leq C k^{\frac{(n-2)(2-p)}{4p}}
\int_{-\infty}^{\infty} e^{-\frac{1}{2}k|t-s|} \|L^-_\tau u(s,
\cdot)\|_{L^p(S^{n-1})} \,ds
\end{equation*}
for all $\frac{2n}{n+2}\leq p\leq 2$.
Applying Young's inequality for convolution then yields
\begin{equation*}
\|P_k u\|_{L^2(dt d\omega)}
\leq C k^{\frac{(n-2)(2-p)}{4p}} \pr{\int_{-\infty}^{\infty} e^{-\frac{\sigma}{2}k|z|} dz}^{\frac{1}{\sigma}}\|L^-_\tau u\|_{L^p(dtd\omega)}
\end{equation*}
with $\frac{1}{\sigma}=\frac{3}{2}-\frac{1}{p}$.
A calculation shows that
$$ \pr{\int_{-\infty}^{\infty} e^{-\frac{\sigma}{2}k|z|}}^{\frac{1}{\sigma}}\leq C k^{\frac{1}{p} - \frac{3}{2}}.  $$
Therefore,
\begin{equation*}
\|P_k u\|_{L^2(dt d\omega)}\leq C
k^{-1+\frac{n\pr{2-p}}{4p}} \|L^-_\tau
u\|_{L^p(dtd\omega)}.
\end{equation*}
Squaring and summing up $k> M$ gives that
$$\sum_{k> M} \|P_k u\|^2_{L^2(dt d\omega)}
\leq C \sum_{k> M} k^{-2+\frac{n\pr{2-p}}{2p}} \|L^-_\tau u\|^2_{L^p(dtd\omega)}.$$
Since $\frac{2n}{n+2}<p$, then $-2+\frac{n\pr{2-p}}{2p} <-1$.
Thus, $\disp \sum_{k> M} k^{-2+\frac{n\pr{2-p}}{2p}}$ converges.
Note that at the borderline, where $p=\frac{2n}{n+2}$,  we have that $-2+\frac{n\pr{2-p}}{2p}=-1$ and then the series $\disp \sum_{k\geq M} k^{-2+\frac{n\pr{2-p}}{2p}}$ diverges.
Further calculations show that
$$\sum_{k> M} k^{-2+\frac{n\pr{2-p}}{2p}}\leq C M^{-1+\frac{n\pr{2-p}}{2p}} \leq
C \tau^{-1+\frac{n\pr{2-p}}{2p}}.$$
Recalling that $2\be =- 1 + \frac{\pr{3n-2}\pr{2-p}}{4p}$, since $p < 2$ and $n\geq 3$, then
$-1+\frac{n\pr{2-p}}{2p} \leq 2 \beta$.
Therefore,
\begin{equation*}
\|  P^+_\tau u\|_{L^2(dtd\omega)}\leq C\tau^{\beta}\| L^-_\tau
u\|_{L^p(dtd\omega)},
\end{equation*}
which implies estimate \eqref{key1} since $u$ and $L_\tau^-u$ are supported on $\pr{-\iny, -\abs{t_0}} \times S^{n-1}$, where $\abs{t_0} \ge 1$.

Fix $t\in (-\infty, \ t_0)$ and set $N=\lceil\tau \varphi^\prime(t)\rceil$.
Recall that $\varphi(t)=t+\log t^2$.
An application Taylor's theorem (on a dyadic decomposition of $\frac s t$) shows that for all $s, t \in (-\infty, \ t_0)$
\begin{align}
\varphi(s)-\varphi(t)
&= \varphi'(t)(s-t)+\frac{1}{2}\varphi''(s_0)(s-t)^2
= \varphi'(t)(s-t)-\frac{1}{(s_0)^2}(s-t)^2,
\label{Taylor}
\end{align}
where $s_0$ is some number between $s$ and $t$.
If $s>t$, then
$$S_k(s, t) = e^{k(t-s) + \tau\brac{\vp\pr{s - \vp\pr{t}}}} \leq e^{-(k-\tau\varphi'(t))(s-t)-\frac{\tau}{t^2}(s-t)^2},$$
so that
\begin{equation}
H(s-t) S_k(s, t)\leq e^{-|k- N 
||s-t|-\frac{\tau}{t^2}(s-t)^2}.
\label{case1}
\end{equation}
First we consider the case $N\leq k\leq M$.
From \eqref{star}, we sum over $k$ to get
\begin{equation}
\| \sum^M_{k=N} P_k u(t, \cdot)\|_{L^2(S^{n-1})}
\leq \int_{-\infty}^{\infty} \| \sum^M_{k=N} H(s-t)S_k(s,t) P_k L^-_\tau u(s, \cdot) \|_{L^2(S^{n-1})}\, ds.
\label{back}
\end{equation}
With $c_k= H(s-t)S_k(s,t)$, it is clear that $|c_k|\leq 1$.
Therefore, Lemma \ref{upDown} is applicable, so we may apply estimate \eqref{haha} to obtain
\begin{eqnarray}
&&\|\sum^M_{k=N}H(s-t)S_k(s,t) P_k L^-_\tau u(s, \cdot)\|_{L^2(S^{n-1})}\leq \nonumber \medskip\\
&& C \brac{ \tau^{\frac{n-2}{{2}}}\pr{\sum^M_{k=N}
H(s-t)|S_k(s,t)|^2}^{\frac{n}{2}}}^{\frac{1}{p}-\frac{1}{2}}
 \|L^-_\tau u(s, \cdot) \|_{L^p(S^{n-1})}
\label{mixSum}
\end{eqnarray}
for all $\frac{2n}{n+2}< p < 2$. Now we use the inequality
\eqref{case1} to bound $\disp \sum^M_{k=N} H(s-t)|S_k(s,t)|^2$.
\begin{align}
\sum^M_{k=N} H(s-t)|S_k(s,t)|^2
&\leq \pr{\sum^M_{k=N+1} e^{-2|k- N ||s-t|}+1}e^{ -\frac{2\tau}{t^2}(s-t)^2}
\nonumber \\
&\leq  C \pr{ \frac{1}{|s-t|}+1} e^{ -\frac{\tau}{t^2}(s-t)^2} .
\label{sumBnd}
\end{align}
Therefore, from \eqref{mixSum},
\begin{eqnarray*}
&&\|\sum^M_{k=N}H(s-t)S_k(s,t) P_k L^-_\tau u(s,
\cdot)\|_{L^2(S^{n-1})} \nonumber \medskip\\ &&
\leq  C \tau^{\alpha_1} (|s-t|^{-\alpha_2}+1)e^{-\frac{\alpha_2\tau}{t^2}(s-t)^2}\| L^-_\tau u(s, \cdot) \|_{L^p(S^{n-1})},
\end{eqnarray*}
where $\alpha_1=\frac{(n-2)(2-p)}{4p}$ and $\alpha_2=\frac{n(2-p)}{4p}$.
We see that
$$ e^{-\frac{\alpha_2\tau}{t^2}(s-t)^2}\leq C_j \pr{1+\frac{\alpha_2\tau}{t^2}(s-t)^2}^{-j}   $$
for all $j\geq 0$ so that with $j=\frac{1}{2}$, we have
\begin{equation}
e^{-\frac{\alpha_2\tau}{t^2}(s-t)^2}\leq C|t|\pr{1+\sqrt{\tau}|s-t|}^{-1}.
\label{expBnd}
\end{equation}
Thus,
$$ \|\sum^M_{k=N}H(s-t)S_k(s,t) P_k L^-_\tau u(s,\cdot)\|_{L^2(S^{n-1})}
\leq \frac{C\tau^{\alpha_1}|t|(|s-t|^{-\alpha_2}+1)\| L^-_\tau u(s,
\cdot) \|_{L^p(S^{n-1})}}{1+\sqrt{\tau}|s-t|}.
$$
It follows from \eqref{back} that
\begin{equation}
|t|^{-1}\| \sum^M_{k=N} P_k u(t, \cdot)\|_{L^2(S^{n-1})} \leq C
\tau^{\alpha_1} \int_{-\infty}^{\infty}\frac{(|s-t|^{-\alpha_2}+1)\|
L^-_\tau u(s, \cdot) \|_{L^p(S^{n-1})}}{1+\sqrt{\tau}|s-t|}.
\label{mile}
\end{equation}

For $k\leq N-1$, we solve the first order differential equation
\eqref{sord} as
\begin{equation}
P_k u(t, \omega)=\int_{-\infty}^{\infty} H(t-s)S_k(s, t) P_k L^-_\tau u\pr{s, \om}\, ds
\label{star1}.
\end{equation}
It follows from \eqref{Taylor} that for any $s, t$
\begin{equation}
H(t-s) S_k(s, t)\leq e^{-|N- 1 - k  
||s-t|-\frac{\tau}{s^2}(t-s)^2}.
\label{case2}
\end{equation}
Arguing as before, we use the upper bound for $H(t-s)S_k(s,t)$ in \eqref{case2}  to similarly conclude that
\begin{equation}
\| \sum_{k=0}^{N-1} P_k u(t, \cdot)\|_{L^2(S^{n-1})} \leq
C\tau^{\alpha_1} \int_{-\infty}^{\infty}
\frac{|s|(|s-t|^{-\alpha_2}+1)\| L^-_\tau u(s, \cdot)
\|_{L^p(S^{n-1})}}{1+\sqrt{\tau}|s-t|}.
\label{mile1}
\end{equation}
Since $s,t $ are in $(-\infty, \ t_0)$ with $|t_0|$ large enough, we combine estimates \eqref{mile} and \eqref{mile1} to arrive at
\begin{equation*}
|t|^{-1}\| P_\tau^- u(t, \cdot)\|_{L^2(S^{n-1})} \leq C
\tau^{\alpha_1} \int_{-\infty}^{\infty}
\frac{|s|(|s-t|^{-\alpha_2}+1)\| L^-_\tau u(s, \cdot)
\|_{L^p(S^{n-1})}}{1+\sqrt{\tau}|s-t|}.
\end{equation*}
Applying Young's inequality for convolution, we get
\begin{equation*}
\| t^{-1} P_\tau^- u (t, \cdot) \|_{L^2(dtd\omega)}
\leq C\tau^{\alpha_1} \brac{\int_{-\infty}^{\infty} \pr{\frac{ |z|^{-\alpha_2}+1} {1+\sqrt{\tau}|z|}}^\si \, dz}^{\frac{1}{\si}} \| tL^-_\tau u \|_{L^p(dtd\omega)},
\end{equation*}
where $\frac{1}{\si}=\frac{3}{2}-\frac{1}{p}$.
Since $\al_2 \in \pr{0, \frac 1 2}$ and $\si \in \pr{1, 1 + \frac 1 {n-1}}$ for our
range of $p$, then a direct calculation shows that
$$\brac{\int_{-\infty}^{\infty} \pr{\frac{ |z|^{-\alpha_2}+1} {1+\sqrt{\tau}|z|}}^\si \, dz}^{\frac{1}{\si}} \leq C\tau^{-\frac{1}{2\si}+\frac{\alpha_2}{2}}.
$$
Therefore,
\begin{equation*}
\| t^{-1} P_\tau^- u (t, \cdot) \|_{L^2(dtd\omega)}\leq
C\tau^{-\frac{1}{2\si}+\frac{\alpha_2}{2}+\alpha_1} \| t L^-_\tau
u \|_{L^p(dtd\omega)}.
\end{equation*}
Since
$-\frac{1}{2\si}+\frac{\alpha_2}{2}+\alpha_1=\beta$,
this completes \eqref{key2}, and the lemma is proved.
\end{proof}

We now have all of the ingredients needed to prove the general $L^p - L^q$ Carleman estimate for the Laplace operator given in Theorem \ref{Carlpq}.
The lemmas that we have established within this section were adapted from the ideas in \cite{Reg99} and \cite{Jer86}.
Note that in \cite{Reg99}, a version of the following theorem is proved for $p=\frac{14n-4}{7n+10}$ and $q = 2$.
Similar $L^p - L^2$ Carleman estimates have been shown in  \cite{BKRS88}.

\begin{proof}[Proof of Theorem \ref{Carlpq}]
Let $u\in C^{\infty}_{0}\pr{B_{R_0}(x_0)\backslash\set{x_0} }$.
We first consider the case of $p \in \pr{\frac{2n}{n-2}, 2}$.
An application of \eqref{cond} from Lemma \ref{Car22} applied to $v$, followed by an
application of \eqref{key-} from Lemma \ref{CarL-p} applied to $L^+v$ shows that
\begin{align*}
\tau \norm{t^{-1} e^{-\tau \varphi(t)}v}_{L^2(dtd\omega )}
&+\norm{t^{-1}e^{-\tau \varphi(t)} \partial_t v}_{L^2(dtd\omega )}
+\sum_{j=1}^n \norm{t^{-1}e^{-\tau \varphi(t)} \Omega_j v }_{L^2(dtd\omega )}   \\
&\leq C\norm{t^{-1} e^{-\tau \varphi(t)} L^+v}_{L^2(dtd\omega )} \\
&\le C\tau^{-\frac 1 2 + \frac{\pr{3n-2}\pr{2-p}}{8p}}\norm{t
e^{-\tau \varphi(t)} L^- L^+v}_{L^p(dtd\omega )}.
\end{align*}
Now consider when $p = 2$.
Lemma \ref{Car22} combined with \eqref{key-q22} from Lemma \ref{CarL-qq} implies that
\begin{align*}
\tau \norm{t^{-1} e^{-\tau \varphi(t)}v}_{L^2(dtd\omega )}
&+\norm{t^{-1}e^{-\tau \varphi(t)} \partial_t v}_{L^2(dtd\omega )}
+\sum_{j=1}^n \norm{t^{-1}e^{-\tau \varphi(t)} \Omega_j v }_{L^2(dtd\omega )}   \\
&\leq C\norm{t^{-1} e^{-\tau \varphi(t)} L^+v}_{L^2(dtd\omega )} \\
&\le C\tau^{-\frac 1 2}\norm{ e^{-\tau \varphi(t)} L^-
L^+v}_{L^2(dtd\omega )} \\ &\le C\tau^{-\frac 1 2}\norm{t e^{-\tau
\varphi(t)} L^- L^+v}_{L^2(dtd\omega )}.
\end{align*}
From the last two inequalities, recalling the definitions of $t$, $\vp$, and $L^\pm$, we get that for any $p \in \pb{\frac{2n}{n+2}, 2}$,
\begin{align}
&\tau \|(\log r)^{-1} e^{-\tau \phi(r)}u\|_{L^2(r^{-n}dx)}
+ \|(\log r )^{-1} e^{-\tau \phi(r)}r \nabla u\|_{L^2(r^{-n}dx)}
\nonumber \medskip\\
&\leq  C \tau^{\beta}  \|(\log r ) e^{-\tau \phi(r)} r^2 \LP u\|_{L^p(r^{-n}dx)},
\label{dodo}
\end{align}
where we have set $\be = - \frac 1 2 + \frac{\pr{3n-2}\pr{2-p}}{8p}$.

Notice that by the triangle inequality
\begin{align}
\|\nabla [(\log r)^{-1} e^{-\tau \phi(r)}  u r^\frac{-n+2}{2}]\|_{L^2}
&\leq \| (\log r)^{-1} e^{-\tau \phi(r)} u
\|_{L^2(r^{-n}dx)}+ \| (\log r)^{-2} e^{-\tau \phi(r)} u
\|_{L^2(r^{-n}dx)}  \nonumber \medskip\\
&+\tau  \| (\log r)^{-1} e^{-\tau \phi(r)}\phi'(r) r u
\|_{L^2(r^{-n}dx)} \nonumber \medskip\\ &+\frac{n}{2}\|(\log r)^{-1}
e^{-\tau \phi(r)}  u \|_{L^2(r^{-n}dx)}
+\|(\log r)^{-1} e^{-\tau \phi(r)}  r \nabla u \|_{L^2(r^{-n}dx)} \nonumber \medskip \\
&\leq C\tau \| (\log r)^{-1} e^{-\tau \phi(r)} u \|_{L^2(r^{-n}dx)}
\nonumber \medskip\\  &+C \| (\log r)^{-1} e^{-\tau \phi(r)} r
\nabla u \|_{L^2(r^{-n}dx)}, \label{dodo1}
\end{align}
where we have used that $\phi'(r)=\frac{1}{r}+\frac{2}{r\log r} \le \frac 1 r$ since $r \le R_0 \le 1$.
By the Sobolev embedding theorem,
\begin{align}
 \|(\log r)^{-1} e^{-\tau \phi(r)} u\|_{L^\frac{2n}{n-2}(r^{-n}dx)}
&= \ \|(\log r)^{-1} e^{-\tau \phi(r)} u r^\frac{-n+2}{2}\|_{L^\frac{2n}{n-2}\pr{B_{R_0}}} \nonumber \\
&\le c \|\nabla [(\log r)^{-1} e^{-\tau \phi(r)}  u r^\frac{-n+2}{2}]\|_{L^2\pr{B_{R_0}}} \nonumber \\
&\le C\tau \| (\log r)^{-1} e^{-\tau \phi(r)} u \|_{L^2(r^{-n}dx)}
\nonumber \\ &+C \| (\log r)^{-1} e^{-\tau \phi(r)} r \nabla u \|_{L^2(r^{-n}dx)}  \nonumber \\
&\leq  C \tau^\be \|(\log r ) e^{-\tau \phi(r)} r^2 \LP
u\|_{L^p(r^{-n} dx)}, \label{L2*Est}
\end{align}
where the last two inequalities are due to \eqref{dodo1} and \eqref{dodo}, respectively.
From (\ref{dodo}), it is clear that
\begin{equation}
\|(\log r)^{-1} e^{-\tau \phi(r)}u\|_{L^2(r^{-n}dx)}
\leq  C \tau^{\be -1} \|(\log r ) e^{-\tau \phi(r)} r^2 \LP u\|_{L^p(r^{-n} dx)}.
\label{L2Est}
\end{equation}
We are going to do a interpolation with the last two inequalities.
Choose $\la \in \pr{0,1}$ so that $q = 2 \la + \pr{1-\la} \frac{2n}{n-2}$.
By H\"older's inequality,
\begin{align*}
\|(\log r)^{-1} e^{-\tau \phi(r)}u\|_{L^q(r^{-n}dx)}
&\le \|(\log r)^{-1} e^{-\tau \phi(r)}u\|_{L^2(r^{-n}dx)}^{\frac{2\la}{q}} \|(\log r)^{-1} e^{-\tau \phi(r)}u\|_{L^{\frac{2n}{n-2}}(r^{-n}dx)}^{\frac{2n\pr{1-\la}}{\pr{n-2}q}}.
\end{align*}
Since $\la  = \frac{2q - n\pr{q - 2} }{4}$, if we set $\te = \frac{2\la}{q} = \frac{2q  - n\pr{q -2}}{2q}$, then $1 - \te = \frac{n\pr{q-2}}{2q} = \frac{2n\pr{1-\la}}{\pr{n-2}q}$ and we have that $0\leq \theta\leq 1$.
Therefore,
\begin{align*}
& \|(\log r)^{-1} e^{-\tau \phi(r)}u\|_{L^q(r^{-n}dx)} \\
&\leq  \|(\log r)^{-1} e^{-\tau \phi(r)}u\|_{L^{2}(r^{-n}dx)}^{\theta} \|(\log
r)^{-1} e^{-\tau\phi(r)}u\|_{L^\frac{2n}{n-2}(r^{-n}dx)}^{1-\theta} \\
&\le \brac{C\tau^{\beta-1} \|(\log r ) e^{-\tau \phi(r)} r^2 \LP u\|_{L^p(r^{-n} dx)}}^\te
\brac{C \tau^\beta  \|(\log r ) e^{-\tau \phi(r)} r^2 \LP u\|_{L^p(r^{-n} dx)}}^{1 - \te} \\
&= C \tau^{\beta-\te} \|(\log r ) e^{-\tau \phi(r)} r^2 \LP u\|_{L^p(r^{-n} dx)},
\end{align*}
where the last inequality follows from \eqref{L2Est} and \eqref{L2*Est}.
That is, for any $2\leq q\leq \frac{2n}{n-2}$,
\begin{equation*}
\tau^{\frac 3 2 - \frac{\pr{3n-2}\pr{2-p}}{8p} - \frac{n\pr{q -2}}{2q} }\|(\log r)^{-1} e^{-\tau
\phi(r)}u\|_{L^q(r^{-n}dx)} \leq C \|(\log r ) e^{-\tau \phi(r)} r^2
\LP u\|_{L^p(r^{-n} dx)}.
\end{equation*}
Since \eqref{dodo} implies that
$$\tau^{\frac 1 2 - \frac{\pr{3n-2}\pr{2-p}}{8p}}   \|(\log r )^{-1} e^{-\tau \phi(r)}r \nabla u\|_{L^2(r^{-n}dx)} \leq  C \|(\log r ) e^{-\tau \phi(r)} r^2 \LP u\|_{L^p(r^{-n}dx)},$$
adding the previous two inequalities gives the proof of Theorem \ref{Carlpq}.
\end{proof}

We prove a quantitative Caccioppoli inequality for the second order elliptic equation (\ref{goal}) with singular lower order terms.
This Caccioppoli inequality is known, but since we want to show how the estimate depends on the norms of $W$ and $V$, we present the details of the proof.

\begin{lemma}
Assume that for some $s \in \pb{n, \iny}$ and $t \in \pb{ \frac n 2, \iny}$, $\norm{W}_{L^s\pr{B_{R}}} \le K$ and $\norm{V}_{L^t\pr{B_{R}}} \le M$.
Let $u$ be a solution to \eqref{goal} in $B_R$.
Then there exists a constant $C$, depending only on $n$, $s$ and $t$, such that
\begin{equation}
\|\nabla u\|^2_{L^2(B_r)}\leq
C\brac{\frac{1}{(R-r)^2}+K^{\frac{2s}{s-n}}+M^{\frac{2t}{2t-n}}}\|
u\|^2_{L^2(B_R)}
\end{equation}
for any $r<R$.
\label{CaccLem}
\end{lemma}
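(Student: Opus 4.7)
The plan is to carry out a standard Caccioppoli-type energy argument, but with careful bookkeeping of the exponents in the Hölder--Sobolev--Young chain so that the powers $\frac{2s}{s-n}$ and $\frac{2t}{2t-n}$ come out correctly. First I would choose a cutoff $\eta\in C_c^\infty\pr{B_R}$ with $\eta\equiv 1$ on $B_r$, $0\le\eta\le 1$, and $\abs{\gr\eta}\le \frac{C}{R-r}$. Testing the weak form of \eqref{goal} against $\eta^{2}\bar u$ and integrating by parts in the $\LP u$ term yields the basic identity
\begin{equation*}
\int\abs{\gr u}^{2}\eta^{2} \;=\; -2\int \eta\,\bar u\,\gr u\cdot\gr\eta \;+\; \int\pr{W\cdot\gr u}\eta^{2}\bar u \;+\; \int V\abs{u}^{2}\eta^{2}.
\end{equation*}
The first term on the right is handled by Cauchy--Schwarz and produces the $\frac{1}{\pr{R-r}^{2}}\|u\|_{L^{2}\pr{B_R}}^{2}$ contribution after absorption. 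The remaining two terms are where the singular norms $K$ and $M$ enter, and the main task is to bound them using Hölder, Sobolev embedding $W^{1,2}\hookrightarrow L^{2n/\pr{n-2}}$, and Young's inequality so that a small multiple of $\int\abs{\gr u}^{2}\eta^{2}$ can be reabsorbed into the left-hand side.

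For the drift term, Hölder's inequality with exponents $\pr{s,2,\frac{2s}{s-2}}$ gives
\begin{equation*}
\abs{\int\pr{W\cdot\gr u}\eta^{2}\bar u} \le K\,\|\eta\,\gr u\|_{L^{2}}\,\|\eta u\|_{L^{2s/\pr{s-2}}}.
\end{equation*}
Because $s>n$, the exponent $\frac{2s}{s-2}$ lies strictly between $2$ and $\frac{2n}{n-2}$, so the interpolation
\begin{equation*}
\|\eta u\|_{L^{2s/\pr{s-2}}} \le \|\eta u\|_{L^{2}}^{1-n/s}\,\|\eta u\|_{L^{2n/\pr{n-2}}}^{n/s}
\end{equation*}
is available, and Sobolev embedding then bounds $\|\eta u\|_{L^{2n/\pr{n-2}}}$ by $\|\gr\pr{\eta u}\|_{L^{2}}\le \|\eta\,\gr u\|_{L^{2}}+\|u\,\gr\eta\|_{L^{2}}$. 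The resulting right-hand side has the form $K\cdot A^{1+n/s}B^{1-n/s}$ plus a harmless mixed piece involving $\|u\,\gr\eta\|_{L^{2}}$, where $A=\|\eta\,\gr u\|_{L^{2}}$ and $B=\|\eta u\|_{L^{2}}$. An application of Young's inequality with conjugate exponents $\frac{2s}{s+n}$ and $\frac{2s}{s-n}$ then gives
\begin{equation*}
K\,A^{1+n/s}B^{1-n/s} \le \eps\,A^{2}+C_{\eps}\,K^{2s/\pr{s-n}}B^{2},
\end{equation*}
which is exactly the exponent on $K$ claimed in the lemma. A second application of Young to the leftover $K^{2}\pr{R-r}^{-2n/s}$ factor from the cross piece redistributes it between a $K^{2s/\pr{s-n}}$ term and a $\pr{R-r}^{-2}$ term.

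For the potential term I would use Hölder with exponents $\pr{t,\frac{t}{t-1}}$ to obtain
\begin{equation*}
\abs{\int V\abs{u}^{2}\eta^{2}} \le M\,\|\eta u\|_{L^{2t/\pr{t-1}}}^{2},
\end{equation*}
and since $t>\frac n2$ the exponent $\frac{2t}{t-1}$ again lies between $2$ and $\frac{2n}{n-2}$. Interpolation yields $\theta=\frac{n}{2t}$ and hence
\begin{equation*}
\|\eta u\|_{L^{2t/\pr{t-1}}}^{2} \le C\,\|\eta u\|_{L^{2}}^{2-n/t}\,\|\gr\pr{\eta u}\|_{L^{2}}^{n/t}.
\end{equation*}
Young's inequality with conjugate exponents $\frac{2t}{n}$ and $\frac{2t}{2t-n}$ converts $M\,B^{2-n/t}A^{n/t}$ into $\eps A^{2}+C_{\eps}M^{2t/\pr{2t-n}}B^{2}$, producing the exponent $\frac{2t}{2t-n}$ on $M$. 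Choosing $\eps$ small enough to absorb all $\int\abs{\gr u}^{2}\eta^{2}$ contributions on the left, and using $\eta\equiv 1$ on $B_r$ together with $\|u\,\gr\eta\|_{L^{2}}\le \frac{C}{R-r}\|u\|_{L^{2}\pr{B_R}}$, yields the stated inequality.

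The main obstacle is the accounting: one must ensure that the $\|\eta\,\gr u\|_{L^{2}}$ factors produced by Sobolev embedding in both the $W$-term and the $V$-term can be simultaneously absorbed into the left-hand side without the absorption constants spoiling either of the powers $\frac{2s}{s-n}$ or $\frac{2t}{2t-n}$. Splitting $\pr{a+b}^{n/s}\le C\pr{a^{n/s}+b^{n/s}}$ (and analogously for $n/t$) at the outset keeps the two kinds of contributions separate and makes each application of Young's inequality into a single scalar computation, which is the cleanest way I see to pin down the precise exponents.
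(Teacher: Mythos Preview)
Your proposal is correct and the exponent bookkeeping checks out. The Hölder splits $\pr{s,2,\frac{2s}{s-2}}$ and $\pr{t,\frac{t}{t-1}}$, the interpolation parameters $\theta=\frac{n}{s}$ and $\theta=\frac{n}{2t}$, and the Young exponents $\frac{2s}{s-n}$, $\frac{2t}{2t-n}$ all match, so after absorption you recover exactly the powers in the statement.

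Your route is genuinely different from the paper's. The paper does not interpolate: instead it truncates each coefficient at a level to be chosen, writing $W=\overline{W}_{K_0}+W_{K_0}$ with $\overline{W}_{K_0}=W\chi_{\set{\abs{W}\le\sqrt{K_0}}}$ and similarly for $V$. The bounded piece contributes $\sqrt{K_0}$ (resp.\ $\sqrt{M_0}$) directly, while the large-value piece has a small $L^n$ (resp.\ $L^{n/2}$) norm by Chebyshev, namely $\|W_{K_0}\|_{L^n}\le K_0^{-\frac{s-n}{2n}}K^{s/n}$, so the critical Hölder--Sobolev step produces a coefficient that can be made equal to $\frac{1}{16}$ by choosing $K_0\sim K^{2s/\pr{s-n}}$ (and $M_0\sim M^{2t/\pr{2t-n}}$). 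The truncation level itself then becomes the constant in front of $\|u\|_{L^2}^2$, which is how the exponents appear. Your interpolation-plus-Young argument is more streamlined and avoids the auxiliary decomposition; the paper's truncation argument is perhaps more transparent about \emph{why} these particular powers are forced, since the balancing condition $K_0^{-\frac{s-n}{2n}}K^{s/n}=\text{const}$ pins down $K_0$ explicitly. Either approach is standard and yields the same constant dependence.
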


\begin{proof}
We need to decompose $W$ and $V$.
Let
$$ W(x)=\overline{W}_{K_0}+W_{K_0},   \quad  V(x)=\overline{V}_{M_0}+V_{M_0},        $$
where $${\overline{W}}_{K_0}=W(x)\chi_{\set{|W(x)|\leq \sqrt{K_0}}}, \quad
{W}_{K_0}=W(x)\chi_{\set{|W(x)|>\sqrt{K_0}}},  $$ and
$${\overline{V}}_{M_0}=V(x)\chi_{\set{|V(x)|\leq \sqrt{M_0}}}, \quad
{V}_{M_0}=V(x)\chi_{\set{|V(x)|>\sqrt{M_0}}}, \quad  $$
for some $K_0, M_0$ to be determined.

For any $q$ with $1\leq q\leq s$, we have that
\begin{equation}
\|W_{K_0}\|_{L^q}\leq
K_0^{-\frac{s-q}{2q}}\|W_{K_0}\|_{L^{s}}^{\frac{s}{q}}\leq
K_0^{-\frac{s-q}{2q}}\|W\|_{L^{s}}^{\frac{s}{q}} \leq
K_0^{-\frac{s-q}{2q}}K^{\frac{s}{q}}.
\label{mmo}
\end{equation}
Similarly, for any $q$ with $1\leq q\leq t$, it holds
that
\begin{equation}
\|V_{M_0}\|_{L^q}\leq
M_0^{-\frac{t-q}{2q}}\|V_{M_0}\|_{L^{t}}^{\frac{t}{q}}\leq
M_0^{-\frac{t-q}{2q}}\|V\|_{L^{t}}^{\frac{t}{q}} \leq
M_0^{-\frac{t-q}{2q}}M^{\frac{t}{q}}.
\label{who}
\end{equation}

Let $\eta$ be a smooth cut-off function such that $\eta(x) \equiv 1$ in $B_r$,
$\eta(x) \equiv 0$ outside $B_R$ with $B_R\subset  B_1$. Then $|\nabla \eta|\leq \frac{C}{|R-r|}$.
Multiplying both sides of equation (\ref{goal}) by $\eta^2 u$ and integrating by parts, we obtain
\begin{equation}
\int V\eta^2 u^2 \, dx+\int W\cdot \nabla u \, \eta^2 u \, dx-2\int \nabla u\cdot\nabla \eta \, \eta \, u \, dx
=\int |\nabla u|^2 \eta^2 \,
dx.
\label{cacc}
\end{equation}
We estimate the terms on the left side of \eqref{cacc}. To
control $\int V\eta^2 u^2 \, dx$, we have
\begin{equation*}
\int V \eta^2 u^2\, dx \leq \int |{\overline{V}_{M_0}}|\eta^2 u^2 \,
dx+ \int |{{V}_{M_0}}|\eta^2 u^2 \, dx.
\end{equation*}
It is clear that
\begin{equation}
\int |{\overline{V}_{M_0}}|\eta^2 u^2 \, dx\leq
M_0^{\frac{1}{2}}\int \eta^2 u^2 \, dx.
\label{sss}
\end{equation}
By H\"older's inequality, \eqref{who} with $q=\frac{n}{2}$, and Sobolev imbedding, we get
\begin{eqnarray}
\abs{\int {V}_{M_0}\eta^2 u^2 \, dx }
&\leq&\pr{\int|{{V}_{M_0}}|^{\frac{n}{2}}\, dx}^{\frac{2}{n}}
\pr{\int \abs{\eta^2u^2}^{\frac{n}{n-2}}\, dx}^{\frac{n-2}{n}} \nonumber \medskip\\
&\leq & c_n {M_0}^{-\frac{2t-n}{2n}}M^{\frac{2t}{n}}\int |\nabla
(\eta u)|^2 \, dx. \label{mmm}
\end{eqnarray}
Taking $c_n {M_0}^{-\frac{2t-n}{2n}}M^{\frac{2t}{n}}=\frac 1 {16}$, i.e
$M_0=(16 c_n M^{\frac{2t}{n}})^{\frac{2n}{2t-n}}$, from
(\ref{sss}) and (\ref{mmm}), we get
\begin{eqnarray}
\int V \eta^2 u^2\, dx & \leq & C M^{\frac{2t}{2t-n}}\int
|\eta u|^2 \, dx+\frac{1}{16}\int |\nabla (\eta u)|^2 \, dx
\nonumber
\medskip \\
& \leq &C M^{\frac{2t}{2t-n}}\int |\eta u|^2 \,
dx+\frac{1}{8}\int |\nabla \eta|^2 u^2 \, dx+\frac{1}{8}\int |\nabla
u|^2 \eta^2 \, dx.
\label{more1}
\end{eqnarray}

We estimate the second term in the lefthand side of (\ref{cacc}).
It is true that
\begin{equation}
\int W \cdot \nabla u \eta^2 u\, dx
= \int {\overline{W}_{K_0}} \cdot \nabla u \eta^2 u\, dx
+ \int {{W}_{K_0}} \cdot \nabla u \eta^2 u\, dx
\label{w1}
\end{equation}
By Young's inequality, we have
\begin{equation}
\int |{\overline{W}_{K_0}}|  |\nabla u| \eta^2 u\, dx\leq \frac{1}{16}
\int |\nabla u|^2\eta^2\,dx +CK_0 \int \eta^2 u^2 \, dx.
\label{w2}
\end{equation}
By H\"older's inequality, \eqref{mmo} with $q=n$, and Sobolev imbedding, we get
\begin{eqnarray}
\int {W}_{K_0} \cdot \nabla u \eta^2 u \, dx.
&\leq & \pr{\int |{{W}_{K_0}}|^{{n}}\, dx}^{\frac{1}{n}} \pr{\int |\nabla u\cdot \eta
|^{2}\, dx}^{\frac{1}{2}} \pr{\int |u \eta |^{\frac{2n}{n-2}}\,
dx}^{\frac{n-2}{2n}}
\nonumber \medskip \\
& \leq & K_0^{-\frac{s-n}{2n}}K^{\frac{s}{n}}\||\nabla
u|\eta\|_{L^2}
\|\nabla(\eta u)\|_{L^2} \nonumber \medskip \\
&\leq & K_0^{-\frac{s-n}{2n}}K^{\frac{s}{n}}\||\nabla
u|\eta|\|^2_{L^2}+K_0^{-\frac{s-n}{2n}}K^{\frac{s}{n}}
\|\nabla(\eta u)\|^2_{L^2}.
\label{w3}
\end{eqnarray}
We choose $K_0^{-\frac{s-n}{2n}}K^{\frac{s}{n}}=\frac 1 {16}$, that is,
$K_0=16^{\frac{2n}{s-n}}K^{\frac{2s}{s-n}}$. Combining
(\ref{w1}), (\ref{w2}) and (\ref{w3}) gives that
\begin{eqnarray}
\int W \cdot \nabla u \, \eta^2 u\, dx &\leq& \frac{1}{8} \||\nabla u|
\eta\|^2_{L^2}+CK^{\frac{2s}{s-n}}
\|u\eta\|^2_{L^2}+\frac{1}{16}\|\nabla (\eta
u)\|^2_{L^2} \nonumber \\
&\leq & \frac{1}{4}\||\nabla u|
\eta\|^2_{L^2}+CK^{\frac{2s}{s-n}}
\|u\eta\|^2_{L^2}+\frac{1}{8}\||\nabla \eta| u \|^2_{L^2}.
\label{more2}
\end{eqnarray}
Finally, note that
\begin{align*}
2\int \nabla u\cdot\nabla \eta \, \eta \, u \, dx
&\le \frac 1 8 \int |\nabla u|^2 \eta^2 \, dx
+ 8 \int |\nabla \eta|^2 \abs{u}^2 \, dx
\end{align*}

Together with (\ref{cacc}), (\ref{more1}) and (\ref{more2}), we
obtain
\begin{equation*}
\int |\nabla u|^2 \eta^2
\leq C \pr{M^{\frac{2t}{2t-n}} +CK^{\frac{2s}{s-n}}} \int |u\eta|^2\,dx+ C \int
|\nabla \eta|^2u^2\,dx.
\end{equation*}
By the assumptions on $\eta$, we arrive at the conclusion in the
lemma.
\end{proof}

\section{Vanishing order}
\label{vanOrd}

Using the Carleman estimate in Theorem \ref{CarlpqVW}, we establish a three-ball inequality that serves as the main tool in the proof of Theorem \ref{thh}.
We consider solutions to \eqref{goal} with first order term $W$ and zeroth order term $V$.
The arguments we present are similar to those that appear in \cite{Ken07}.
By the translation invariance of the equations, the following three-ball inequality holds for balls centered at any $x_0 \in \R^n$ .
Without loss of generality, we assume that $x_0$ is the origin.

\begin{lemma}
Let $0 < r_0< r_1< R_1 < R_0$, where $R_0 < 1$ is sufficiently small.
Let $s \in \pb{ \frac{3n-2}{2}, \iny}$, $t \in \pb{ n\pr{\frac{3n-2}{5n-2}}, \iny}$.
Assume that for some $K, M \ge 1$, $\|W\|_{L^s\pr{B_{R_0}}} \le K$ and $\|V\|_{L^t\pr{B_{R_0}}} \le M$.
Let $u : B_{R_0} \to \C$ be a solution to \eqref{goal} in $B_{R_0}$.
Then there exists a constant $C$, depending on $n$, $s$ and $t$, such that
\begin{align}
\|u\|_{L^\infty \pr{B_{3r_1/4}}} &\le C F\pr{r_1}^{\frac n 2}  |\log
r_1| \brac{ (K+|\log r_0|)F\pr{r_0} \|u\|_{L^\iny(B_{2r_0})}}^{k_0} \nonumber \\
&\times \brac{(K+|\log
R_1|)  F\pr{R_1}\|u\|_{L^\iny(B_{R_1})}}^{1 - k_0} \nonumber \\
&+C F\pr{r_1}^{\frac n 2} \pr{\frac{R_1 }{r_1}}^{\frac n 2} \pr{1 +\frac{|\log r_0|}{K}} \nonumber \\
&\times \exp\brac{\pr{C_1 K^\kappa + C_2 M^\mu} \pr{\phi\pr{\frac{R_1}{2}}-\phi(r_0)}} \|u\|_{L^\iny(B_{2r_0})},
\label{three}
\end{align}
where $\disp k_0 = \frac{\phi(\frac{R_1}{2})-\phi(r_1)}{\phi(\frac{R_1}{2})-\phi(r_0)}$, $F\pr{r} = 1 + r K^{\frac{s}{s-n}} + r M^{\frac{t}{2t-n}}$, and $\kappa$, $\mu$, $C_1$ and $C_2$ are as in Theorem \ref{CarlpqVW}.
\label{threeBalls}
\end{lemma}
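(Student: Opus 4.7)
The plan is to apply Theorem \ref{CarlpqVW} to a cutoff of $u$, estimate the resulting commutator terms by Lemma \ref{CaccLem}, and then optimize the free parameter $\tau$. Fix a smooth radial cutoff $\eta \in C^\infty_0(B_{R_0}\setminus\{0\})$ with $\eta \equiv 1$ on $B_{R_1/2}\setminus B_{2r_0}$, supported in $B_{R_1}\setminus B_{r_0}$, and satisfying $|\nabla\eta|+r_0|\Delta\eta|\lesssim r_0^{-1}$ on the inner annulus $A_0:=B_{2r_0}\setminus B_{r_0}$ and $|\nabla\eta|+R_1|\Delta\eta|\lesssim R_1^{-1}$ on the outer annulus $A_1:=B_{R_1}\setminus B_{R_1/2}$. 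The interior regularity $u \in W^{2,p}_{\mathrm{loc}}(B_{R_0})$ noted in Section \ref{Results}, together with a density argument, allows one to apply \eqref{main1} to $\eta u$. Since $u$ solves \eqref{goal},
\begin{align*}
\Delta(\eta u)+W\cdot\nabla(\eta u)+V(\eta u) = u\,\Delta\eta + 2\nabla\eta\cdot\nabla u + (W\cdot\nabla\eta)\,u,
\end{align*}
which is supported in $A_0 \cup A_1$.

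On each annulus $A_i$ the weights $(\log r)e^{-\tau\phi(r)}r^2$ and $r^{-n}$ are comparable to their values at the corresponding scale, so the $L^p(r^{-n}dx)$-norm of the right-hand side splits into an inner and an outer contribution. An application of H\"older's inequality (using $W \in L^s$), Lemma \ref{CaccLem} on slightly enlarged versions of $B_{2r_0}$ and $B_{R_1}$, and the trivial bound $\|u\|_{L^2(B_r)} \le C r^{n/2}\|u\|_{L^\infty(B_r)}$ then produce
\begin{align*}
\text{(inner)} &\lesssim |\log r_0|\,e^{-\tau\phi(r_0)}\,(K+|\log r_0|)\,F(r_0)\,\|u\|_{L^\infty(B_{2r_0})},\\
\text{(outer)} &\lesssim |\log R_1|\,e^{-\tau\phi(R_1/2)}\,(K+|\log R_1|)\,F(R_1)\,\|u\|_{L^\infty(B_{R_1})},
\end{align*}
the factor $F$ being the quantity appearing in Lemma \ref{CaccLem} after taking square roots. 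For the left-hand side of \eqref{main1}, restrict the integration to the dyadic annulus $\{r_1/2<r<r_1\}$, on which $(\log r)^{-1}e^{-\tau\phi(r)}$ is of order $|\log r_1|^{-1}e^{-\tau\phi(r_1)}$, and invoke a Moser-type interior $L^2\to L^\infty$ bound for solutions of \eqref{goal} with singular coefficients to upgrade the resulting $L^2$-estimate into an $L^\infty(B_{3r_1/4})$-estimate at the price of an additional factor $F(r_1)^{n/2}$. Combining everything, for every $\tau \ge \tau_\ast := C_1 K^\kappa + C_2 M^\mu$,
\begin{align*}
\|u\|_{L^\infty(B_{3r_1/4})} \le C\,F(r_1)^{n/2}|\log r_1|\brac{e^{\tau(\phi(r_1)-\phi(r_0))}\mathcal{A} + e^{\tau(\phi(r_1)-\phi(R_1/2))}\mathcal{B}},
\end{align*}
with $\mathcal{A}:=(K+|\log r_0|)F(r_0)\|u\|_{L^\infty(B_{2r_0})}$ and $\mathcal{B}:=(K+|\log R_1|)F(R_1)\|u\|_{L^\infty(B_{R_1})}$.

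To conclude, optimize in $\tau$. The unconstrained minimum of the bracket is attained at $\tau_{\mathrm{opt}}=[\phi(R_1/2)-\phi(r_0)]^{-1}\log(\mathcal{B}/\mathcal{A})$, where it equals $\mathcal{A}^{k_0}\mathcal{B}^{1-k_0}$ with $k_0$ as in the statement; this yields the first term of \eqref{three}. When $\tau_{\mathrm{opt}}<\tau_\ast$, take instead $\tau=\tau_\ast$: in this regime $\mathcal{B} \le \mathcal{A}\,e^{\tau_\ast[\phi(R_1/2)-\phi(r_0)]}$, so both terms in the bracket are bounded by $\mathcal{A}\,e^{\tau_\ast[\phi(R_1/2)-\phi(r_0)]}$; writing $K+|\log r_0|=K(1+|\log r_0|/K)$ and absorbing the polynomially bounded factors $K$, $F(r_0)$, and $|\log r_1|$ into the exponential prefactor and the geometric ratio $(R_1/r_1)^{n/2}$ (using $\tau_\ast \gtrsim K^\kappa + M^\mu$ and the definition of $F$) recovers the second term of \eqref{three}.

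The main obstacle is the bookkeeping needed to make the powers of $F$, of $|\log r_i|$, and of $K,M$ come out in exactly the form \eqref{three}. In particular, producing the prefactor $F(r_1)^{n/2}$ on the left-hand side requires an interior $L^\infty$ bound whose constants depend on $\|W\|_{L^s}$ and $\|V\|_{L^t}$ only through the same exponents $s/(s-n)$ and $t/(2t-n)$ that define $F$; and the bad-case absorption used to reduce $(K+|\log r_0|)F(r_0)|\log r_1|$ to $(1+|\log r_0|/K)(R_1/r_1)^{n/2}$ depends crucially on the lower bounds on $s,t$ imposed in Theorem \ref{CarlpqVW}, which are precisely what guarantee that $\tau_\ast$ is large enough to swallow any polynomial prefactor in $K$ or $M$.
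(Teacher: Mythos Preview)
Your approach matches the paper's: apply Theorem~\ref{CarlpqVW} to $\eta u$, estimate the commutator terms on the inner and outer annuli via H\"older's inequality and the Caccioppoli inequality of Lemma~\ref{CaccLem}, upgrade from $L^2$ to $L^\infty$ by the elliptic (Moser) estimate, and optimize in $\tau$.

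The one place where you diverge, and where there is a genuine gap, is the bad case $\tau_{\mathrm{opt}}<\tau_\ast$. You plug $\tau=\tau_\ast$ back into the bracket and propose to absorb the resulting factors $K$, $F(r_0)$, and $|\log r_1|$ into the exponential and into $(R_1/r_1)^{n/2}$. This does not work as written: the constants $C_1,C_2$ in the exponent are the fixed ones from Theorem~\ref{CarlpqVW}, and the ratio $R_1/r_1$ can be taken arbitrarily close to $1$, so neither can absorb a stray factor of $K$ or $|\log r_1|$. The paper avoids the problem by a much simpler device. Once the condition $\tau_{\mathrm{opt}}<\tau_\ast$ is rewritten as
\[
U_2 \le \frac{A_1}{A_2}\,\exp\!\brac{\tau_\ast\pr{\phi\pr{\tfrac{R_1}{2}}-\phi(r_0)}}\,U_1,
\]
with $U_1=\|u\|_{L^2(B_{2r_0})}$, $U_2=\|u\|_{L^2(B_{R_1})}$, one \emph{discards} the Carleman inequality and uses only the trivial inclusion $\|u\|_{L^2(B_{r_1})}\le U_2$. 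Since
\[
\frac{A_1}{A_2}=\frac{K+|\log r_0|}{K+|\log R_1|}\,\pr{\frac{R_1}{r_0}}^{n/2}\frac{F(r_0)}{F(R_1)}\le \pr{1+\frac{|\log r_0|}{K}}\pr{\frac{R_1}{r_0}}^{n/2},
\]
the factor $(R_1/r_0)^{n/2}$ then cancels against $r_0^{n/2}$ (from $\|u\|_{L^2(B_{2r_0})}\le C r_0^{n/2}\|u\|_{L^\infty(B_{2r_0})}$) and $r_1^{-n/2}$ (from the final $L^2\to L^\infty$ upgrade), producing exactly the clean $(R_1/r_1)^{n/2}$ in \eqref{three} with no leftover polynomial factors to absorb.
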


\begin{proof}
Let $r_0< r_1< R_1$.
Choose a smooth function $\eta\in C^\infty_{0}(B_{R_0})$ with $B_{2R_1}\subset B_{R_0}$.
We use the notation $\brac{a,b}$ to denote a closed annulus with inner radius $a$ and outer radius $b$.
Let
$$D_1=\brac{\frac{3}{2}r_0, \frac{1}{2}R_1 }, \quad  \quad
D_2= \brac{r_0, \frac{3}{2}r_0}, \quad \quad
D_3=\brac{\frac{1}{2}R_1, \frac{3 }{4}R_1}.$$ Let $\eta=1$ on $D_1$
and $\eta=0$ on $[0, \ r_0]\cup \brac{\frac{3}{4}R_1, \ R_1}$. We
have $|\nabla \eta|\leq \frac{C}{r_0}$ and $|\nabla^2\eta|\leq
\frac{C}{r_0^2}$ on $D_2$. Similarly, $|\nabla \eta|\leq
\frac{C}{R_1}$ and $|\nabla^2 \eta|\leq\frac{C}{R_1^2}$ on $D_3$.

Since $u$ is a solution to \eqref{goal} in $B_{R_0}$, then, as per the discussion before the statement of Theorem \ref{thh}, $u \in L^\iny\pr{B_{R_1}} \cap W^{1,2}\pr{B_{R_1}} \cap W^{2,p}\pr{B_{R_1}}$.
Therefore, by regularization, the estimate in Theorem \ref{CarlpqVW} holds for $\eta u$.
Substituting $\eta u$ into the Carleman estimates in Theorem \ref{CarlpqVW} and using that $u$ is a solution to equation \eqref{goal}, we get
\begin{align*}
\tau^{\be_0} \|(\log r)^{-1} e^{-\tau \phi(r)} \eta u\|_{L^2(r^{-n}dx)}
&\leq  C_0 \|(\log r ) e^{-\tau \phi(r)} r^2\pr{ \LP \pr{\eta u} + W \cdot \gr\pr{\eta u} + V \eta u}\|_{L^p(r^{-n} dx)} \\
&=  C_0 \|(\log r ) e^{-\tau \phi(r)} r^2\pr{ \LP \eta \, u + 2 \gr \eta \cdot \gr u + W \cdot \gr \eta \, u }\|_{L^p(r^{-n} dx)},
\end{align*}
whenever
$$\tau \ge C_1 K^{\kappa} + C_2 M^{\mu}.$$

Then
\begin{equation}
\tau^{\beta_0} \|(\log r)^{-1} e^{-\tau \phi(r)} u\|_{L^2(D_1, r^{-n}dx )}\leq J,
\label{jjj}
\end{equation}
where
$$ J= C_0 \|(\log r) e^{-\tau \phi(r)} r^2 (\LP \eta \, u+W\cdot \nabla \eta \, u + 2\nabla \eta \cdot \nabla u)\|_{L^p(D_2\cup D_3, r^{-n} dx)}.$$
An application of H\"older's inequality shows that
\begin{align*}
\|(\log r) e^{-\tau \phi(r)} r^2 \LP \eta \, u\|_{L^p(D_2\cup D_3, r^{-n} dx)}
&\le \| \pr{\log r} r^{2} \LP \eta\|_{L^\iny\pr{D_2}} \| r^{- \frac n p}\|_{L^{\frac{2p}{2-p}}\pr{D_2}}\| e^{-\tau \phi(r)} u\|_{L^2(D_2)} \\
&+\| \pr{\log r} r^{2} \LP \eta\|_{L^\iny\pr{D_3}} \| r^{- \frac n p}\|_{L^{\frac{2p}{2-p}}\pr{D_3}}\| e^{-\tau \phi(r)} u\|_{L^2(D_3)}
\end{align*}
and
\begin{align*}
\|(\log r) e^{-\tau \phi(r)} r^2 \gr \eta \cdot \gr u\|_{L^p(D_2\cup D_3, r^{-n} dx)}
&\le \| \pr{\log r} r^{2} \gr \eta\|_{L^\iny\pr{D_2}} \| r^{- \frac n p}\|_{L^{\frac{2p}{2-p}}\pr{D_2}}\| e^{-\tau \phi(r)} \gr u\|_{L^2(D_2)} \\
&+\| \pr{\log r} r^{2} \gr \eta\|_{L^\iny\pr{D_3}} \| r^{- \frac n p}\|_{L^{\frac{2p}{2-p}}\pr{D_3}}\| e^{-\tau \phi(r)} \gr u\|_{L^2(D_3)}.
\end{align*}
As in the proof of Theorem \ref{CarlpqVW} (see the computations in \eqref{hod2}), since $\frac{2p}{2-p} \le s$, then
\begin{align*}
&\|(\log r) e^{-\tau\phi(r)} r^2 W\cdot\nabla \eta \, u\|_{L^p(r^{-n} dx, D_2 \cup D_3)} \\
&\le c \|W\|_{L^{s}\pr{D_2}} \norm{\gr \eta}_{L^\iny\pr{D_2}}\| e^{-\tau \phi(r)} r u \|_{L^2(D_2, r^{-n} dx)} \\
&+ c \|W\|_{L^{s}\pr{D_3}} \norm{\gr \eta}_{L^\iny\pr{D_3}}\| e^{-\tau \phi(r)} r u \|_{L^2(D_3, r^{-n} dx)} \\
&\le cK r_0^{-\frac{n}{2}}\|e^{-\tau \phi(r)} u\|_{{L^2(D_2)}}
+ cK R_1^{-\frac{n}{2}}\|e^{-\tau \phi(r)} u \|_{L^2(D_3)},
\end{align*}
where we have used the bounds on $\abs{\gr \eta}$.
By the bounds of $\eta$ in $D_2$ and $D_3$, we obtain
\begin{eqnarray*}
J &\leq & C |\log r_0| r_0^{-\frac{n}{2}}\pr{\|e^{-\tau \phi(r)}
u\|_{{L^2(D_2)}}+ r_0 \|e^{-\tau \phi(r)} \nabla
u\|_{{L^2(D_2)}}}
+CK r_0^{-\frac{n}{2}}\|e^{-\tau \phi(r)}u\|_{{L^2(D_2)}} \nonumber \medskip \\
&+& C|\log R_1|R_1^{-\frac{n}{2}}\pr{\|e^{-\tau \phi(r)} u\|_{{L^2(D_3)}}+ R_1
\|e^{-\tau \phi(r)} \nabla u\|_{{L^2(D_3)}}}
+CK R_1^{-\frac{n}{2}}\|e^{-\tau \phi(r)} u\|_{{L^2(D_3)}}.
\end{eqnarray*}
It follows that
\begin{eqnarray*}
J &\leq & C\pr{K + |\log r_0|} r_0^{-\frac{n}{2}} e^{-\tau \phi(r_0)}  \pr{\| u\|_{{L^2(D_2)}}+ r_0
\|\nabla u\|_{{L^2(D_2)}}} \nonumber \medskip \\
&+& C\pr{K+ |\log R_1|} R_1^{-\frac{n}{2}}e^{-\tau \phi\pr{\frac{R_1}{2}}} \pr{\|
u\|_{{L^2(D_3)}}+ R_1 \| \nabla u\|_{{L^2(D_3)}}},
\end{eqnarray*}
where we have used that $e^{-\tau\phi(r)}$ is a decreasing function with respect to $r$.
By the Caccioppoli inequality in Lemma \ref{CaccLem},
\begin{equation*}
\|\nabla u\|_{L^2(D_2)}\leq
C\pr{\frac{1}{r_0}+K^{\frac{s}{s-n}}+M^{\frac{t}{2t-n}}}\|
u\|_{L^2\pr{B_{2r_0}\backslash B_{{r_0}/{2}}}}
\end{equation*}
and
\begin{equation*}
\|\nabla u\|_{L^2(D_3)}\leq
C\pr{\frac{1}{R_1}+K^{\frac{s}{s-n}}+M^{\frac{t}{2t-n}}}\|
u\|_{L^2\pr{B_{R_1}\backslash B_{{R_1}/{4}}}}.
\end{equation*}
Therefore,
\begin{eqnarray*}
J
&\leq& C\pr{K+|\log r_0|} r_0^{-\frac{n}{2}}e^{-\tau
\phi(r_0)}\pr{1+ r_0K^{\frac{s}{s-n}}+r_0M^{\frac{t}{2t-n}}}\|u\|_{L^2(B_{2r_0})}  \nonumber \medskip \\
&+& C\pr{K+|\log R_1|} {R_1}^{-\frac{n}{2}}e^{-\tau
\phi\pr{\frac{R_1}{2}}}\pr{1+R_1K^{\frac{s}{s-n}}+R_1M^{\frac{t}{2t-n}}}
\|u\|_{L^2(B_{R_1})}.
\end{eqnarray*}
Set $D_4=\{r\in D_1, \ r\leq r_1\}$.
From \eqref{jjj} and that $\tau \ge 1$ and $\be_0 > 0$, we have,
\begin{align*}
\| u\|_{L^2 (D_4)}
&\le \tau^{\beta_0}\| u\|_{L^2 (D_4)}
\le \tau^{\beta_0} \|e^{\tau\phi(r)}(\log r) r^{\frac{n}{2}}\|_{L^\iny\pr{D_2}} \| (\log r)^{-1} e^{-\tau\phi(r)} u\|_{L^2 (D_4, r^{-n}dx)}  \\
&\le e^{\tau \phi(r_1)} |\log r_1| r_1^{\frac{n}{2}} J,
\end{align*}
where we have used that $e^{\tau\phi(r)}(\log r) r^{\frac{n}{2}}$ is increasing on $D_1$ for $R_0$ sufficiently small.
Adding $\|u\|_{L^2 \pr{B_{3r_0/2}}}$ to both sides of the last inequality and using the bound on $J$ from above, we get
\begin{align*}
\| u\|_{L^2 (B_{r_1})}
&\le C |\log r_1|  \pr{K+|\log r_0|} \pr{\frac{r_1}{r_0}}^{\frac{n}{2}}e^{\tau \brac{\phi(r_1)- \phi(r_0)}}\pr{1+ r_0K^{\frac{s}{s-n}}+r_0M^{\frac{t}{2t-n}}}\|u\|_{L^2(B_{2r_0})} \\
&+ C |\log r_1| \pr{K+|\log R_1|}
\pr{\frac{r_1}{R_1}}^{\frac{n}{2}}e^{\tau\brac{\phi(r_1) -
\phi\pr{\frac{R_1}{2}}}}\nonumber \\
&\times\pr{1+R_1K^{\frac{s}{s-n}}+R_1M^{\frac{t}{2t-n}}}
\|u\|_{L^2(B_{R_1})}.
\end{align*}
Let $U_1 =\|u\|_{L^2(B_{2r_0})}$, $U_2=\|u\|_{L^2(B_{R_1})}$ and define
\begin{align*}
A_1 &= C |\log r_1|  \pr{K+|\log r_0|} \pr{\frac{r_1}{r_0}}^{\frac{n}{2}} \pr{1+ r_0K^{\frac{s}{s-n}}+r_0M^{\frac{t}{2t-n}}} \\
A_2 &= C |\log r_1| \pr{K+|\log R_1|} \pr{\frac{r_1}{R_1}}^{\frac{n}{2}} \pr{1+R_1K^{\frac{s}{s-n}}+R_1M^{\frac{t}{2t-n}}}.
\end{align*}
Then the previous inequality simplifies to
\begin{eqnarray}
\| u\|_{L^2 (B_{r_1})}  &\leq& A_1 \brac{\frac{\exp \pr{\phi(r_1)}}{\exp\pr{ \phi(r_0)}}}^\tau U_1 + A_2 \brac{\frac{\exp\pr{ \phi(r_1)}}{\exp\pr{ \phi\pr{\frac{R_1}{2}}}}}^\tau U_2.
\label{D4est}
\end{eqnarray}
Introduce
$$\frac{1}{k_0}=\frac{\phi(\frac{R_1}{2})-\phi(r_0)}{\phi(\frac{R_1}{2})-\phi(r_1)}.$$
Recall that $\phi(r)=\log r+\log (\log r)^2$.
If $r_1$ and $R_1$ are fixed, and $r_0\ll r_1$, i.e. $r_0$ is sufficiently small, then $\frac{1}{k_0}\simeq \log \frac{1}{r_0}$.
Let
$$\tau_1 =\frac{k_0}{\phi\pr{\frac{R_1}{2}}-\phi(r_1)}\log\pr{\frac{A_2{U}_2}{A_1 {U}_1}}.$$
If $\tau_1 \ge C_1 K^\kappa + C_2 M^\mu$, then the above calculations are valid with $\tau = \tau_1$ and by substituting $\tau_1$ into \eqref{D4est}, we get
\begin{align}
\| u\|_{L^2 (B_{r_1})}
&\leq 2\pr{A_1 U_1}^{k_0}\pr{A_2 U_2}^{1 - k_0}.
\label{mix1}
\end{align}
On the other hand, if $\tau_1 < C_1 K^\kappa + C_2 M^\mu$, then
\begin{align*}
U_2
< \frac{A_1}{A_2} \exp\brac{\pr{C_1 K^\kappa + C_2 M^\mu} \pr{\phi\pr{\frac{R_1}{2}}-\phi(r_0)}} U_1.
\end{align*}
The last inequality implies that
\begin{equation}
\|u\|_{L^2 (B_{r_1})}
\le C \pr{\frac{R_1}{r_0}}^{\frac n 2} \pr{1 +\frac{|\log r_0|}{K}} e^{\pr{C_1 K^\kappa + C_2 M^\mu} \pr{\phi\pr{\frac{R_1}{2}}-\phi(r_0)}} \|u\|_{L^2(B_{2r_0})}.
\label{mix2}
\end{equation}
By combining \eqref{mix1} and \eqref{mix2}, we arrive at
\begin{align}
\| u\|_{L^2 (B_{r_1})}
&\le C  |\log r_1| r_1^{\frac n 2}\brac{ r_0^{-\frac{n}{2}}\pr{K+|\log r_0|} \pr{1+ r_0K^{\frac{s}{s-n}}+r_0M^{\frac{t}{2t-n}}} \|u\|_{L^2(B_{2r_0})}}^{k_0} \nonumber \\
&\times \brac{R_1^{-\frac{n}{2}}\pr{K+|\log R_1|} \pr{1+R_1K^{\frac{s}{s-n}}+R_1M^{\frac{t}{2t-n}}} \|u\|_{L^2(B_{R_1})}}^{1 - k_0} \nonumber \\
&+C \pr{\frac{R_1}{r_0}}^{\frac n 2} \pr{1 +\frac{|\log r_0|}{K}}
e^{\pr{C_1 K^\kappa + C_2 M^\mu}
\pr{\phi\pr{\frac{R_1}{2}}-\phi(r_0)}} \|u\|_{L^2(B_{2r_0})}.
 \label{end2}
\end{align}
By elliptic regularity (see for example \cite{HL11}, \cite{GT01}), we see that
\begin{align}
\|u\|_{L^\infty(B_r)}
\le C\pr{1 + r K^{\frac{s}{s-n}} + r M^{\frac{t}{2t-n}}}^{\frac n 2}r^{- \frac n 2}\|u\|_{L^2(B_{2r})}.
\label{ell}
\end{align}
From \eqref{end2} and \eqref{ell}, we get the three-ball inequality in the $L^\infty$-norm that is given in \eqref{three}.
\end{proof}

The inequality \eqref{three} is the three-ball inequality we use in the proof of Theorem \ref{thh}.
We first use \eqref{three} in the propagation of smallness argument to establish a lower bound for the solution in $B_r$.
Similar arguments have been performed in \cite{Zhu16}.
Then we use \eqref{three} again to establish the order of vanishing estimate.

\begin{proof} [Proof of Theorem  \ref{thh}] Without loss of
generality, we may assume that $x_0$ is the origin.  With
$r_0=\frac{r}{2}$, $r_1=4r$ and $R_1=10r$, it follows from
\eqref{three} that
\begin{align}
\|u\|_{L^\infty \pr{B_{3r}}}
&\le C \pr{1 + K^{\frac{s}{s-n}} + M^{\frac{t}{2t-n}}}^{1+\frac n 2} \pr{K+|\log r|} |\log r|  \|u\|_{L^\iny(B_{r})}^{k_0} \|u\|_{L^\iny(B_{10r})}^{1 - k_0} \nonumber \\
&+C\pr{1 + \frac{\abs{\log r}}{K}} \exp\brac{ \pr{C_1 K^\kappa + C_2 M^\mu} \pr{\phi\pr{5r}-\phi\pr{\frac r 2}} } \|u\|_{L^\iny(B_{r})},
\label{refi}
\end{align}
where $\disp k_0 = \frac{\phi(5r)-\phi(4r)}{\phi(5r)-\phi\pr{\frac r 2}}$.
We can check that
$$c^{-1} \leq \phi(5r)-\phi\pr{\frac{r}{2}}\leq c \quad \mbox{and} \quad c^{-1} \leq \phi(5r)-\phi(4r)\leq c,$$
where $c$ is some universal constant.
Therefore, $k_0$ is independent of $r$ in this case.

We choose a small $r < \frac 1 2$ such that
$$\sup_{B_r(0)}|u|=\ell,$$
where $\ell>0$.
Since \eqref{normal} implies $\disp \sup_{|x|\leq 1}|u(x)|\geq 1$, there exists some $\bar x\in B_1$ such that $\disp \abs{u(\bar x)}=\sup_{|x|\leq 1}|u(x)|\geq 1$.
We select a sequence of balls, each with radius $r$, centered at $x_0=0, \ x_1, \ldots, x_d$ so that
$x_{i+1}\in B_{r}(x_i)$ for every $i$, and $\bar x\in B_{r}(x_d)$.
Note that the number of balls, $d$, depends on the radius $r$ which is to be fixed.
Employing the $L^\infty$-version of  three-ball inequality (\ref{refi}) at the origin and the boundedness assumption of $u$ given in \eqref{bound}, we get
\begin{align*}
\|u\|_{L^\infty \pr{B_{3r}(0)}}
&\le C \hat C^{1 - k_0} \pr{1 + K^{\frac{s}{s-n}} + M^{\frac{t}{2t-n}}}^{1+\frac n 2} \ell^{k_0} \pr{1+\frac{|\log r|}{K}} K \abs{\log r}   \nonumber \\
&+ C \ell \pr{1 + \frac{|\log r|}{K}} \exp\brac{c\pr{C_1 K^\kappa + C_2 M^\mu} } \nonumber \\
&\le C \pr{\hat C^{1 - k_0} + 1}\pr{\ell^{k_0} + \ell} \pr{K+ |\log r| } \abs{\log r}\exp\pr{C_1 K^\kappa + C_2 M^\mu},
\end{align*}
where we have used that $K, M \ge 1$ to conclude that $\pr{1 + K^{\frac{s}{s-n}} + M^{\frac{t}{2t-n}}}^{1+\frac n 2} \le \exp\pr{C_1 K^\kappa + C_2 M^\mu}$, after possibly redefining $C_1$ and $C_2$, still depending on $n$, $s$, and $t$.
Since $B_r(x_{i+1})\subset B_{3r}(x_{i})$, then for every $i = 1, 2, \ldots, d$,
\begin{equation}
\|u\|_{L^\infty (B_r(x_{i+1}))}\leq  \|u\|_{L^\infty
(B_{3r}(x_{i}))}. \label{bbb}
\end{equation}
Repeating the above argument with balls centered at $x_i$ and using \eqref{bbb}, we obtain
\begin{equation*}
\|u\|_{L^\infty (B_{3r}(x_{i}))}
\leq C_i \ell^{D_i} \brac{\pr{K+ |\log r| } \abs{\log r}}^{E_i} \exp\brac{F_i\pr{C_1 K^\kappa + C_2 M^\mu} }
\end{equation*}
for $i=0, 1, \cdots, d$, where $C_i$ depends on $n$, $d$, $\hat C$, and $C$ from Lemma \ref{threeBalls}, and $D_i$, $E_i$, $F_i$ are constants depending on $n$ and $d$.
By the fact that $\abs{u(\bar x)} \geq 1$ and $\bar x \in B_{3r}(x_d)$, we obtain
\begin{align*}
\ell &\ge C_d^{-\frac{1}{D_d}} \exp\brac{-\frac{F_d}{D_d}\pr{C_1 K^\kappa + C_2 M^\mu} } \brac{\pr{K+ |\log r| } \abs{\log r}}^{-\frac{E_d}{D_d}} \\
&= c \exp\brac{-C \pr{C_1 K^\kappa + C_2 M^\mu} }\pr{K+ |\log r|}^{-C} |\log r|^{-C}
\end{align*}
where $c$ and $C$ are new constants with $c\pr{n, s, t, d, \hat C}$ and $C\pr{n, d}$.

Now we fix the radius $r$ as a small number so that $d$ is a fixed constant.
We are going to use the three-ball inequality again with a different set of radii.
Let $\frac{3}{4}r_1=r$, $R_1=10r$ and let $r_0 << r$, i.e. $r_0$ is sufficiently small with respect to $r$.
Then, by the three-ball inequality \eqref{three},
$$ \ell \leq {\rm I} +\Pi,$$
where
\begin{align*}
{\rm I} &= C F\pr{r}^{\frac n 2}  |\log r| \brac{ (K+|\log r_0|) F\pr{r_0} \|u\|_{L^\iny(B_{2r_0})}}^{k_0} \brac{ (K+|\log 10r|) F\pr{10 r} \|u\|_{L^\iny(B_{10 r})}}^{1 - k_0} \\
\Pi &= C F\pr{r}^{\frac n 2} \pr{\frac{r }{r_0 }}^{\frac n 2} \pr{1 +\frac{|\log r_0|}{K}} e^{\pr{C_1 K^\kappa + C_2 M^\mu} \pr{\phi\pr{5r}-\phi(r_0)}} \|u\| _{L^\iny(B_{2r_0})},
\end{align*}
with $\disp k_0 = \frac{\phi(5r)-\phi(\frac 4 3 r)}{\phi(5r)-\phi(r_0)}$ and $F\pr{r} = 1 + r K^{\frac{s}{s-n}} + r M^{\frac{t}{2t-n}}$.

On one hand, if ${\rm I} \leq \Pi$, then
\begin{align*}
&c \exp\brac{-C \pr{C_1 K^\kappa + C_2 M^\mu} }\pr{1+\frac{|\log r|}{K}}^{-C} \pr{K |\log r|}^{-C}
\le \ell \le 2 \Pi \\
&\le 2 C F\pr{r}^{\frac n 2} \pr{\frac{r }{r_0 }}^{\frac n 2} \pr{1 +\frac{|\log r_0|}{K}} e^{\pr{C_1 K^\kappa + C_2 M^\mu} \pr{\phi\pr{5r}-\phi(r_0)}} \|u\|_{L^\iny(B_{2r_0})}.
\end{align*}
so that
\begin{align*}
&4 C \pr{ |\log r|^2+|\log r|}^{C} r^{\frac n 2} \|u\|_{L^\iny(B_{2r_0})} \\
&\ge c \exp\set{\brac{\phi(r_0) -\pr{\phi\pr{5r}+C }}\pr{C_1 K^\kappa + C_2 M^\mu} - \frac n 2 \log F\pr{1} - C \log K+ \frac n 2 \log r_0 -  \log\abs{\log r_0}}.
\end{align*}
Keeping in mind that $r$ is a fixed small positive constant, there exists $\hat r_0 << r$ so that for any $r_0 \le \hat r_0$,
$$\phi(r_0) -\pr{\phi\pr{5r}+C } \ge 2 \phi\pr{r_0}.$$
and
$$-\phi\pr{r_0} \pr{C_1 K^\kappa + C_2 M^\mu} \ge \frac n 2 \log F\pr{1} + C \log K + \frac n 2 \abs{\log r_0} + \log\abs{\log r_0},$$
where we are using that $K, M \ge 1$ and possibly adjusting the definitions of $C_1$ and $C_2$.
It follows that
\begin{align*}
&\brac{\phi(r_0) -\pr{\phi\pr{5r}+C }}\pr{C_1 K^\kappa + C_2 M^\mu} - \frac n 2 \log F\pr{1} - C \log K+ \frac n 2 \log r_0 -  \log\abs{\log r_0} \\
&\ge 3 \phi\pr{r_0}\pr{C_1 K^\kappa + C_2 M^\mu}
\end{align*}
and therefore
\begin{align*}
\|u\|_{L^\iny(B_{2r_0})}
&\ge c r_0^{ \pr{C_1 K^\kappa + C_2 M^\mu} },
\end{align*}
where now $c\pr{n, s, t, \hat C}$.
On the other hand, if $\Pi \leq {\rm I}$, then
\begin{align*}
&c \exp\brac{-C \pr{C_1 K^\kappa + C_2 M^\mu} }\pr{1+\frac{|\log r|}{K}}^{-C} |\log r|^{-C}
\le \ell \le 2 {\rm I} \\
&\le 2 C F\pr{r}^{\frac n 2} |\log r|
\brac{(K+|\log r_0|) F\pr{r_0} \|u\|_{L^\iny(B_{2r_0})}}^{k_0}
\brac{(K+|\log 10r|)  F\pr{10 r} \|u\|_{L^\iny(B_{10 r})}}^{1 - k_0} \\
&\le 2 C \hat C K F\pr{1}^{1 + \frac n 2} (|\log r|^2+|\log r|)
\brac{\frac{(1+|\log r_0|) \|u\|_{L^\iny(B_{2r_0})}}{(1+|\log r|) \hat C}}^{k_0} ,
\end{align*}
where we have used that $\|u\|_{L^\infty\pr{B_{10r}}}\leq \hat{C}$ from \eqref{bound}.
Then, raising both sides to $\frac{1}{k_0}$, we obtain
\begin{align*}
\|u\|_{L^\iny(B_{2r_0})} &\ge\frac{\hat C |\log r| }{2}
\exp\Big[-\frac{1}{k_0} \pr{CC_1 K^\kappa + CC_2 M^\mu  + \log K +
\frac{n+2}{2 } \log F\pr{1} + \log A} \\ & - \log |\log r_0| \Big],
\end{align*}
where $A := \frac{2 C \hat C (|\log r|^2+|\log r|)^{1+C} }{c} \ge 1$.
Recalling that $\disp -\frac{1}{k_0} = \frac{\phi(r_0) - \phi(5r)}{\phi(5r)-\phi(\frac 4 3 r)} \ge \frac{2\phi(r_0) + C}{\phi(5r)-\phi(\frac 4 3 r)}$ by our previous assumption that $r_0 \le \hat r_0$,  we see that $\disp -\frac{1}{k_0} \sim \phi\pr{r_0} = \log r_0 + \log\pr{\log r_0}^2$.
After possibly redefining $C_1$ and $C_2$ again, we have
\begin{align*}
 \|u\|_{L^\iny(B_{2r_0})}
 &\ge c r_0^{\pr{C_1 K^\kappa + C_2 M^\mu}},
\end{align*}
as before.
This completes the proof of Theorem \ref{thh}.
\end{proof}

To prove Theorem \ref{thhh}, we require another three-ball inequality.

\begin{lemma}
Let $0 < r_0< r_1< R_1 < R_0$, where $R_0 < 1$ is sufficiently small.
Let $t \in \pb{ \frac{4n^2}{7n+2}, \iny}$.
Assume that for some $M \ge 1$, $\|V\|_{L^t\pr{B_{R_0}}} \le M$.
Let $u : B_{R_{0}} \to \C$ be a solution to \eqref{goal1} in $B_{R_0}$.
Then there exists a constant $C$, depending on $n$ and $t$, such that
\begin{align}
\|u\|_{L^\infty \pr{B_{3r_1/4}}}
&\le C |\log r_1| F\pr{r_1}^{\frac n 2} \pr{|\log r_0| F\pr{r_0} \|u\|_{L^\iny(B_{2r_0})}}^{k_0} \pr{ |\log R_1|F\pr{R_1} \|u\|_{L^\iny(B_{R_1})}}^{1 - k_0} \nonumber  \\
&+C F\pr{r_1}^{\frac n 2}  \pr{\frac{R_1}{r_1}}^{\frac n
2}\frac{\abs{\log r_0}}{\abs{\log R_1}} e^{C_2 M^\mu \pr{\phi\pr{\frac{R_1}{2}}-\phi(r_0)}} \|u\|_{L^\iny(B_{2r_0})},
\label{three2}
\end{align}
where $\disp k_0 = \frac{\phi(\frac{R_1}{2})-\phi(r_1)}{\phi(\frac{R_1}{2})-\phi(r_0)}$, $F\pr{r} = 1 + r M^{\frac{t}{2t-n}}$, and $\mu$ and $C_2$ are as in Theorem \ref{CarlpqV}.
\end{lemma}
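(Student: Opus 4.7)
The plan is to mimic the argument used for Lemma \ref{threeBalls}, substituting the simpler Carleman estimate of Theorem \ref{CarlpqV} for the one in Theorem \ref{CarlpqVW}, and tracking how the disappearance of the $W$-term cleans up the right-hand side. Let $\eta \in C^\infty_0\pr{B_{R_0}}$ be the same cutoff function as before, with $\eta \equiv 1$ on the annulus $D_1=\brac{\frac32 r_0,\frac12 R_1}$, $\eta \equiv 0$ off $\brac{r_0,\frac34 R_1}$, and the standard bounds $|\gr\eta|\lesssim r_0^{-1}$, $|\gr^2\eta|\lesssim r_0^{-2}$ on $D_2=\brac{r_0,\frac32 r_0}$ and $|\gr\eta|\lesssim R_1^{-1}$, $|\gr^2\eta|\lesssim R_1^{-2}$ on $D_3=\brac{\frac12 R_1,\frac34 R_1}$. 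Since $u$ solves $\LP u+Vu=0$, we compute $\LP(\eta u)+V(\eta u)=\LP\eta\, u+2\gr\eta\cdot\gr u$, which is supported in $D_2\cup D_3$.

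Next, I would apply Theorem \ref{CarlpqV} to $\eta u$ with the exponent $p$ specified there and $\tau\ge C_2 M^{\mu}$, obtaining
\begin{equation*}
\tau^{\be_0}\|(\log r)^{-1}e^{-\tau\phi(r)}\eta u\|_{L^2(r^{-n}dx)}
\le C_0\|(\log r)e^{-\tau\phi(r)}r^2\pr{\LP\eta\, u+2\gr\eta\cdot\gr u}\|_{L^p(r^{-n}dx,D_2\cup D_3)}.
\end{equation*}
By Hölder's inequality (exactly as in the derivation of \eqref{hod2}, but with no $W$ present), each term on the right localizes to the two annuli $D_2$ and $D_3$ and is controlled by $|\log r_0|r_0^{-n/2}e^{-\tau\phi(r_0)}\pr{\|u\|_{L^2(D_2)}+r_0\|\gr u\|_{L^2(D_2)}}$ and the analogous expression on $D_3$ with $R_1$ in place of $r_0$. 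Applying the Caccioppoli inequality of Lemma \ref{CaccLem} with $K=0$, the gradient norms are bounded by $F(r_0)^{1}\|u\|_{L^2(B_{2r_0})}$ and $F(R_1)\|u\|_{L^2(B_{R_1})}$, where $F(r)=1+rM^{t/(2t-n)}$ as in the statement.

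Now I would restrict the left-hand side to the sub-annulus $D_4=D_1\cap\{r\le r_1\}$, bound $e^{\tau\phi(r)}(\log r)r^{n/2}$ above by its value at $r_1$, and absorb $\|u\|_{L^2(B_{3r_0/2})}$ into the left (which is possible because the prefactor $|\log r_0|F(r_0)$ beats the elementary $L^2$ bound on the ball of radius $\frac32 r_0$). The resulting inequality has the schematic shape $\|u\|_{L^2(B_{r_1})}\le A_1 e^{\tau(\phi(r_1)-\phi(r_0))}U_1+A_2 e^{\tau(\phi(r_1)-\phi(R_1/2))}U_2$ with $U_1,U_2$ the $L^2$ norms on $B_{2r_0}$ and $B_{R_1}$ and $A_i$ the obvious prefactors. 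Then I would optimize by choosing $\tau_1=\tfrac{k_0}{\phi(R_1/2)-\phi(r_1)}\log\pr{A_2 U_2/A_1 U_1}$: if $\tau_1\ge C_2 M^{\mu}$ the optimization yields the first term in \eqref{three2}, and if $\tau_1<C_2 M^{\mu}$ we get the second term directly by comparing $U_2$ and $U_1$. Finally, elliptic regularity (as in \eqref{ell}, with the drift absent) converts the $L^2$ bounds on $B_{r_1}$ into $L^\infty$ bounds on $B_{3r_1/4}$, producing the $F(r_1)^{n/2}$ prefactor.

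The main obstacle is verifying that the Hölder step works with the exponent $p$ of Theorem \ref{CarlpqV} in the delicate range $\frac{4n^2}{7n+2}<t\le n$, where $p=\frac{2n}{n+2-2n\eps/((n+2)t)}$ is only slightly larger than the Sobolev endpoint $\frac{2n}{n+2}$. One must check that $1+\frac n2-\frac np>0$ so that powers of $r$ coming from the measure $r^{-n}dx$ are integrable on $D_2\cup D_3$; this is guaranteed by the strict inequality $p>\frac{2n}{n+2}$, but the constants depend on $\eps$ and propagate into $C$. Everything else is bookkeeping: the bounds involving $K$ in Lemma \ref{threeBalls} are simply dropped, and the $\pr{K+|\log r|}$ factors become the $|\log r|$ factors appearing in \eqref{three2}.
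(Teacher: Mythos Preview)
Your proposal is correct and follows essentially the same approach as the paper's own proof: the same cutoff $\eta$, application of Theorem \ref{CarlpqV} to $\eta u$, H\"older and Caccioppoli (with $K=0$) on the annuli $D_2,D_3$, restriction to $D_4$, the two-case optimization in $\tau$, and the final $L^2\to L^\infty$ upgrade via elliptic regularity. Your remark that the $(K+|\log r|)$ factors collapse to $|\log r|$ and your check that $p>\frac{2n}{n+2}$ guarantees $1+\frac n2-\frac np>0$ are exactly the points where this proof differs from that of Lemma \ref{threeBalls}.
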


\begin{proof}
Let $r_0< r_1< R_1$.
Choose a smooth function $\eta\in C^\infty_{0}(B_{R_0})$ with $B_{2R_1}\subset B_{R_0}$.
As before, let
$$D_1=\brac{\frac{3}{2}r_0, \frac{1}{2}R_1 }, \quad  \quad
D_2= \brac{r_0, \frac{3}{2}r_0}, \quad \quad
D_3=\brac{\frac{1}{2}R_1, \frac{3 }{4}R_1}.$$
Let $\eta=1$ on $D_1$ and $\eta=0$ on $[0, \ r_0]\cup \brac{\frac{3}{4}R_1, \ R_1}$.
Then $|\nabla \eta|\leq \frac{C}{r_0}$ and $|\nabla^2\eta|\leq \frac{C}{r_0^2}$ on $D_2$, and $|\nabla \eta|\leq \frac{C}{R_1}$ and $|\nabla^2 \eta|\leq\frac{C}{R_1^2}$ on $D_3$.

Since $u$ is a solution to \eqref{goal1} in $B_{R_0}$, then $u \in L^\iny\pr{B_{R_1}} \cap W^{1,2}\pr{B_{R_1}} \cap W^{2,p}\pr{B_{R_1}}$, so by regularization, the estimate in Theorem \ref{CarlpqV} holds for $\eta u$.
Using that $u$ is a solution to equation \eqref{goal1}, we get
\begin{align*}
\tau^{\be_0} \|(\log r)^{-1} e^{-\tau \phi(r)} \eta u\|_{L^2(r^{-n}dx)}
&\leq  C_0 \|(\log r ) e^{-\tau \phi(r)} r^2\pr{ \LP \eta \, u + 2 \gr \eta \cdot \gr u  }\|_{L^p(r^{-n} dx)},
\end{align*}
whenever
$$\tau \ge C_2 M^{\mu}.$$

Then
\begin{equation*}
\tau^{\beta_0} \|(\log r)^{-1} e^{-\tau \phi(r)} u\|_{L^2(D_1, r^{-n}dx )}\leq J,
\end{equation*}
where
$$ J= C_0 \|(\log r) e^{-\tau \phi(r)} r^2 (\LP \eta \, u + 2\nabla \eta \cdot \nabla u)\|_{L^p(D_2\cup D_3, r^{-n} dx)}.$$
Set $D_4=\{r\in D_1, \ r\leq r_1\}$.
From the previous inequality and that $\tau \ge 1$ and $\be_0 > 0$, we have, as before, that
\begin{align}
\| u\|_{L^2 (D_4)}
&\le e^{\tau \phi(r_1)} |\log r_1| r_1^{\frac{n}{2}} J.
\label{JBound}
\end{align}

As in the previous proof,
\begin{eqnarray*}
J &\leq & C |\log r_0| r_0^{-\frac{n}{2}} e^{-\tau \phi(r_0)}  \pr{\| u\|_{{L^2(D_2)}}+ r_0
\|\nabla u\|_{{L^2(D_2)}}} \nonumber \medskip \\
&+& C|\log R_1| R_1^{-\frac{n}{2}}e^{-\tau \phi\pr{\frac{R_1}{2}}} \pr{\|
u\|_{{L^2(D_3)}}+ R_1 \| \nabla u\|_{{L^2(D_3)}}}.
\end{eqnarray*}
By the Caccioppoli inequality in Lemma \ref{CaccLem},
\begin{equation*}
\|\nabla u\|_{L^2(D_2)}\leq
C\pr{\frac{1}{r_0}+M^{\frac{t}{2t-n}}}\|
u\|_{L^2\pr{B_{2r_0}\backslash B_{{r_0}/{2}}}}
\end{equation*}
and
\begin{equation*}
\|\nabla u\|_{L^2(D_3)}\leq
C\pr{\frac{1}{R_1}+M^{\frac{t}{2t-n}}}\|
u\|_{L^2\pr{B_{R_1}\backslash B_{{R_1}/{4}}}}.
\end{equation*}
Therefore,
\begin{eqnarray*}
J
&\leq& C |\log r_0| r_0^{-\frac{n}{2}}e^{-\tau \phi(r_0)}\pr{1+r_0M^{\frac{t}{2t-n}}}\|u\|_{L^2(B_{2r_0})}  \nonumber \medskip \\
&+& C |\log R_1| {R_1}^{-\frac{n}{2}}e^{-\tau \phi\pr{\frac{R_1}{2}}}\pr{1+R_1M^{\frac{t}{2t-n}}}
\|u\|_{L^2(B_{R_1})}.
\end{eqnarray*}
Adding $\|u\|_{L^2 \pr{B_{3r_0/2}}}$ to both sides of \eqref{JBound}, we get
\begin{align*}
\| u\|_{L^2 (B_{r_1})}
&\le C |\log r_1|  |\log r_0| \pr{\frac{r_1}{r_0}}^{\frac{n}{2}}e^{\tau \brac{\phi(r_1)- \phi(r_0)}}\pr{1+ r_0M^{\frac{t}{2t-n}}}\|u\|_{L^2(B_{2r_0})} \\
&+ C |\log r_1| |\log R_1| \pr{\frac{r_1}{R_1}}^{\frac{n}{2}}e^{\tau\brac{\phi(r_1) -
\phi\pr{\frac{R_1}{2}}}}\pr{1+R_1M^{\frac{t}{2t-n}}} \|u\|_{L^2(B_{R_1})}.
\end{align*}
Let $U_1 =\|u\|_{L^2(B_{2r_0})}$, $U_2=\|u\|_{L^2(B_{R_1})}$, and this time define
\begin{align*}
A_1 &= C |\log r_1|  |\log r_0| \pr{\frac{r_1}{r_0}}^{\frac{n}{2}} \pr{1+r_0M^{\frac{t}{2t-n}}} \\
A_2 &= C |\log r_1| |\log R_1| \pr{\frac{r_1}{R_1}}^{\frac{n}{2}} \pr{1+R_1M^{\frac{t}{2t-n}}}.
\end{align*}
Then the previous inequality simplifies to \eqref{D4est}.
With $k_0$ as before, i.e. $k_0=\frac{\phi(\frac{R_1}{2})-\phi(r_1)}{\phi(\frac{R_1}{2})-\phi(r_0)}$,
let
$$\tau_1 =\frac{k_0}{\phi\pr{\frac{R_1}{2}}-\phi(r_1)}\log\pr{\frac{A_2{U}_2}{A_1 {U}_1}}.$$
If $\tau_1 \ge C_2 M^\mu$, then the above calculations are valid with $\tau = \tau_1$ and by substituting $\tau_1$ into \eqref{D4est}, we get \eqref{mix1}.
On the other hand, if $\tau_1 < 1 + C_2 M^\mu$, then
\begin{align*}
U_2
< \frac{A_1}{A_2} \exp\brac{ C_2 M^\mu \pr{\phi\pr{\frac{R_1}{2}}-\phi(r_0)}} U_1.
\end{align*}
The last inequality implies that
\begin{equation}
\|u\|_{L^2 (B_{r_1})}
\le C \pr{\frac{R_1}{r_0}}^{\frac n 2}\frac{\abs{\log r_0}}{\abs{\log R_1}} e^{ C_2 M^\mu  \pr{\phi\pr{\frac{R_1}{2}}-\phi(r_0)}} \|u\|_{L^2(B_{2r_0})}.
\label{mix22}
\end{equation}
By combining \eqref{mix1} and \eqref{mix22}, we arrive at
\begin{align}
\| u\|_{L^2 (B_{r_1})}
&\le C r_1^{\frac n 2} |\log r_1|  \brac{ r_0^{- \frac n 2} |\log r_0| \pr{1+r_0M^{\frac{t}{2t-n}}} \|u\|_{L^2(B_{2r_0})}}^{k_0} \nonumber \\
&\times\brac{ R_1^{- \frac n 2}|\log R_1| \pr{1+R_1M^{\frac{t}{2t-n}}} \|u\|_{L^2(B_{R_1})}}^{1 - k_0} \nonumber  \\
&+C \pr{\frac{R_1}{r_0}}^{\frac n 2}\frac{\abs{\log r_0}}{\abs{\log R_1}} e^{ C_2 M^\mu \pr{\phi\pr{\frac{R_1}{2}}-\phi(r_0)}} \|u\|_{L^2(B_{2r_0})}.
 \label{end22}
\end{align}
By elliptic regularity
\begin{align}
\|u\|_{L^\infty(B_r)}
\le C\pr{1 + r M^{\frac{t}{2t-n}}}^{\frac n 2}r^{- \frac n 2}\|u\|_{L^2(B_{2r})}.
\label{ell2}
\end{align}
From \eqref{end22} and \eqref{ell2}, we get the three-ball inequality in the $L^\infty$-norm that is given in \eqref{three2}.
\end{proof}

The proof of Theorem \ref{thhh} follows the arguments in the proof of Theorem \ref{thh}, except that we use \eqref{three2} in place of \eqref{three}.

\section{Unique continuation at infinity}
\label{QuantUC}

Using the scaling arguments established in \cite{BK05}, we show how the unique continuation estimates at infinity follow from the maximal order of vanishing estimates.

\begin{proof}[Proof of Theorem \ref{UCVW}]
Let $u$ be a solution to \eqref{goal} in $\R^n$.
Fix $x_0 \in \R^n$ and set $\abs{x_0} = R$.
Let $u_R(x) = u(x_0 + Rx)$.
Define $W_R\pr{x} = R \, W\pr{x_0 + R x}$ and $V_R\pr{x} = R^2 V\pr{x_0 + R x}$.
For any $r > 0$,
\begin{align*}
\norm{W_R}_{L^s\pr{B_r\pr{0}}}
&= \pr{\int_{B_r\pr{0}} \abs{W_R\pr{x}}^s dx}^{\frac 1 s}
= \pr{\int_{B_r\pr{0}} \abs{R \, W\pr{x_0 + R x}}^s dx}^{\frac 1 s} \\
&= R^{1 - \frac n s } \pr{  \int_{B_r\pr{0}} \abs{W\pr{x_0 + R x}}^s d\pr{Rx} }^{\frac 1 s}
= R^{1 - \frac n s } \norm{W}_{L^s\pr{B_{r R}\pr{x_0}}}.
\end{align*}
and
\begin{align*}
\norm{V_R}_{L^t\pr{B_r\pr{0}}}
&= R^{2 - \frac n t } \pr{  \int_{B_r\pr{0}} \abs{V\pr{x_0 + R x}}^t d\pr{Rx} }^{\frac 1 t}
= R^{2 - \frac n t } \norm{V}_{L^t\pr{B_{r R}\pr{x_0}}}.
\end{align*}
Therefore, $\disp \norm{W_R}_{L^s\pr{B_{10}\pr{0}}} = R^{1 - \frac n s} \norm{W}_{L^s\pr{B_{10R}\pr{x_0}}} \le A_1 R^{1 - \frac n s}$ and $\disp \norm{V_R}_{L^t\pr{B_{10}\pr{0}}} \le A_0 R^{2 - \frac n t}$.
Moreover,
\begin{align*}
& \LP u_R\pr{x} + W_R\pr{x} \cdot \gr u_R\pr{x}  + V_R\pr{x} u_R\pr{x} \\
&= R^2 \LP u\pr{x_0 + R x} + R^2 W\pr{x_0 + Rx} \cdot \gr u\pr{x_0 + R x}  + R^2 V\pr{x_0 + R x}u\pr{x_0 + R x}
= 0.
\end{align*}
Therefore, $u_R$ is a solution to a scaled version of \eqref{goal} in $B_{10}$.
Clearly,
\begin{align*}
\norm{u_R}_{L^\iny\pr{B_{10}}}
&= \norm{u}_{L^\iny\pr{B_{10R}\pr{x_0}}} \le C_0.
\end{align*}
Note that for $\disp\widetilde{x_0} := -x_0/R$, $\disp| \widetilde{x_0}| = 1$ and $\abs{u_R(\widetilde{x_0})} = \abs{u(0)} \ge 1$ so that $\disp\norm{u_R}_{L^\iny(B_1)} \ge 1$.
Thus, if $R$ is sufficiently large, then we may apply Theorem \ref{thh} to $u_R$ with $K = A_1 R^{1 - \frac n s}$, $M = A_2 R^{2 - \frac n t}$, and $\hat C = C_0$ to get
\begin{align*}
\norm{u}_{L^\iny\pr{{B_{1}(x_0)}}} = & \norm{u_R}_{L^\iny\pr{B_{1/R}(0)}}  \\
\ge & c(1/R)^{^{\brac{C_1 \pr{A_1 R^{1 - \frac n s}}^\kappa + C_2 \pr{A_2 R^{2 - \frac n t} }^\mu}}} \\
= & \exp\set{-\brac{C_1 \pr{A_1 R^{1 - \frac n s}}^\kappa + C_2 \pr{A_2 R^{2 - \frac n t} }^\mu} \log R - \log c}.
\end{align*}
Since
\begin{align*}
\max\set{\kappa\pr{1 - \frac n s}, \mu\pr{2 - \frac n t} }= \Pi := \left\{\begin{array}{ll}
\frac{4\pr{s-n}}{2s - \pr{3n-2}} & t > \frac{sn}{s+n}, \medskip \\
\frac{4 \pr{t-n\frac t s}}{\pr{5 - \frac 2 n}s - \pr{3n-2} \frac s
t} & n\pr{\frac{3n-2}{5n-2}} < t \le \frac{sn}{s+n},
\end{array}\right.
\end{align*}
then
\begin{align*}
\norm{u}_{L^\iny\pr{{B_{1}(x_0)}}}
\ge & \exp\brac{-\pr{C_1 A_1^\kappa + C_2 A_2^\mu + \log c} R^\Pi \log R}
\end{align*}
and the conclusion of the theorem follows.
\end{proof}

Corollary \ref{UCW} follows from Theorem \ref{UCVW} with $V \equiv 0$.
Finally, we present the proof of Theorem \ref{UCV}.

\begin{proof}[Proof of Theorem \ref{UCV}]
Let $u$ be a solution to \eqref{goal1} in $\R^n$.
Fix $x_0 \in \R^n$ and set $\abs{x_0} = R$.
As above, let $u_R(x) = u(x_0 + Rx)$ and $V_R\pr{x} = R^2 V\pr{x_0 + R x}$.
Then, $\disp \norm{V_R}_{L^t\pr{B_{10}\pr{0}}} \le A_0 R^{2 - \frac n t}$ and
\begin{align*}
 \LP u_R\pr{x} + V_R\pr{x} u_R\pr{x}
&= R^2 \LP u\pr{x_0 + R x} + R^2 V\pr{x_0 + R x}u\pr{x_0 + R x}
= 0.
\end{align*}
Therefore, $u_R$ is a solution to a scaled version of \eqref{goal1} in $B_{10}$.
Clearly,
\begin{align*}
\norm{u_R}_{L^\iny\pr{B_{10}}}
&= \norm{u}_{L^\iny\pr{B_{10R}\pr{x_0}}} \le C_0.
\end{align*}
Note that for $\disp\widetilde{x_0} := -x_0/R$, $\disp| \widetilde{x_0}| = 1$ and $\abs{u_R(\widetilde{x_0})} = \abs{u(0)} \ge 1$ so that $\disp\norm{u_R}_{L^\iny(B_1)} \ge 1$.
Thus, if $R$ is sufficiently large, then we may apply Theorem \ref{thhh} to $u_R$ with $M = A_2 R^{2 - \frac n t}$, and $\hat C = C_0$ to get
\begin{align*}
\norm{u}_{L^\iny\pr{{B_{1}(x_0)}}}
&= \norm{u_R}_{L^\iny\pr{B_{1/R}(0)}}
\ge c(1/R)^{^{C_2 \pr{A_2 R^{2 - \frac n t} }^\mu}} \\
&= \exp\set{-\brac{C_2 \pr{A_2 R^{2 - \frac n t} }^\mu} \log R - \log c}.
\end{align*}
Since
\begin{align*}
\mu\pr{2 - \frac n t} = \Pi :=  \left\{\begin{array}{ll}
\frac{4\pr{2t-n}}{6t - \pr{3n-2}} & t > n, \medskip \\
\frac{4\pr{2t-n}} {7t+\frac{2t}n-4n-\eps} & \frac{4n^2}{7n+2} < t
\le n,
\end{array}\right.
\end{align*}
then
\begin{align*}
\norm{u}_{L^\iny\pr{{B_{1}(x_0)}}}
\ge & \exp\brac{-\pr{C_2 A_2^\mu - \log c} R^\Pi \log R},
\end{align*}
as required.
\end{proof}

\noindent
{\small {\bf Conflict of Interest}
 The authors declares that they have no conflict of interest.}


\def\cprime{$'$}

\end{document}